\DeclareMathAlphabet{\mathpzc}{OT1}{pzc}{m}{it}
\begin{document}

\def\subsectionautorefname{Section}
\def\subsubsectionautorefname{Section}
\def\sectionautorefname{Section}
\def\equationautorefname~#1\null{(#1)\null}

\newcommand{\mynewtheorem}[4]{
  % #1=env name
  % #2=displayed name
  % #3=counter to share
  % #4=counter to obey
  \if\relax\detokenize{#3}\relax %#3 empty
    \if\relax\detokenize{#4}\relax %#4 empty
      \newtheorem{#1}{#2}
    \else
      \newtheorem{#1}{#2}[#4]
    \fi
  \else
    \newaliascnt{#1}{#3}
    \newtheorem{#1}[#1]{#2}
    \aliascntresetthe{#1}
  \fi
  \expandafter\def\csname #1autorefname\endcsname{#2}
}

%%%%%%%%%%%%%%%%%%%%%%%%%%%%
%%%% statements: %%%%%%%%%%%%%%%%%
%%%%%%%%%%%%%%%%%%%%%%%%%%%%
\mynewtheorem{theorem}{Theorem}{}{section}
%\mynewtheorem{thm}{Theorem}{}{section}
\mynewtheorem{lemma}{Lemma}{theorem}{}
\mynewtheorem{lem}{Lemma}{theorem}{}
\mynewtheorem{rem}{Remark}{lemma}{}
\mynewtheorem{prop}{Proposition}{lemma}{}
\mynewtheorem{cor}{Corollary}{lemma}{}
\mynewtheorem{definition}{Definition}{lemma}{}
\mynewtheorem{question}{Question}{lemma}{}
\mynewtheorem{assumption}{Assumption}{lemma}{}
\mynewtheorem{example}{Example}{lemma}{}
\mynewtheorem{exm}{Example}{lemma}{}
\mynewtheorem{rmk}{Remark}{lemma}{}
\mynewtheorem{pb}{Problem}{lemma}{}
\newtheorem*{rmk*}{Remark}
\newtheorem*{pb*}{Problem}

%%%%%%%%%%%%%%%%%%%%%%%%%%%%
%%%%%%%%%%%%%%%%%%%%%%%%%%%%

\def\defbb#1{\expandafter\def\csname b#1\endcsname{\mathbb{#1}}}
\def\defcal#1{\expandafter\def\csname c#1\endcsname{\mathcal{#1}}}
\def\deffrak#1{\expandafter\def\csname frak#1\endcsname{\mathfrak{#1}}}
\def\defop#1{\expandafter\def\csname#1\endcsname{\operatorname{#1}}}
\def\defbf#1{\expandafter\def\csname b#1\endcsname{\mathbf{#1}}}

\makeatletter
\def\defcals#1{\@defcals#1\@nil}
\def\@defcals#1{\ifx#1\@nil\else\defcal{#1}\expandafter\@defcals\fi}
\def\deffraks#1{\@deffraks#1\@nil}
\def\@deffraks#1{\ifx#1\@nil\else\deffrak{#1}\expandafter\@deffraks\fi}
\def\defbbs#1{\@defbbs#1\@nil}
\def\@defbbs#1{\ifx#1\@nil\else\defbb{#1}\expandafter\@defbbs\fi}
\def\defbfs#1{\@defbfs#1\@nil}
\def\@defbfs#1{\ifx#1\@nil\else\defbf{#1}\expandafter\@defbfs\fi}
\def\defops#1{\@defops#1,\@nil}
\def\@defops#1,#2\@nil{\if\relax#1\relax\else\defop{#1}\fi\if\relax#2\relax\else\expandafter\@defops#2\@nil\fi}
\makeatother

%%%%%%%%%%%%%%%%%%%%%%%%%%%%
%%%% Benutzerteil: %%%%%%%%%%%%%%%%%
%%%%%%%%%%%%%%%%%%%%%%%%%%%%
\defbbs{ZHQCNPALRVWS}
\defcals{ABCDOGFPQMNXYLTRAEHZKCIV}
\deffraks{apijklgmnopqueRCc}
\defops{IVC, PGL,SL,mod,Spec,Re,Gal,Tr,End,GL,Hom,PSL,H,div,rk,Mod,R,T,Tr,Mat,Vol,MV,Res,Hur, vol,Z,diag,Hyp,hyp,hl,ord,Im,ev,U,dev,c,CH,fin,pr,Pic,lcm,ch,td,LG,id,Sym,Aut,Log,tw,irr,discrep,BN,NF,NC,age,hor,lev,ram,NH,av,app,Quad,Stab,Per,Nil,Ker,EG,CEG,PB,Conf,MCG}
\defbfs{uvzwp} % only use this for small letters, otherwise you go directly to hell!!!
%%%%%%%%%%%%%%%%%%%%%%%%%%%%
%%%%%%%%%%%%%%%%%%%%%%%%%%%%

\def\ep{\varepsilon}
\def\ve{\varepsilon}
\def\abs#1{\lvert#1\rvert}
\def\dd{\mathrm{d}}
\def\WP{\mathrm{WP}}
\def\Aut{\mathrm{Aut}}
\def\pzAut{\mathpzc{Aut}}
\def\inj{\hookrightarrow}
\def\eq{=}

\def\i{\mathrm{i}}
\def\e{\mathrm{e}}
\def\st{\mathrm{st}}
\def\ct{\mathrm{ct}}
\def\rel{\mathrm{rel}}
\def\odd{\mathrm{odd}}
\def\even{\mathrm{even}}

\def\uC{\underline{\bC}}
  
\def\Vrel{\bV^{\mathrm{rel}}}
\def\Wrel{\bW^{\mathrm{rel}}}
\def\twolev{\mathrm{LG_1(B)}}

%%%%%%%%%%%%%%
% equations
%%%%%%%%%%%%%%
\def\be{\begin{equation}}   \def\ee{\end{equation}}     
\def\bes{\begin{equation*}}    \def\ees{\end{equation*}}
\def\ba{\be\begin{aligned}} \def\ea{\end{aligned}\ee}   
\def\bas{\bes\begin{aligned}}  \def\eas{\end{aligned}\ees}
\def\={\;=\;}  \def\+{\,+\,} \def\m{\,-\,}

%%%%%%%%%%%%%%%%%%%%%%%
%%% moduli spaces %%%%
%%%%%%%%%%%%%%%%%%%%%%%

\newcommand*{\proj}{\mathbb{P}}
\newcommand{\IVCst}[1][\mu]{{\mathcal{IVC}}({#1})}
\newcommand{\barmoduli}[1][g]{{\overline{\mathcal M}}_{#1}}
\newcommand{\moduli}[1][g]{{\mathcal M}_{#1}}
\newcommand{\omoduli}[1][g]{{\Omega\mathcal M}_{#1}}
\DeclareDocumentCommand{\qmoduli}{ O{g} O{}}{{\mathrm{Quad}^{#2}_{#1}}}

\newcommand{\modulin}[1][g,n]{{\mathcal M}_{#1}}
\newcommand{\omodulin}[1][g,n]{{\Omega\mathcal M}_{#1}}
\newcommand{\zomoduli}[1][]{{\mathcal H}_{#1}}
\newcommand{\barzomoduli}[1][]{{\overline{\mathcal H}_{#1}}}
\newcommand{\pomoduli}[1][g]{{\proj\Omega\mathcal M}_{#1}}
\newcommand{\pomodulin}[1][g,n]{{\proj\Omega\mathcal M}_{#1}}
\newcommand{\pobarmoduli}[1][g]{{\proj\Omega\overline{\mathcal M}}_{#1}}
\newcommand{\pobarmodulin}[1][g,n]{{\proj\Omega\overline{\mathcal M}}_{#1}}
\newcommand{\potmoduli}[1][g]{\proj\Omega\tilde{\mathcal{M}}_{#1}}
\newcommand{\obarmoduli}[1][g]{{\Omega\overline{\mathcal M}}_{#1}}
\newcommand{\qbarmoduli}[1][g]{{\Omega^2\overline{\mathcal M}}_{#1}}
\newcommand{\obarmodulio}[1][g]{{\Omega\overline{\mathcal M}}_{#1}^{0}}
\newcommand{\otmoduli}[1][g]{\Omega\tilde{\mathcal{M}}_{#1}}
\newcommand{\pom}[1][g]{\proj\Omega{\mathcal M}_{#1}}
\newcommand{\pobarm}[1][g]{\proj\Omega\overline{\mathcal M}_{#1}}
\newcommand{\pobarmn}[1][g,n]{\proj\Omega\overline{\mathcal M}_{#1}}
\newcommand{\princbound}{\partial\mathcal{H}}
\newcommand{\omoduliinc}[2][g,n]{{\Omega\mathcal M}_{#1}^{{\rm inc}}(#2)}
\newcommand{\obarmoduliinc}[2][g,n]{{\Omega\overline{\mathcal M}}_{#1}^{{\rm inc}}(#2)}
\newcommand{\pobarmoduliinc}[2][g,n]{{\proj\Omega\overline{\mathcal M}}_{#1}^{{\rm inc}}(#2)}
\newcommand{\otildemoduliinc}[2][g,n]{{\Omega\widetilde{\mathcal M}}_{#1}^{{\rm inc}}(#2)}
\newcommand{\potildemoduliinc}[2][g,n]{{\proj\Omega\widetilde{\mathcal M}}_{#1}^{{\rm inc}}(#2)}
\newcommand{\omoduliincp}[2][g,\lbrace n \rbrace]{{\Omega\mathcal M}_{#1}^{{\rm inc}}(#2)}
\newcommand{\obarmoduliincp}[2][g,\lbrace n \rbrace]{{\Omega\overline{\mathcal M}}_{#1}^{{\rm inc}}(#2)}
\newcommand{\obarmodulin}[1][g,n]{{\Omega\overline{\mathcal M}}_{#1}}
\newcommand{\LTH}[1][g,n]{{K \overline{\mathcal M}}_{#1}}
\newcommand{\PLS}[1][g,n]{{\bP\Xi \mathcal M}_{#1}}

\DeclareDocumentCommand{\LMS}{ O{\mu} O{g,n} O{}}{\Xi\overline{\mathcal{M}}^{#3}_{#2}(#1)}
\DeclareDocumentCommand{\Romod}{ O{\mu} O{g,n} O{}}{\Omega\mathcal{M}^{#3}_{#2}(#1)}

\newcommand*{\Tw}[1][\Lambda]{\mathrm{Tw}_{#1}}  %twist group
\newcommand*{\sTw}[1][\Lambda]{\mathrm{Tw}_{#1}^s}  %simple twist group

\newcommand{\HH}{{\mathbb H}}
\newcommand{\MM}{{\mathbb M}}
\newcommand{\bbC}{{\mathbb C}}

%%%%%%%%%%%%%%%%%%%%%%%
% Number sets
%%%%%%%%%%%%%%%%%%%%%%%
\newcommand\PP{\mathbb P}
\newcommand\C{\mathbb C}
\renewcommand\R{\mathbb R}
\renewcommand\Z{\mathbb Z}
\newcommand\N{\mathbb N}
\newcommand\Q{\mathbb Q}
\renewcommand{\H}{\mathbb{H}}
%field
\newcommand{\bk}{k}%\mathbb{K}}

%%%%%%%%%%%%%%%%%%%%%%%
% Boldface letters
%%%%%%%%%%%%%%%%%%%%%%%
\newcommand{\bfa}{{\bf a}}
\newcommand{\bfb}{{\bf b}}
\newcommand{\bfd}{{\bf d}}
\newcommand{\bfe}{{\bf e}}
\newcommand{\bff}{{\bf f}}
\newcommand{\bfg}{{\bf g}}
\newcommand{\bfh}{{\bf h}}
\newcommand{\bfm}{{\bf m}}
\newcommand{\bfn}{{\bf n}}
\newcommand{\bfp}{{\bf p}}
\newcommand{\bfq}{{\bf q}}
\newcommand{\bft}{{\bf t}}
\newcommand{\bfP}{{\bf P}}
\newcommand{\bfR}{{\bf R}}
\newcommand{\bfU}{{\bf U}}
\newcommand{\bfu}{{\bf u}}
\newcommand{\bfx}{{\bf x}}
\newcommand{\bfz}{{\bf z}}

\newcommand{\bfl}{{\boldsymbol{\ell}}}
\newcommand{\bfmu}{{\boldsymbol{\mu}}}
\newcommand{\bfeta}{{\boldsymbol{\eta}}}
\newcommand{\bftau}{{\boldsymbol{\tau}}}
\newcommand{\bfomega}{{\boldsymbol{\omega}}}
\newcommand{\bfeps}{{\boldsymbol{\varepsilon}}}
\newcommand{\bfsigma}{{\boldsymbol{\sigma}}}
\newcommand{\bfdelta}{{\boldsymbol{\delta}}}
\newcommand{\bfnu}{{\boldsymbol{\nu}}}
\newcommand{\bfrho}{{\boldsymbol{\rho}}}
\newcommand{\bfone}{{\boldsymbol{1}}}

%%%%%%%%%%%%%%%%%%%%%%%
% calligraphic letters %see also categories
%%%%%%%%%%%%%%%%%%%%%%%

\newcommand\cl{\mathcal}

%%%%%%%%%%%%%%%%%%%%%%%
%category language
%%%%%%%%%%%%%%%%%%%%%%%

\newcommand{\calA}{\mathcal A}
\newcommand{\calK}{\mathcal K}
\newcommand{\calD}{\mathcal D}
\newcommand{\calC}{\mathcal C}
\newcommand{\calT}{\mathcal T}
\newcommand{\calB}{\mathcal B}
\newcommand{\calF}{\mathcal F}
\newcommand{\calV}{\mathcal V}
\newcommand{\calP}{\mathcal P}
\newcommand{\calQ}{\mathcal Q}
\newcommand{\calR}{\mathcal R}
\newcommand{\calS}{\mathcal S}
\newcommand{\calH}{\mathcal{H}}
\newcommand{\h}{\calH}

\newcommand{\Loc}{\operatorname{Loc}}
\renewcommand{\Hom}{\operatorname{Hom}}
\newcommand{\Ext}{\operatorname{Ext}}
\newcommand\ext{\operatorname{ext}}
\newcommand{\coker}{\operatorname{coker}}
\newcommand{\RHom}{\operatorname{RHom}}

\newcommand{\spn}{\operatorname{span}}
\newcommand{\Tilt}{\operatorname{Tilt}}

\newcommand{\heart}{\heartsuit}

\newcommand{\sph}{\operatorname{sph}}
\newcommand{\Br}{\operatorname{Br}}
\renewcommand{\Tw}{\operatorname{Tw}}

\newcommand{\torsion}{\mathcal{T}}
\newcommand{\torsionfree}{\mathcal{F}}
\newcommand{\torsionfreeg}{\mathcal{G}}
\newcommand{\torsionfreey}{\mathcal{Y}}
\newcommand{\torsionfreex}{\mathcal{X}}
\newcommand{\tstr}{\mathcal{L}}

\newcommand{\Sim}{\operatorname{Sim}}

\newcommand\CY{CY}
\newcommand\cy{\CY}

\newcommand{\half}{\frac{1}{2}}

%%%%%%%%%%%%%%%%%%%%%%%
%quiver and modules categories
%%%%%%%%%%%%%%%%%%%%%%%

%\newcommand\DQ{{\mathcal{D}^N_Q}}
\newcommand\DQI{{\mathcal{D}^3_{Q_I}}}
\newcommand\AQ{{\mathcal{A}_Q}}%{\mathcal{A}(Q,W)}
\newcommand\AQI{{\mathcal{A}_{Q_I}}}%{\mathcal{A}(Q_I,W_I)}
\DeclareDocumentCommand{\DQ}{ O{3} }{{\mathcal{D}^{#1}_Q}}
\DeclareDocumentCommand{\DQQ}{ O{}O{}}{{\mathcal{D}^{#1}_{#2}}}
\DeclareDocumentCommand{\AQQ}{ O{q} }{{\mathcal{A}_{#1}}}
\newcommand{\Exch}{\operatorname{Exch}}

\newcommand\Rep{\operatorname{Rep}}
\newcommand\rep{\operatorname{rep}}
\newcommand{\modules}{\operatorname{mod}}
\renewcommand{\Mod}{\operatorname{Mod}}
\newcommand\pvd{\operatorname{pvd}}
\newcommand\perf{\operatorname{perf}}
\newcommand\per{\operatorname{per}}

%%%%%%%%%%%%%%%%%%%%%%%
% operator names
%%%%%%%%%%%%%%%%%%%%%%%

\newcommand{\sgn}{\operatorname{sgn}}
\renewcommand{\GL}{\operatorname{GL}}
\renewcommand{\rk}{\operatorname{rank}}
\newcommand{\opL}{\operatorname{L}}
\renewcommand{\Re}{\operatorname{Re}} 
\renewcommand{\Im}{\operatorname{Im}}

%%%%%%%%%%%%%%%%%%%%%%%
% stability conditions
%%%%%%%%%%%%%%%%%%%%%%%

\newcommand\stab{\operatorname{Stab}}
\newcommand\gstab{\operatorname{GStab}}
\newcommand{\GStab}{\gstab}
\newcommand{\PGStab}{\bP\mathrm{GStab}}
\newcommand{\MStab}{\operatorname{MStab}}
\newcommand{\PMStab}{\bP\mathrm{MStab}}
\newcommand{\halfplane}{\mathbb{H}}
\newcommand{\chalfplane}{\overline{\mathbb{H}}}

%%%%%%%%%%%%%%%%%%%%%%%
% shorthand
%%%%%%%%%%%%%%%%%%%%%%%

\newcommand{\bra}{\langle}
\newcommand{\ket}{\rangle}
\newcommand{\del}{\partial}
\newcommand{\wh}{\widehat}
\newcommand{\wt}{\widetilde}
\newcommand{\whmu}{\widehat{\mu}}
\newcommand{\whrho}{\widehat{\rho}}
\newcommand{\whLa}{\widehat{\Lambda}}
\newcommand{\ol}{\overline}

%%%%%%%%%%%%%%%%%%%%%%%
% jeonghoon's other
%%%%%%%%%%%%%%%%%%%%%%%

%%%%%%%%%%%%%%%%%%%%%%%
% martin's other
%%%%%%%%%%%%%%%%%%%%%%%

\newcommand{\ps}{\mathrm{ps}}  

\newcommand{\tdpm}[1][{\Gamma}]{\mathfrak{W}_{\operatorname{pm}}(#1)}
\newcommand{\tdps}[1][{\Gamma}]{\mathfrak{W}_{\operatorname{ps}}(#1)}
\newcommand{\cal}[1]{\mathcal{#1}}

\newlength{\halfbls}\setlength{\halfbls}{.8\baselineskip}

\newcommand*{\Teichmuller}{Teich\-m\"uller\xspace}

\DeclareDocumentCommand{\MSfun}{ O{\mu} }{\mathbf{MS}({#1})}
%% THIS ONE HAS NEITHER GAMMA NOR LAMBDA!!!
\DeclareDocumentCommand{\MSgrp}{ O{\mu} }{\mathcal{MS}({#1})}
\DeclareDocumentCommand{\MScoarse}{ O{\mu} }{\mathrm{MS}({#1})}
\DeclareDocumentCommand{\tMScoarse}{ O{\mu} }{\widetilde{\mathbb{P}\mathrm{MS}}({#1})}

\newcommand{\kmin}{\kappa_{(2g-2)}}
\newcommand{\ktop}{\kappa_{\mu_\Gamma^{\top}}}
\newcommand{\kbot}{\kappa_{\mu_\Gamma^{\bot}}}

%%%%%%%%%%%%%%%%%%%%%%%
%anna's other
%%%%%%%%%%%%%%%%%%%%%%%
\newcommand{\deld}[1]{\frac{\del}{\del {#1} }}
\newcommand{\frap}{\frac{1}{2\pi\ii}}
\newcommand{\Rp}{\mathbb{R}_{>0}}
\newcommand{\Rm}{\R_{<0}}
\newcommand{\TT}{{\mathbb T}}

\def\w{\mathbf{w}}
\def\wtpt{\w_{+2}}
\def\lift{\mathrm{lift}}

\newcommand{\ST}{\operatorname{ST}}  % spherical twist group
\newcommand{\Cone}{\operatorname{Cone}}
\newcommand{\Tri}{\Delta}
\newcommand\surf{\mathbf{S}}  % marked surface
\newcommand\surfo{{\mathbf{S}}_\Tri}  % decorated marked surface
\newcommand\surfw{\surf^\w}  % marked surface+w
\newcommand\sow{\surf_\w}  % decorated marked surface+w
\newcommand\subsur{\Sigma}  % sub decorated marked surface+w
\newcommand\colsur{\overline{\surf}_\w}  % collapsed decorated marked surface+w
\newcommand\surAn{\mathbf{S}_{A_n}}  % marked surface

%corresponding cats:
\def\Dsan{\calD_3(\surfo)}
\def\Dsow{\calD(\sow)}
\def\Dsub{\calD_3(\subsur)}
\def\Dcol{\calD(\colsur)}
\newcommand{\FQuad}{\operatorname{FQuad}}
\newcommand{\FQuab}{\FQuad^{\bullet}}
\newcommand\Stap{\Stab^\circ} % principal component of Stab
\newcommand\Stas{\Stab^\bullet}
%corresponding cats:
\def\Dsan{\calD_3(\surfo)}
\def\Dsow{\calD(\sow)}
\def\Dsub{\calD_3(\subsur)}
\def\Dcol{\calD(\colsur)}

\newcommand{\EGs}{\EG^{s}} % oriented exchange graph of finite hearts
\newcommand{\SEG}{\operatorname{SEG}} % oriented exchange graph of silting
\newcommand{\pSEG}{\operatorname{pSEG}} % oriented exchange graph of partial silting
\newcommand{\EGp}{\EG^\circ}       % principal component of EG
\newcommand{\SEGp}{\SEG^\circ}       % principal component of SEG
\newcommand{\EGb}{\EG^\bullet}       % principal part of EG
\newcommand{\SEGb}{\SEG^\bullet}       % principal part of SEG
\newcommand{\SEGV}{\SEG_{{^\perp}\calV}} % oriented exchange graph of silting
\newcommand{\pSEGV}{\pSEG_{{^\perp}\calV}}
\newcommand{\SEGVb}{\SEGb_{{^\perp}\calV}} % oriented exchange graph of silting, V-perp
\newcommand{\pSEGVb}{\pSEG^\bullet_{{^\perp}\calV}}

\newcommand{\EGT}{\EG^\circ}
\newcommand{\uEG}{\underline{\EG}} % unoriented exchange graph of hearts
\newcommand{\uCEG}{\underline{\CEG}} % unoriented cluster exchange graph
\newcommand\AS{\mathbb{A}}

\newcommand*{\cgeq}{\succcurlyeq}
\newcommand*{\cleq}{\preccurlyeq}

%%%%%%%%%%%%%%%%%%%%%%%%%

\title[A smooth compactification: the $A_{n}$-case]
      {A smooth compactification of spaces of stability conditions: the case
        of the $A_{n}$-quiver}

\author{Anna Barbieri}
\address{A.B.:  Dipartimento di Informatica - Settore Matematica, 
Universit\`a di Verona, 
Strada Le Grazie 15, 37134 Verona - Italy }
\email{anna.barbieri@univr.it}
\thanks{Research of A.B. was partially supported by the
project \emph{REDCOM: Reducing complexity in algebra, logic, combinatorics}, financed by the programme \emph{Ricerca Scientifica
di Eccellenza 2018} of the Fondazione Cariverona.
}

\author{Martin M\"oller}
\address{M.M.: Institut f\"ur Mathematik, Goethe-Universit\"at Frankfurt,
Robert-Mayer-Str. 6-8,
60325 Frankfurt am Main, Germany}
\email{moeller@math.uni-frankfurt.de}
\thanks{Research of M.M. and J.S.\ is supported
by the DFG-project MO 1884/2-1, and by the Collaborative Research Centre
TRR 326 ``Geometry and Arithmetic of Uniformized Structures''.}

\author{Jeonghoon So}
\address{J.S.: Institut f\"ur Mathematik, Goethe-Universit\"at Frankfurt,
Robert-Mayer-Str. 6-8,
60325 Frankfurt am Main, Germany}
\email{so@math.uni-frankfurt.de}

\begin{abstract}
We propose a notion of multi-scale stability conditions with the goal
of providing a smooth compactification of the quotient of the space
of projectivized Bridgeland stability conditions by the group of
autoequivalence. For the case of the 3CY category associated with the
$A_n$-quiver this goal is achieved
by defining a topology and complex structure that relies on a plumbing
construction.
\par
We compare this compactification to the multi-scale compactification
of quadratic differentials and briefly  indicate why even for the Kronecker
quiver this notion needs refinement to provide a full compactification. 
\end{abstract}
\maketitle
\tableofcontents

%%%%%%%%%%%%%%%%%%%%%%%%%%%%%%%%%%%%%
\section{Introduction}\label{sec_intro}
%%%%%%%%%%%%%%%%%%%%%%%%%%%%%%%%%%%%%

Spaces of Bridgeland stability on a triangulated category~$\cD$ have been
introduced in \cite{BrStab}. By definition these spaces $\Stab(\cD)$
are non-compact, in fact they admit a $\bC$-action that allows to rescale
the central charges. The projectivizations $\bP\Stab(\cD) = \bC\backslash
\Stab(\cD)$ are still non-compact, since the ratio of masses of some objects
may go to zero. Recently several partial compactifications have been proposed
(\cite{BDL_Thurston, barbara, KKO, BPPW}), whose merits we compare at the end
of the introduction. Our goal is to provide a generalized notion of stability
conditions that could provide a \emph{smooth} compactification in the sense
of orbifolds of the quotient $\bC \backslash \Stab(\cD)/\pzAut(\cD)$. In this
paper we achieve this goal for the $\CY_3$-categories~$\DQ$ where $Q$ is a
quiver of~$A_n$-type.
\par
Our approach is motivated by the isomorphism of Bridgeland-Smith \cite{BS15}
of $\Stab(\DQ)$ with spaces of quadratic differentials with simple zeros
and  the generalization  of this isomorphism to differentials
with higher order zeros constructed in our previous paper \cite{BMQS}.
Its main result
states that these are isomorphic to spaces of stability conditions
on quotient categories~$\DQ/\cD^3_{Q_I}$ for some subquivers~$Q_I \subset Q$. In both contexts, simple and higher order zeroes, part of the isomorphism is given by identifying central charges of (simple and stable) objects with the distance between zeroes with respect to the metric induced by a quadratic differential. A first and naive idea would be to interpret collision of zeroes of a quadratic differential as the vanishing of central charges. To get a smooth compactification this idea has to be refined.
\par
Our approach is also motivated by the smooth compactification \cite{LMS} of
strata of differentials by multi-scale differentials.
From there we take the idea that if central charges go to zero we
\lq\lq zoom in\rq\rq, i.e., we rescale and get another non-zero \lq\lq central charge\rq\rq\ on a
subcategory. This \lq\lq central charge\rq\rq\ in turn might vanish on some
simple objects and forces us to rescale again, thus arriving at a filtration
of subcategories. From multi-scale differentials we also borrow
the observation that the result of the rescaling process is only well-defined
up to multiplication by a common scalar factor, resulting in the definition
of equivalence below.
\par
Combining these ideas we can now paraphrase our main
notion, see Definition~\ref{def:mstab} for the precise formulation.
A \emph{non-split multi-scale stability condition $(\cA_\bullet, Z_\bullet)$}
on a triangulated category consists of
\begin{itemize}[nosep]
\item a \emph{multi-scale heart}
$\cA_\bullet = (\cA_i)$, i.e., a collection  $\cA_L \subset \cdots
\cA_1 \subset \cA_0$ of abelian categories, and 
\item a \emph{multi-scale central charge}, i.e., a collection
$Z_\bullet = (Z_i)_{i=0}^L$ of non-zero $\Z$-linear maps on the
  Grothendieck groups $Z_i: K(\cA_i) \to \bC$, where $Z_i$ factors through
  $\Ker(Z_{i-1})$, 
\end{itemize}
with the following properties. First, the categories $\cA_i$ are hearts
of the \lq\lq vanishing\rq\rq\ triangulated subcategories $\cV^Z_i \subset \cD$
generated by objects $E \in \cA_{i-1}$ such that the central charge
of the previous filtration step vanishes, i.e.\ $Z_{i-1}(E) = 0$.
Second, the central charges $Z_i$ map simples in $\cA_i \setminus \cA_{i+1}$
to the semi-closed upper half-plane~$\chalfplane$. (This implies that
$\cV^Z_{i+1} \cap \cA_i$ is a Serre subcategory of $\cA_{i}$.)
Third, the induced quotient heart with quotient central charge $(\ol{\cA}_i,
\ol{Z}_i)$ is a stability condition in the usual sense of \cite{BrStab}
on the quotient category $\cV^Z_i/\cV^Z_{i+1}$.
We say that two non-split multi-scale stability conditions are
\emph{equivalent} if the induced quotients $(\ol{\cA}_i, \ol{Z}_i)$
are projectively equivalent for all~$i \geq 1$. We denote by
$\MStab(\cD)$ the set of equivalence classes of those multi-scale
stability conditions and add a circle (e.g.\ $\MStab^\circ(\cD)$) to
denote a specific connected component or a set of reachable stability
conditions. 
\par
In this paper we only consider multi-scale stability conditions
that are \emph{non-split and thus drop this adjective} from now on.
In Section~\ref{subsec:obstructions} below we will explain why this
notion needs refinements to provide compactifications for more general
categories~$\cD$, even for other $\CY_3$ quiver categories~$\DQ$.
\par
We recall that for $\DQ$ of type $A_n$ the group of autoequivalences
$\pzAut^\circ(\cD^3_{A_n})$ preserving a connected component of $\Stab(\cD^3_{A_n})$
(modulo those acting trivially)
is an extension of~$\bZ/(n+3)\bZ$ by the spherical twist group~$\ST(A_n)$,
which is isomorphic to a braid group, see Section~\ref{sec:STandBraid}.
\par
\begin{theorem} \label{intro:main}
The quotient $\MStab^\circ(\cD^3_{A_n})/\pzAut^\circ(\cD^3_{A_n})$ of the space of
multi-scale stability  conditions has a structure
of a complex orbifold. The projectivization of this orbifold $\bC \backslash
\MStab^\circ(\cD^3_{A_n})/\pzAut^\circ(\cD^3_{A_n})$ is a compactification of the
space of projectivized stability conditions up to auto\-equivalence
$\bC \backslash \Stab^\circ(\cD^3_{A_n})/ \pzAut^\circ(\cD^3_{A_n})$
\end{theorem}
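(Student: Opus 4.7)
The plan is to construct a bijection between $\MStab^\circ(\cD^3_{A_n})/\pzAut^\circ(\cD^3_{A_n})$ and (a connected component of) the multi-scale compactification $\LMS$ of strata of meromorphic quadratic differentials on the $(n+3)$-marked sphere, and then transport the smooth complex-orbifold structure of $\LMS$, together with its compactness, through this bijection. The open part will identify with $\Stab^\circ(\cD^3_{A_n})/\pzAut^\circ$ via the Bridgeland--Smith isomorphism \cite{BS15}, and the boundary strata via the level-wise refinement of \cite{BMQS}.

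First, I would define the correspondence level by level. Given a representative $(\cA_\bullet, Z_\bullet)$, the quotient stability condition $(\ol{\cA}_0, \ol{Z}_0)$ on $\cD^3_{A_n}/\cV^Z_1$ yields, via \cite{BMQS}, a (possibly degenerate) quadratic differential on the sphere in which the zeroes indexed by simples of $\cV^Z_1$ have collided. For $A_n$-type, every vanishing subcategory $\cV^Z_1$ decomposes as a product $\cD^3_{Q_{I_1}} \times \cdots \times \cD^3_{Q_{I_k}}$ with each $Q_{I_j}$ again of Dynkin type $A_{n_j}$, so the same dictionary applies to the stability condition on each factor to produce the differentials on the lower-level components; iteration down the filtration produces a full multi-scale differential. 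The projective equivalence defining $\MStab$ matches, level by level, the $\bC^*$-rescaling equivalence on non-top levels in $\LMS$. Injectivity of the induced map $\Phi$ follows because \cite{BS15, BMQS} reconstruct both the heart and the central charge from the differential; surjectivity uses that every boundary stratum in the appropriate component of $\LMS$ has an enhanced level graph whose vertices correspond to subcategories of the form $\cD^3_{Q_I}$ with $Q_I$ of Dynkin type, which in $A_n$-type exhausts all possibilities.

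Second, I would pull back the smooth complex-orbifold atlas of $\LMS$ through $\Phi$. Around a boundary point the plumbing construction of \cite{LMS} provides charts whose coordinates are: projectivized period coordinates on each non-top level (matching the projectivized quotient central charges $[\ol{Z}_j]$), together with one plumbing parameter $t_j \in \bC$ per level transition (encoding the ratio of $Z_j$ to $Z_{j-1}$). Cocycle compatibility of these charts is inherited from \cite{LMS}. Equivariance of $\Phi$ between $\pzAut^\circ(\cD^3_{A_n})$ and the corresponding mapping-class-group quotient of the marked sphere is established on the open part in \cite{BS15} (the spherical twist group matching the braid group by the identification recalled in Section~\ref{sec:STandBraid}, and the $\bZ/(n+3)\bZ$ factor matching the overall rotation), and is extended to the boundary by continuity in plumbing coordinates.

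The compactification statement then follows: the interior of $\LMS$ corresponds under $\Phi^{-1}$ to $\Stab^\circ$, the boundary to filtrations of length $L \geq 1$, and compactness of $\bC \backslash \LMS$ transports to the required compactification after projectivization. The main obstacle I anticipate is intrinsicness and compatibility of topologies: one must verify that the topology pulled back through $\Phi$ agrees with a natural limit topology on $\MStab^\circ$, i.e.\ that a sequence $\sigma_k \in \Stab^\circ$ converges to a multi-scale stability condition in the sense of Definition~\ref{def:mstab} exactly when the associated differentials converge in $\LMS$. This reduces, via the saddle-trajectory dictionary of \cite{BS15}, to controlling which central-charge ratios tend to zero and at what rates, and matching these rates to the plumbing parameters; the resulting local normal forms must then be seen to reproduce exactly the filtration-plus-quotient data prescribed by Definition~\ref{def:mstab}.
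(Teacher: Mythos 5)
Your proposal takes a genuinely different route from the paper. The paper constructs the orbifold structure \emph{intrinsically}: it defines a plumbing operation on multi-scale stability conditions (Section 4/5), uses it to build a topology via explicit neighborhoods, and then proves the four ingredients directly -- Hausdorffness (Theorem \ref{thm:Hausdorff}), compatible complex charts modulo a simple twist group (Proposition \ref{prop:complexcharts}), finiteness of the remaining stabilizer (Proposition \ref{prop:finTws}), and compactness by a normalize-and-rotate argument on sequences (Theorem \ref{thm:compact}). Only afterwards, in Section 6, does it establish an isomorphism $\ol{K}_n$ with $\bC\backslash \ol{Q}_n/S_{n+1}$, as a comparison rather than as the proof mechanism. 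Your proposal inverts this: prove the bijection first and transport structure through it. That is a perfectly legitimate strategy in principle, and the paper's Section~6 is essentially a sketch of what that transport would require -- so the two approaches are dual, with yours front-loading the identification with \cite{LMS} and the paper's front-loading categorical plumbing.

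However, there is a genuine gap in your sketch: you assert that the level-wise application of \cite{BS15, BMQS} gives an injection, because ``both the heart and the central charge are reconstructed from the differential.'' This is not enough. A multi-scale differential is not just a stable pointed curve with a twisted quadratic differential and a level graph; it carries the additional datum of a \emph{prong-matching} at each vertical node, and two multi-scale differentials with the same $(X,\bfz,\Gamma,\bfq)$ but different prong-matchings are different points of $\ol{Q}_n$. Correspondingly, on the categorical side the group of autoequivalences $\pzAut^\circ(\calD, \calV)$ stabilizing a boundary stratum does \emph{not} surject onto the product of autoequivalence groups of the individual level pieces -- the cokernel, a product of finite cyclic groups $\bZ/(n_j+3)\bZ$, is exactly what prong-matchings encode. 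Without handling this you do not get a bijection, only a map to a finite quotient of $\ol{Q}_n$, and transporting the orbifold atlas back is then ambiguous. (The paper addresses this explicitly in ``Proof of the isomorphism~\eqref{eq:isoofmulti}, Part II,'' selecting the unique prong-matching compatible with purely-imaginary plumbing.) A second, smaller omission: your surjectivity argument should note that for $A_n$-type strata all enhanced level graphs are trees with \emph{no horizontal edges} (Lemma in Section~\ref{sec:enhancedLG}); this is precisely why the multi-scale stability condition formalism hits every boundary stratum here, and precisely where the approach fails already for the Kronecker quiver (Section~\ref{subsec:obstructions}). Finally, the topology comparison you flag as ``the main obstacle'' is indeed the hard part, and the error bound in Proposition~\ref{prop:plumbassociative} (near-commutativity of plumbing and rotation) is the technical core that makes the pulled-back topology agree with the intrinsic limit topology; your sketch does not yet contain an analogue of that estimate.
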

\par
As a complex orbifold, the space $\bC \backslash \Stab^\circ(\cD^3_{A_n})/
\pzAut^\circ(\cD^3_{A_n})$ is simply the moduli space of curves $\cM_{0,n+2}$.
The compactification $\bC \backslash \MStab^\circ(\cD^3_{A_n})
/\pzAut^\circ(\cD^3_{A_n})$
is however not equal to the Deligne-Mumford compactification
$\barmoduli[0,n+2]$. It is rather a blowup of the latter, as we explain
in Section~\ref{sec:BCGGMforAn}.
%

%%%%%%%%%%%%%%%%%%%
\subsection{Techniques}\label{subsec:techniques}
%%%%%%%%%%%%%%%%%%%
One important technique is the \emph{plumbing} of a multi-scale stability
condition, depending on complex numbers $\tau_i$ for $i=0,\ldots,L$,  that
builds a usual stability condition. If $\tau_i \in i\bR_{-}$ is purely imaginary
for all~$i$ the result is just the top level heart $\calA_0$ together with
a central charge that is a rescaled linear combination of the~$Z_i$. One should
envision that the size of $Z_i$ is $e^{- \pi i \tau_i}$, thus very small for~$\tau_i$
close to~$-i\infty$ and this is continuously completed by declaring
that $\tau_i = -i\infty$ means no plumbing at all. The process of plumbing
becomes interesting for $\tau_i$ not purely imaginary. This involves
rotating $\calA_i$. The higher level hearts $\calA_{i-1}$ etc.\ then have to
be modified to still contain the rotated heart while still providing
the same quotient heart. This modification of the representative however causes
that the plumbing action of $(\tau_1,\ldots,\tau_L) \in -\bH^L$ is not the
action of a semigroup: the semigroup addition and the action only almost
commute, with an error that goes to zero as $\tau_i \to -i\infty$.
\par
In this way, we give $\MStab^\circ(\cD^3_{A_n})$ a topology by declaring
neighborhoods of a multi-scale stability conditions to be plumbings with
$t_i := e^{-\pi i \tau_i}$ small composed with a small deformation of the stability
condition. However, this space is not locally compact. In fact, for $n=2$
the space is isomorphic to $\HH \cup \bP^1(\bQ)$ with the horoball topology,
as we will explain in Section~\ref{sec:A2revisited}.
\par
The complex orbifold structure on the quotient $\MStab^\circ(\cD^3_{A_n})/
\pzAut^\circ(\cD^3_{A_n})$ is locally given by the functions~$t_i$ together with
the central charges~$Z_i$. This statement requires to control the stabilizer of
a neighborhood of the multi-scale stability condition. We show that this
stabilizer contains a finite index subgroup isomorphic to~$\bZ^L$.
\par
For compactness of $\bC \backslash \Stab^\circ(\cD^3_{A_n})/\pzAut^\circ(\cD^3_{A_n})$
the obvious idea is to normalize in a given sequence of multi-scale stability
conditions the mass of the largest simple to be one, and then define an
order on the set of simples corresponding to the speed in which their central
charges go to zero. The level sets for this order will then correspond
to the index set of the limiting multi-scale stability condition. The
challenge for this idea arises, if the central charge
of a stable but non-simple object tends to zero despite the normalization
while the central charge of its simple factors do not. This forces the central
charge of some simple object to tend to the positive real axis.
\par
Consider for example the sequence $\sigma_n = (\cA, Z_n)$ of stability
conditions on $\cD^3_{A_2}$, all supported on a fixed heart~$\cA$ and
with
\be \label{eq:Z}
Z_n(S_1) = -1 + i/n, \qquad Z_n(S_2) = 1 + i/n\,.
\ee
see Figure~\ref{fig:A2rotate} for the picture of the corresponding quadratic
differential. 
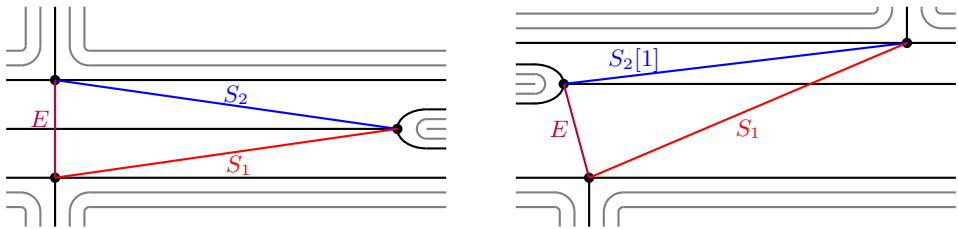
\begin{figure}[h]
  \begin{tikzpicture}[scale=.65]
    \hspace{-0.3cm}
		\begin{scope}[shift={(0,0cm)}]
		%zeroes
		
		\coordinate (A) at (1,1);
		\fill[black] (A) circle (3pt);
		
		\coordinate (B) at (8,2);
		\fill[black] (B) circle (3pt);
		
		\coordinate (C) at (1,3);
		\fill[black] (C) circle (3pt);
		
		%rays
		
		\draw[>=stealth,thick] (1,1) -- (1,0);
		
		\draw[>=stealth,thick] (1,1) -- (0,1);
		
		\draw[>=stealth,thick] (1,1) -- (9,1);
		
		\draw[>=stealth,thick] (1,3) -- (1,4.5);
		
		\draw[>=stealth,thick] (1,3) -- (0,3);
		
		\draw[>=stealth,thick] (1,3) -- (9,3);
		
		\draw[>=stealth,thick] (8,2) -- (0,2);
		
		\draw[thick] (8,2) arc [start angle=180, end angle=270, x radius=0.6cm, y radius =0.4cm];
		
		\draw[thick] (8,2) arc [start angle=180, end angle=90, x radius=0.6cm, y radius =0.4cm];
		
		\draw[>=stealth,thick] (8.6,2.4) -- (9,2.4);
		
		\draw[>=stealth,thick] (8.6,1.6) -- (9,1.6);
		
		%halfplanes
		
		%upper left
		
		\draw[gray, thick] (0,3.3) -- (0.3,3.3);
		\draw[gray, thick] (0.3,3.3) arc [start angle=270, end angle=360, x radius=0.4cm, y radius =0.4cm];
		\draw[gray, thick] (0.7,3.7) -- (0.7,4.5);
		
		\draw[gray, thick] (0,3.6) -- (0.3,3.6);
		\draw[gray, thick] (0.3,3.6) arc [start angle=270, end angle=360, x radius=0.1cm, y radius =0.1cm];
		\draw[gray, thick] (0.4,3.7) -- (0.4,4.5);

		%lower left
		
		\draw[gray, thick] (0,0.7) -- (0.3,0.7);
		\draw[gray, thick] (0.3,0.7) arc [start angle=90, end angle=0, x radius=0.4cm, y radius =0.4cm];
		\draw[gray, thick] (0.7,0) -- (0.7,0.3);
		
		\draw[gray, thick] (0,0.4) -- (0.3,0.4);
		\draw[gray, thick] (0.3,0.4) arc [start angle=90, end angle=0, x radius=0.1cm, y radius =0.1cm];
		\draw[gray, thick] (0.4,0) -- (0.4,0.3);
		
		%upper right
		
		\draw[gray, thick] (1.7,3.3) -- (9,3.3);
		\draw[gray, thick] (1.3,3.7) arc [start angle=180, end angle=270, x radius=0.4cm, y radius =0.4cm];
		\draw[gray, thick] (1.3,3.7) -- (1.3,4.5);
		
		\draw[gray, thick] (1.7,3.6) -- (9,3.6);
		\draw[gray, thick] (1.7,3.6) arc [start angle=270, end angle=180, x radius=0.1cm, y radius =0.1cm];
		\draw[gray, thick] (1.6,3.7) -- (1.6,4.5);
		
		%lower right
		
		\draw[gray, thick] (1.3,0) -- (1.3,0.3);
		\draw[gray, thick] (1.3,0.3) arc [start angle=180, end angle=90, x radius=0.4cm, y radius =0.4cm];
		\draw[gray, thick] (1.7,0.7) -- (9,0.7);
		
		\draw[gray, thick] (1.6,0) -- (1.6,0.3);
		\draw[gray, thick] (1.6,0.3) arc [start angle=180, end angle=90, x radius=0.1cm, y radius =0.1cm];
		\draw[gray, thick] (1.7,0.4) -- (9,0.4);
		
		%middle right
		\draw[gray, thick] (8.6,2.2) arc [start angle=90, end angle=270, x radius=0.2cm, y radius =0.2cm];
		
		\draw[>=stealth,gray, thick] (8.6,2.2) -- (9,2.2);
		
		\draw[>=stealth,gray, thick] (8.6,1.8) -- (9,1.8);
		
		\draw[>=stealth,gray, thick] (8.6,2) -- (9,2);
		
		%saddles
		
		%S_2
		\draw[blue, thick] (C) -- (B);
		
		\node[right, blue] at (4.25,2.7) {\small $S_2$}; 
		
		%S_1
		\draw[red, thick] (A) -- (B);
		
		\node[right, red] at (4.3,1.25) {\small $S_1$}; 
		
		%E
		\draw[purple, thick] (A) -- (C);
		
		\node[right, purple] at (0.3,2.2) {\small $E$}; 
		
		\end{scope}
	         \hspace{0.6cm}
		\begin{scope}[shift={(10cm,0cm)}]
			%zeroes

			\coordinate (A) at (1,1);
			\fill[black] (A) circle (3pt);
			
			\coordinate (B) at (7.5,3.76);
			\fill[black] (B) circle (3pt);
			
			\coordinate (C) at (0.48,2.92);
			\fill[black] (C) circle (3pt);

			%rays
			
			\draw[>=stealth,thick] (1,1) -- (1,0);
			
			\draw[>=stealth,thick] (1,1) -- (-0.5,1);
			
			\draw[>=stealth,thick] (1,1) -- (8.5,1);
			
			\draw[>=stealth,thick] (0.48,2.92) -- (8.5,2.92);
			
			\draw[>=stealth,thick] (-0.5,3.76) -- (8.5,3.76);
			
			\draw[>=stealth,thick] (7.5,3.76) -- (7.5,4.5);
			
			\draw[thick] (0.48,2.92) arc [start angle=360, end angle=270, x radius=0.6cm, y radius =0.4cm];
			
			\draw[thick] (0.48,2.92) arc [start angle=360, end angle=450, x radius=0.6cm, y radius =0.4cm];

			\draw[>=stealth,thick] (-0.1,3.32) -- (-0.5,3.32);
			
			\draw[>=stealth,thick] (-0.1,2.52) -- (-0.5,2.52);
			
			%halfplanes
			
			%upper left
			
			\draw[gray, thick] (-0.5,4.06) -- (6.8,4.06);
			\draw[gray, thick] (6.8,4.06) arc [start angle=270, end angle=360, x radius=0.4cm, y radius =0.4cm];
			\draw[gray, thick] (7.2,4.46) -- (7.2,4.5);
			
			\draw[gray, thick] (-0.5,4.36) -- (6.8,4.36);
			\draw[gray, thick] (6.8,4.36) arc [start angle=270, end angle=360, x radius=0.1cm, y radius =0.1cm];
			\draw[gray, thick] (6.9,4.46) -- (6.9,4.5);
			
			%upper right

			\draw[gray, thick] (8.2,4.06) -- (8.5,4.06);
			\draw[gray, thick] (8.2,4.06) arc [start angle=270, end angle=180, x radius=0.4cm, y radius =0.4cm];
			\draw[gray, thick] (7.8,4.46) -- (7.8,4.5);
			
			\draw[gray, thick] (8.2,4.36) -- (8.5,4.36);
			\draw[gray, thick] (8.2,4.36) arc [start angle=270, end angle=180, x radius=0.1cm, y radius =0.1cm];
			\draw[gray, thick] (8.1,4.46) -- (8.1,4.5);
			
			%lower left
			
			\draw[gray, thick] (-0.5,0.7) -- (0.3,0.7);
			\draw[gray, thick] (0.3,0.7) arc [start angle=90, end angle=0, x radius=0.4cm, y radius =0.4cm];
			\draw[gray, thick] (0.7,0) -- (0.7,0.3);
			
			\draw[gray, thick] (-0.5,0.4) -- (0.3,0.4);
			\draw[gray, thick] (0.3,0.4) arc [start angle=90, end angle=0, x radius=0.1cm, y radius =0.1cm];
			\draw[gray, thick] (0.4,0) -- (0.4,0.3);
			
			%lower right
			
			\draw[gray, thick] (1.3,0) -- (1.3,0.3);
			\draw[gray, thick] (1.3,0.3) arc [start angle=180, end angle=90, x radius=0.4cm, y radius =0.4cm];
			\draw[gray, thick] (1.7,0.7) -- (8.5,0.7);
			
			\draw[gray, thick] (1.6,0) -- (1.6,0.3);
			\draw[gray, thick] (1.6,0.3) arc [start angle=180, end angle=90, x radius=0.1cm, y radius =0.1cm];
			\draw[gray, thick] (1.7,0.4) -- (8.5,0.4);
			
			%middle left
			\draw[gray, thick] (-0.1,3.12) arc [start angle=90, end angle=-90, x radius=0.2cm, y radius =0.2cm];

			\draw[>=stealth,thick, gray] (-0.1,3.12) -- (-0.5,3.12);
			
			\draw[>=stealth,gray, thick] (-0.1,2.72) -- (-0.5,2.72);
			
			\draw[>=stealth,gray, thick] (-0.1,2.92) -- (-0.5,2.92);
			
			%saddles
			
			%S_2
			\draw[blue, thick] (C) -- (B);
			
			\node[right, blue] at (1.2,3.4) {\small $S_2[1]$}; 
			
			%S_1
			\draw[red, thick] (A) -- (B);
			\node[right, red] at (3.8,2) {\small $S_1$}; 
			
			%E
			\draw[purple, thick] (A) -- (C);
			\node[right, purple] at (0,2) {\small $E$}; 
		\end{scope}
	\end{tikzpicture}
  \caption{Quadratic differential illustrating a degenerating sequence
    in $\Stab^\circ(\cD^3_{A_2})$ and a rotated situation}
        \label{fig:A2rotate}
\end{figure}

In the limit $n\to \infty$, the central charge vanishes precisely on
the subcategory generated by the non-trivial extension~$E$ of~$S_1$
by~$S_2$. Since $E$ is not simple, it does not define 
a non-trivial Serre subcategory of~$\calA$, contradicting a consequence
of our definition of multi-scale stability condition.
\par
The solution to find the limiting object is to rotate the sequence
by $\lambda_n$ so that $Z_{\lambda_n \sigma_n}(S_2) \in \HH^-$, see
Figure~\ref{fig:A2rotate} on the right. The heart~$\cA$ is replaced by
$\calA_0:=\mu_{S_2}\cl A$, the tilt one would usually perform also
inside $\Stab(\cD^3_{A_2})$ if the central charge of the simple~$S_2$ approaches
the positive real axis. Now~$E$ is simple and the vanishing category $\calV^Z_1$
generated by~$E$ has the property that $\calV^Z_1 \cap \calA_0$ is Serre in~$\calA_0$.
%Note that we may take $\lambda_n \to 1$, so there is no need to rotate back
%after taking the limit.
The limiting multi-scale stability condition consists of the
filtration $\calA_0\supset\calA_1=\bra E\ket$ together with $Z_0(S_2[1]) = -1$
and $Z_0(E)=0$ as well as $Z_1(E)$ arbitrary non-zero in view of the
notion of equivalence.
\par

%%%%%%%%%%%%%%%%%%%
\subsection{Obstructions to generalization} \label{subsec:obstructions}
%%%%%%%%%%%%%%%%%%%

Continuing the idea of proof for compactness we consider one of the simplest
cases beyond $A_n$-type quivers, stability conditions on the $\CY_3$-category of
the Kronecker quiver, or, in the language of quadratic differentials (see
\cite[Example~12.5]{BS15}), the stratum $\calQ(-3,-3,1,1)$ with two triple
poles and two simple zeros. We again consider a situation where the central charge
of a stable but non-simple object tends to zero within the normalization that
the mass of the largest simple is approximately one. Now using a small rotation does
not seem to help.  The compactification of strata of differentials (\cite{LMS,kdiff})
that we recall in Section~\ref{sec:BCGGMforAn} hints to the reason for this problem.
\par
The boundary strata of the compactification are encoded by level graphs,
whose vertices correspond to components of stable curves and where
a vertex~$v_1$ is above a vertex~$v_2$ if the differential tends to zero
on~$v_2$ more quickly than on~$v_1$. In terms of multi-scale stability
conditions we find the same level structure (given by index of~$\calA_i$)
and components (given by the components of the ext-quiver on the simples
in~$\calA_i$). However, for differentials we allow for \emph{horizontal
degenerations}, i.e., edges between vertices on the same level. In this
degeneration of the Kronecker quiver alluded to above (with central charge
as in~\eqref{eq:Z}) we should normalize the sequence to keep the length
of the \lq\lq short\rq\rq\ stable object~$E$ (i.e.\ the extension of~$S_1$ by~$S_2$
or geometrically the length of the core curve of the cylinder) constant.
This happens at the expense of letting the mass of both simples go to
infinity. In the geometric picture, the surface splits into two subsurfaces
with quadratic differentials of type $(-3,-2,1)$. It would be interesting
to enlarge the concept of non-split multi-scale differentials so as to
include this \lq\lq splitting\rq\rq\ of the category.
\par
It seems quite plausible that the current definition of (non-split) multi-scale
stability condition provides a partial compactification of $\bC \backslash
\Stab^\circ(\cD^3_Q)/ \pzAut^\circ(\cD^3_Q)$ to a complex orbifold for
general quiver categories (or whenever $\Stab^\circ(\cD)$ is of tame type).
This requires to overcome several technical problems that we highlight
along with the definition of the topology in Section~\ref{sec:multiscale}.
Currently we rely on the fact that hearts in $\calD_{A_n}^3$ have finitely many
indecomposables.

%%%%%%%%%%%%%%%%%%%
\subsection{Comparison to other compactifications} \label{subsec:other}
%%%%%%%%%%%%%%%%%%%

We are aware of four other papers aiming to compactify spaces of
stability conditions. Bolognese \cite{barbara} uses a metric completion
to give a partial compactification. The Thurston-type compactifications
of Bapat, Deopukar and Licata \cite{BDL_Thurston} and Kikuta-Koseki-Ouchi
\cite{KKO} use the tuple of all masses to get a map from the space
of projectivized stability conditions to some projective space and
take the closure there. The space of lax stability conditions of
Broomhead, Pauksztello, Ploog, Woolf \cite{BPPW} allows some of the masses
of semistable objects to be zero, but requires a modified support property
and zero being an isolated point of the set of all masses.
\par
Common to all these approaches is that they aim to (partially) compactify
the space $\bP \Stab(\calD)$ of projectivized stability conditions whereas
we compactify its quotient by $\pzAut^\circ(\cD^3_{A_n})$. Moreover in all
these four papers the boundary or boundary strata are real codimension one,
whereas in our approach the boundary has complex codimension one, since
we construct a complex orbifold. We mention that \cite[Section~15]{LMS}
proposes a real-oriented blowup of the complex orbifold, thus a real
manifold with corners, to which the $\GL_2^+(\bR)$-action extends.
This real-oriented blowup construction can certainly also be incorporated
into a modified definition of multi-scale stability conditions. In this
real blowup there are real codimension one boundary strata, which parametrize
stability conditions on a quotient category by a rank one subcategory
together -- this is effect of the real blowup -- with the phase of the
simple with vanishing mass. This seems to agree with the codimension
one boundary strata of \cite{BPPW}. Since both approaches, ours and
\cite{BPPW}, use stability conditions on quotient categories and the
difficulties often stem from lifting problems, it would be interesting
to compare or combine them. However this does not seem to solve the
problem of getting a compact space in a more general setting, as we
see no subsitute for the missing \lq\lq horizontal degenerations\rq\rq .

\subsection*{Acknowledgments} We thank Dawei Chen and Yu Qiu
for inspiring discussions.
\par

% !TeX root    = CompactStab.tex
%%%%%%%%%%%%%%%%%%%%%%%%%%%%%%%%%%%%%
\section{Background and notation}\label{sec:notation}
%%%%%%%%%%%%%%%%%%%%%%%%%%%%%%%%%%%%%

In this section we recall basic material about stability conditions,
quivers and their $\CY_3$-categories $\calD_Q^3$. References for this
include \cite{tiltingbook, gelfandmanin,f-pervers, BrStab,bridgeland_survey,
  neeman, DWZ,keller11}.
\par

\subsection{Notation and fundamental assumptions} \label{sec:fundamental}
We fix some notation that will be used throughout. Let $\bk$ denote an algebraically
closed field, and  any category is additive, $\bk$-linear, and essentially small. 
We deal with finite-dimensional abelian and  triangulated categories of modules
(resp., dg modules) over a finite-dimensional algebra (resp., a dg algebra). Whenever
we define a subcategory, we mean that there is a fully faithful functor 
that we assume to be the embedding. 
\par
Given subcategories $\calA_1,\calA_2$ of an abelian or a triangulated
category~$\calC$, and a set of objects $\cl B$, we define
(usually omitting the subscript~$\calC$)
%the following subcategories
\begin{align*}
\calA_1\perp_{\cl C}\calA_2 &:=\{M\in\calC\mid\text{$\exists$ s.e.s or triangle
$T\to M\to F$ s.t. $T\in\calA_1, F\in\calA_2$} \},\\
&  \qquad \text{ if }\Hom(H_1,H_2)=0 \text{ for any } H_1\in\calA_1\,,H_2\in\calA_2,\\
%\bra \cl B\ket_{\cl C}&:=\{M\in\calC\mid\text{$\exists$ s.e.s or
%triangle $T\to M\to F$ s.t. $T,F\in\cl B$} \}\,.
\end{align*}
We define $\bra \cl B\ket$ depending on the context to be the
\emph{abelian category generated by}~$\cl B$, the thick
\emph{triangulated category generated by~}$\cl B$, the
\emph{torsion-free class} or the
\emph{torsion class generated by}~$\cl B$.
%We will say that $\bra \cl B\ket$ is the subcategory \emph{generated} by 
%$\cl B$. 
\par
Let $\calA$ be an abelian category. It is called a (finite) 
length category if any object $E\in \calA$ admits a finite sequence of subobjects
\[0=E_0\subset E_1 \subset \dots \subset E_m=E\]
such that all $E_i/E_{i-1}$ are simple. It is called finite if 
moreover it has finitely many simple objects. If an abelian 
category is finite, its Grothendieck group is generated by the 
isomorphism classes of its simples.
\par
Let $\calD$ be a triangulated category. For simplicity we 
make the strong assumption that its Grothendieck group 
is a finite rank lattice $K(\calD)\simeq \bZ^{\oplus n}$. 
This is not the general situation, though it will hold for 
the most relevant categories considered later. The main 
reason for such an hypothesis is to simplify the definition 
of a (multi-scale) stability condition.
\begin{definition}A \emph{bounded t-structure} on a triangulated 
category $\calD$ is the datum of a full additive subcategory $\cl P\subset \calD$ stable under positive shift such that 
$\cl P\perp\cl P^\perp=\calD$, and moreover 
$\calD$ is generated by $\cup_{m\in\bZ}\big(\cl P[m]\cap\cl P^\perp[-m]\big)$. 
The \emph{heart} of a bounded t-structure is the subcategory 
$\cl P \cap \cl P^\perp[1]$.
\end{definition}
The heart $\calA$ of a bounded t-structure is an 
abelian category. The cohomological functor 
$H^0:\calD\to \calA$ 
realizes an isomorphism at the level of Grothendieck groups
\[H^0_*:K(\calA)\simeq K(\calD).\]
Moreover, a bounded t-structure is uniquely determined by its 
heart as $\cl P= \bra \calH[i],i\geq 0\ket$.
For this reason we will speak about a t-structure or its heart 
interchangeably. 
\par
There is a partial order on hearts $\calA_1\le\calA_2$ defined
by $\calP_1\supset\calP_2$ or equivalently $\calP_1^\perp\supset\calP_2^\perp$.
A heart $\calH$ will be called
\emph{intermediate} with respect to a fixed heart $\calA$ if $\calA \leq
\calH \leq \calA[1]$.

%%%%%%%%%%%%%%%%%%%
\subsection{Torsion pairs and tilting} \label{subsec:tilt}
%%%%%%%%%%%%%%%%%%%

A \emph{torsion pair} for an abelian category $\calA$ consists on a
pair $(\torsion,\torsionfree)$ of full additive subcategories of $\calA$
called \emph{torsion class} and  \emph{torsion-free class}, such that
$\calA=\torsion\perp\torsionfree$. In other words, a torsion pair mimics a
bounded t-structure at abelian level. In fact, a torsion pair in the heart of a 
bounded t-structure $\calA$ in a $\calD$ defines new bounded t-structures with hearts 
\[ \mu_{\torsionfree}^\sharp\calA:= \torsion \perp_\calD \torsionfree[1], \quad
            \mu_{\torsion}^\flat\calA:=\torsionfree\perp_\calD \torsion[-1]
    \]
They are called respectively the \emph{forward tilt} at $\torsionfree$ 
(resp.\ \emph{backward tilt} at $\torsion$), \cite{tiltingbook}. They are
related by $\mu^\sharp_{\torsion[-1]} \mu^\flat_\torsion\calA=\calA$ and
$\mu^\flat_{\torsionfree[1]}\mu^\sharp_\torsionfree\calA = \calA$.
The forward tilt of~$\calA$ at a torsion-free class is intermediate with respect 
to~$\calA$; the backward tilt of~$\calA$ at a torsion class 
is intermediate with respect to~$\calA[-1]$.
\par
In a finite abelian category torsion and torsion free classes are closed under 
extensions and are characterized by being closed under 
quotients and subobjects respectively. This implies that any 
Serre subcategory is both torsion and torsion-free class. 
When we tilt at a torsion(-free) class $\bra S\ket$ 
generated by a simple object~$S$, we speak about a simple tilt 
and we simplify the notation to
$\mu^\sharp_S\calA$ and $\mu^\flat_S\calA$.
Suppose $\calA$ is a finite heart with simple objects $\Sim(\calA):=\{S_1,\dots,
S_n\}$, which are rigid, i.e., have no non-trivial self-extensions, and
let $S\in\Sim(\calA)$. Then
\[\begin{aligned}
  \Sim \mu^\sharp\calA &= \{S[1]\}\cup
  \{\Cone\big(S\stackrel{ev}{\to}S[1]\otimes \Ext^1(T,S)^*\big)[-1], 
  \ S\neq T \in\Sim\calA\}\\
    \Sim \mu^\flat\calA &= \{S[-1]\}\cup
    \{\Cone\big(S[-1]\otimes \Ext^1(S,T)\stackrel{ev}{\to} T), 
    \ S\neq T \in\Sim\calA\}\,,
\end{aligned}\]  
see e.g.\ \cite{kq}. Note also that the simple tilting of a finite heart in 
$\calD$ is another finite heart. In some cases tilting at a torsion (or 
torsion-free) class can be decomposed into a finite sequence of 
simple tilts.
\par
\begin{prop}%[{\cite[Proof of Proposition 2.4]{woolf1}}]
  \label{prop:tiltviasimpletilt}
  Suppose that $\calA$ is a finite heart. 
\begin{enumerate}
\item Tilting at a torsion-free class in $\calA$ containing only finitely many
indecomposables is equivalent to performing a sequence of simple forward tilts.
\item  Conversely, suppose $a_1, \dots, a_{k}$ is a finite sequence of objects
in $\calA$ such that $a_i \in \calA$ is simple in $\mu^\sharp_{a_{i-1}}\dots
\mu^\sharp_{a_1}\calA$. Then $\mu^\sharp_{a_{k}}\dots\mu^\sharp_{a_1}\calA
= \mu^\sharp_\cal{F}\calA$ where $\cal{F} = \langle a_0,\dots, a_k \rangle$.  
\item More generally, for any two torsion-free classes $\cal{F}_1\subset \cal{F}_2$
with $\calF_2$ having finitely many indecomposables, there is a sequence of
simple tilts at objects $a_i$ such that $\mu_{\cal{F}_2}^\sharp=\mu^\sharp_{a_k}\cdots
\mu^\sharp_{a_1} \mu^\sharp_{\cal{F}_1}$ and $\cal{F}_2=\bra \cal{F}_1,a_1,\cdots, a_k\ket$.
\end{enumerate}
\end{prop}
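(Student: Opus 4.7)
The plan is to prove (1) by induction on the number $k$ of indecomposable objects in $\torsionfree$; parts (2) and (3) will then follow from the same machinery. The engine of the induction is the following \emph{key lemma}: \emph{every nonzero torsion-free class $\torsionfree$ in a finite heart $\calA$ contains a simple object of $\calA$}. This holds because any nonzero $F \in \torsionfree$ admits a simple subobject in $\calA$ by finite-length-ness, and this subobject lies in $\torsionfree$ by closure under subobjects (which in the finite setting characterizes torsion-free classes).

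For the inductive step in (1), I would pick $a_1 \in \Sim(\calA) \cap \torsionfree$ via the key lemma, let $\torsion_1 = \{X \in \calA : \Hom(X, a_1) = 0\}$ be the torsion class complementary to $\bra a_1\ket$, and form $\calA_1 := \mu^\sharp_{a_1}\calA$. The technical core is the \emph{refinement identity}
$$\mu^\sharp_{\torsionfree}\calA \;=\; \mu^\sharp_{\torsionfree \cap \torsion_1}\calA_1,$$
where $\torsionfree \cap \torsion_1$ must be verified to be a torsion-free class in $\calA_1$. Granted this, and noting that $\torsionfree \cap \torsion_1$ has at most $k-1$ indecomposables (since $a_1$ itself is excluded by $\Hom(a_1, a_1) \neq 0$), the inductive hypothesis applied in $\calA_1$ produces a sequence of simple tilts $\mu^\sharp_{a_k}\cdots\mu^\sharp_{a_2}\calA_1$, which prepended by $\mu^\sharp_{a_1}$ is the desired factorization.

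For (2), iterating the refinement identity assembles the composite $\mu^\sharp_{a_k}\cdots\mu^\sharp_{a_1}\calA$ into a single tilt at $\bra a_1, \ldots, a_k\ket$; the hypothesis that each $a_i$ is simple in the $i$-th intermediate heart is exactly what is needed to identify each step as a simple tilt of the form produced by (1). For (3), one applies (1) inside $\mu^\sharp_{\torsionfree_1}\calA$ to the relative torsion-free class $\torsionfree_2 \cap \torsion_{\torsionfree_1}$, which remains finite by hypothesis on $\torsionfree_2$. The main obstacle is the rigorous verification of the refinement identity: since the subobject/quotient structure of $\calA_1$ differs nontrivially from that of $\calA$, one must check that $\torsionfree \cap \torsion_1$, defined as a subcategory of the old heart, remains closed under subobjects when reinterpreted inside $\calA_1$, and that the two HRS constructions interact compatibly to give the same bounded t-structure on $\calD$. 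Finiteness of $\torsionfree$ and rigidity of simples (already invoked for well-posedness of simple tilts) are essential in this bookkeeping.
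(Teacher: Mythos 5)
The paper's own proof is a two-line citation: parts (1)--(2) are referred to the proof of Proposition~2.4 in Woolf (after dualizing backward tilts to forward ones via $\mu^\sharp_{\torsion[-1]}\mu^\flat_\torsion\calA = \calA$), and part (3) to Proposition~7.5 of the ALSV reference. Your proof is a genuinely self-contained alternative, and it is essentially correct. The key lemma (every nonzero torsion-free class in a finite heart contains a simple of $\calA$: take any simple subobject of a nonzero $F \in \torsionfree$ and use that torsion-free classes are closed under subobjects) is sound, and the refinement identity
$\mu^\sharp_{\torsionfree}\calA = \mu^\sharp_{\torsionfree \cap \torsion_1}\calA_1$
is exactly what one gets by identifying $\torsionfree \cap \torsion_1$ with $\calA_1 \cap \mu^\sharp_\torsionfree\calA[-1]$, the canonical torsion-free class of $\calA_1$ attached to the intermediate heart $\mu^\sharp_\torsionfree\calA$. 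That the number of indecomposables strictly drops (since $a_1 \notin \torsion_1$) makes the induction terminate, and the fact that a simple tilt of a finite heart is finite makes the inductive hypothesis applicable. For part (3), the same identification gives the relative torsion-free class $\torsionfree_2 \cap \torsion_{\torsionfree_1}$, whose indecomposables are a subset of those of $\torsionfree_2$, so (1) applies.

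The one place where your sketch is thinner than it should be is part (2). You say ``iterating the refinement identity assembles the composite,'' but the refinement identity as you stated it runs in the other direction (decomposing a single tilt, not assembling simple tilts). What is actually needed is the observation that if each $a_i$ lies in $\calA$ (as the statement assumes), then every intermediate heart $\calB_i = \mu^\sharp_{a_i}\cdots\mu^\sharp_{a_1}\calA$ stays in the interval $[\calA, \calA[1]]$: the torsion class of the next tilt lies in $\calB_i \subset \calA\perp\calA[1]$ and the new torsion-free shift $\bra a_{i+1}\ket[1]$ lies in $\calA[1]$, so $\calB_{i+1}$ still has $\calA$-cohomology concentrated in degrees $\{0,-1\}$. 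Only then can one write $\calB_k = \mu^\sharp_\torsionfree\calA$ with $\torsionfree = \calA \cap \calB_k[-1]$ and identify $\torsionfree$ with $\bra a_1,\dots,a_k\ket$. This step is not a consequence of what you proved but a complementary lemma; once added, the argument goes through. Compared with the paper's citation-based approach, your proof has the advantage of being explicit and self-contained, at the cost of the extra bookkeeping you correctly flag at the end.
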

\begin{proof}
For the proof of the first two items see \cite[Proof of Proposition 2.4]{woolf1}
and use the relation $\mu^\sharp_{\torsion[-1]}(\mu^\flat_\torsion\calA) = \calA$ to
convert the statement about backward tilts in loc.\ cit.\ to the given version.
The last statement follows from \cite[Section 7.1]{ALSV} (see in particular
Proposition~7.5).
\end{proof}

%%%%%%%%%%%%%%%%%%%%%%%%%
\subsection{Bridgeland stability conditions}\label{subsec:stab}
%%%%%%%%%%%%%%%%%%%%%%%%%

Recall from \cite{BrStab} that a \emph{stability condition} $\sigma$ on a
triangulated category $\calD$ is a pair $\sigma= (\calA,Z)$, consisting of
the heart of a bounded t-structure $\calA$, together with a central
charge $Z\in\Hom(K(\calA),\bC)$, i.e., a group homomorphism that maps the
class of non-zero elements in $\calA$ to the semi-closed half plane
$\chalfplane:=\{re^{\pi i \theta}\in\R | r\in \R_{>0},\ 0<\theta\leq 1\}$
and that satisfies the support property and Harder-Narasimhan condition
of loc.~cit. We fix a finite rank lattice $K$ and a surjective morphism
$\nu: K(\calA)\to K$ and require that $Z$ factors through $\nu$. In the case
$K(\calA) \simeq \Z^n$ we require that $K = K(\calA)$ and $\nu =
\mathrm{id}$.
\par 
We use that stability conditions can equivalently be specified as a $\sigma =
(\cP, Z)$ using a central charge and a slicing, compatible in the sense that
$E\in \cl P(\phi)$ implies $Z([E])=m\exp(\pi i \phi)$ for some positive $m\in \bR$.
\par
An object $E\in\calD$ is called \emph{$\sigma$-semistable} if 
$E\in\cl P(\phi)$ for some $\phi\in\bR$. It is called 
\emph{$\sigma$-stable} if it is simple in $\cl P(\phi)$. 
This notion makes sense because any subcategory $\cl P(\phi)$ is abelian
if $\cl P=\{\cl P(\phi)\}_{\phi\in\bR}$ is a 
slicing compatible with $Z\in\Hom(K(\calD), \bC)$. 
\par
Let $\lambda \in \bC$ and suppose $0 < \epsilon = \Re(\lambda) \leq 1$.
We observe, and will use later, that $\sigma$-semistable objects $X$ in
the heart~$\calA$ (equivalently $Z$-semistable objects) with $1-\epsilon \leq
\phi(X) <(\leq) 1$ and those with $0<\phi(X)\leq (<)1-\epsilon$, for
$\epsilon\in(0,1)$, form a torsion pair $(\torsion_\lambda,\torsionfree_\lambda)$
in $\calA$ due to the Harder-Narasimhan condition.
\par
The space of stability conditions is a complex manifold 
$\Stab(\cD)$. There are two natural commuting actions: 
\begin{itemize}
  \item a left action by $\bC$, by rescaling the central 
  charge and tilting the heart, if $0 < \Re(\lambda) \leq 1$, 
  \[\lambda \cdot (\calA,Z) \=(\mu^\sharp_{\torsionfree_\lambda} \calA,e^{-\pi i \lambda} Z)\,,\]
  \item and a right action by $\Aut(\calD)$ via pullback,
    \[\Phi.(\calA,Z) \=\big(\Phi \calA,Z\circ [\Phi]^{-1})\big),\]
  where $[\Phi]$ is the map induced by $\Phi$ on $K(\calD)$.
  \end{itemize}
In particular the shift $[1]$ acts as $\lambda=1$. 
Note that the $\bC$-action does not change the notion of 
semistability and stability. 
\par 
We denote by $\Stab^\circ(\cD)$ be a connected component, 
specified by
the context. The stability manifold $\Stab(\cD)$ is tiled into
subsets $\Stab(\calA)$ of stability conditions supported on the heart~$\calA$.
The component $\Stab^\circ(\cD)$ is called \emph{finite type} if is the
union of $\Stab(\calA)$ over \emph{finite hearts}. It is called of \emph{tame type},
if the $\bC$-orbits of $\Stab(\calA)$ for all finite type hearts
cover $\Stab^\circ(\cD)$.
\par
We let $\Aut^\circ(\cD)$ be the subgroup of $\Aut(\cD)$ consisting on
autoequivalences of~$\cD$ that preserve the component $\Stab^\circ(\calD)$
and we define $\Nil^\circ(\calD) \subset \Aut^\circ(\cD)$ the subgroup
of \emph{negligible autoequivalences}, i.e.\ those that act trivially
on~$\Stab^\circ(\calD)$. We use fancy fonts like
\be\label{eq:pzcAut}
\pzAut^\circ(\calD) \= \Aut^\circ(\cD)/\Nil^\circ(\calD)
\ee
to denote the quotient groups by negligible autoequivalences. It's the
quotient spaces $\bC \backslash \Stab(\cD)/\pzAut(\cD)$ by these actions
that we want to compactify.

\par

%%%%%%%%%%%%%%%%%%%
\subsection{Quivers with potential, module and Ginzburg categories}
\label{subsec:quiver}
%%%%%%%%%%%%%%%%%%%

In this paper $(Q,W)$ is a quiver $Q=(Q_0,Q_1,s,t)$ with potential~$W$ 
(i.e., a formal sum of cycles) up to right-equivalence, see \cite{DWZ, KY} for
standard results. We assume that $(Q,W)$ has no loops and no 2-cycles, that the
set of vertices $Q_0$ and the set of arrows $Q_1$ are finite, and that 
the potential defines a bilateral ideal 
$\del W=\bra \partial_a W \mid a\in Q_1\ket\subset \bk Q$ 
such that the Jacobian algebra 
\[\cl J(Q,W) := \widehat {\bk Q}/\del W,\]
obtained by quotienting the completed path algebra by the ideal defined by
the potential, is finite dimensional. Note that in our case of interest,
$\widehat{\bk Q}/\del W =\bk Q/\del W$. For a ring $\cl J$, we denote by
$\Mod \cl J$ the abelian category of left modules and by $\modules \cl J$
the abelian category of finitely generated left modules. The
category $\modules \cl J(Q,W)$ is finite with simple
objects $\Sim(\modules \cl J(Q,W))=\{S_1,\dots, S_n\}$, where $n=|Q_0|$.
\smallskip\par
If $I\subset Q_0$ is a collection of vertices of $(Q,W)$, by 
$(Q_I,W_I)$ we mean the \emph{restriction} of $(Q,W)$ to $I$. It is another finite 
quiver with potential, possibly disconnected, defined by 
$(Q_I)_0=I$, $(Q_I)_1=\left\{a:i\to j\in Q_1\mid i,j\in I\right\}$, 
and with source, tail functions, and potential obtained 
by restriction from $(Q,W)$ to $I$. We call it a 
(full) subquiver. The complement of $I$ in $Q_0$ 
will be denoted $I^c$.
\par The \emph{mutation} of a quiver with potential $(Q,W)$ at a vertex $i$ is an 
operation that produces another quiver with the same set of vertices and new set
of arrow and new potential,  defined as follows. From $Q_1$, keep all arrow not
incident to $i$; replace any arrow $a$ with either $s(a)$ or 
$t(a)$ equal to $i$ with its opposite; add an arrow $[ab]:k_1\to k_2$ for any pair 
of consecutive arrow $a:k_1\to i$ and $b:i\to k_2$; finally, remove any two-cycles. 
The new potential is the formal sum of $W$ and $\sum_{a,b\in Q_1}[ab]b^*a^*$.
\smallskip\par
The \emph{Ginzburg algebra} of $(Q,W)$ is a dg algebra denoted 
$\Gamma(Q,W)$ and introduced in \cite{ginzburg, KY}. It does not depend 
on the mutation class of a quiver with potential.
\par
\begin{definition}
  The \emph{perfectly-valued derived category} $\pvd(\Gamma)$ 
  associated with a dg algebra $\Gamma$ is the subcategory of 
  the derived category $\calD(\Gamma)$ consisting on dg 
  modules with finite dimensional total cohomology.
\end{definition}
Once we fixed $(Q,W)$ and 
$I\subset Q_0$, we write $\cl J=\cl J(Q,W)$, $\Gamma=\Gamma(Q,W)$, and 
$\cl J_I=\cl J(Q_I,W_I)$, $\Gamma_I=\Gamma(Q_I,W_I)$. 
We have the following inclusion of 
triangulated categories, \cite{KY}
\[ \pvd(\Gamma) \subset \per(\Gamma) \subset \calD(\Gamma).\]
It is proven in \cite{KY} that the \emph{standard} t-structure with 
heart $\Mod\cl J$ in the derived category 
$\calD(\Gamma)$ restricts to $\per(\Gamma)$ and $\pvd(\Gamma)$, 
on which it defines a bounded t-structure with heart $\mod\cl J$, 
that we call \emph{standard} as well.
\par
The perfectly-valued derived category of the Ginzburg algebra of a quiver
with potential is 3-Calabi-Yau, 
which means that for any objects $E,F\in \pvd(\Gamma)$ there 
is a natural isomorphism  of $\bk$-vector spaces 
$\nu:\Hom(E,F)\stackrel{\sim}{\to}\Hom(F,E[3])^\vee$. 
Moreover, the simple 
objects in the standard heart $\mod\cl J$ are \emph{spherical} 
in $\pvd(\Gamma)$, see \cite[Lemma 4.4]{keller11} and \cite[Corollary 8.5]{kq}.
\par
If two quivers with potential $(Q,W)$ and $(Q',W')$ are 
related by mutations, then $\calD(\Gamma(Q,W))\simeq \calD(\Gamma(Q',W'))$ 
and $\pvd(\Gamma(Q,W))\simeq \pvd(\Gamma(Q',W'))$. 
Therefore $\modules\cl J(Q',W')$ is viewed as another 
heart of bounded t-structure of $\pvd\Gamma(Q,W)$. We 
recall that in general not all bounded t-structures have this shape.
\par
It is clear that any property of $\pvd(\Gamma)$ and $\modules(\cl J)$ also holds for $\pvd(\Gamma_I)$ and $\modules \cl J_I$.
\par
As explained in~\cite{kalckyang2}, the Ginzburg algebra
$\Gamma_I$ is isomorphic to $\Gamma/\Gamma e \Gamma$,
where $e=\sum_{i\in I^c}e_i$ is the idempotent in $\Gamma$ associated to the 
complement $I^c= Q_0\setminus I$. On the other hand the dg algebra $e\Gamma e$
is the endomorphism algebra of the projective module
$\Gamma e = \sum_{i\in I^c}\Gamma e_i$ in $\calD(\Gamma)$ and the Verdier 
quotient $\calD(\Gamma)/\calD(\Gamma_I)$ coincides with $\calD(e\Gamma e)$. Similarly $\cl J_I=\cl J/\cl J e \cl J$ and the 
quotient perfectly valued and abelian categories that will be 
relevant in the rest of the paper are $\pvd e\Gamma e$ and $\modules e\cl J e$:
\[\xymatrix{
 0 \ar[r] & \pvd\Gamma_I \ar[r]\ar[d]^{H^0} & \pvd \Gamma \ar[d]^{H^0} \ar[r] & \pvd e\Gamma e \ar[r] \ar[d]^{H^0} & 0\\
 0 \ar[r] & \modules \cl J_I \ar[r] & \modules \cl J \ar[r] & \modules e \cl J e \ar[r] &0.
}\]
The last line is part of a recollement of abelian categories, described for
instance in \cite{psarou}. 
\par
\medskip
In the rest of the paper we let 
\begin{itemize}
  \item $\calD_Q^3$ be the 3-Calabi-Yau triangulated category $\pvd\Gamma(Q,W)$.
\end{itemize}
The case of primary interest will be quivers \emph{of type $A_n$}, i.e.,
that  can be obtained with by finite sequence of mutations from the quiver
\[\xymatrix{
  A_n \,:=\,\bullet_1\ar[r]&\bullet_2\ar[r] &\cdots \ar[r]&\bullet_n},
\quad n\geq 1.
\]
Any restriction of a quiver of type $A_n$ is a union of quivers of 
type $A_m$'s. 
\par
Given an $A_n$-configuration, and the abelian category 
$\modules \cl J(A_n)$, we denote by $S_i$ the simple module associated 
with the vertex $i$. For $i\leq k$, we denote by $S_{i...k}$ the $\cl J(A_n)$-module 
defined inductively as the indecomposable fitting into the short exact sequence
\begin{equation}\label{S_ij}
		0 \rightarrow S_{k+1} \rightarrow S_{i\dots (k+1)} \rightarrow S_{i\dots k} \rightarrow 0.
\end{equation}
The $S_{i\dots k}$, are the projective resolutions $P_i$ of $S_i$ in the abelian 
subcategory $\bra S_i,\dots, S_{k}\ket$ which is of $A_{(k-i)}$-type by construction.

% !TeX root    = CompactStab.tex
%%%%%%%%%%%%%%%%%%%%%%%%%%%%%%%%%%%%%
\section{Stability manifolds for marked surfaces} \label{sec:DMS}
%%%%%%%%%%%%%%%%%%%%%%%%%%%%%%%%%%%%%

Decorated marked surfaces are one of the natural sources for quiver
categories. They are well-studied thanks also to the Bridgeland-Smith
isomomorphism \cite{BS15} to spaces of quadratic differentials with
simple zeros. We recall this result here, together with the generalization
in our previous paper \cite{BMQS}. This setup contains our main
case study, the $A_n$-quiver, and serves as motivation for the use of
quotient categories. Triangulations of decorated marked surfaces will
serve as reference point to pick out the right connected components
of stability manifolds needed in the later sections.

%%%%%%%%%%%%%%%%%%%%%%%%%%%%%%%%
\subsection{The stability manifold of a decorated marked surface}
\label{sec:wDMS}
%%%%%%%%%%%%%%%%%%%%%%%%%%%%%%%%

A natural way to construct quivers is from triangulations of surfaces
and we will use this formalism to keep track of connected components
of stability spaces and later the multi-scale stabilty conditions.
\par
A \emph{marked surface} $\surf=(\surf,\MM,\PP)$ consists of a connected
bordered differentiable  surface with a fixed orientation, with a finite set
$\MM=\{M_i\}_{i=1}^b$ of marked points on the boundary
$\partial\surf=\bigcup_{i=1}^b \partial_i$, and a finite set $\PP=\{p_j\}_{j=1}^p$
of punctures in its interior $\surf^\circ=\surf-\partial\surf$, such that each
connected component of $\partial\surf$ contains at least one marked point.
\par
A \emph{decorated marked surface $\surfo$} (abbreviated as \emph{DMS})
is obtained from
a marked surface $\surf$ by decorating it with a set $\Delta= \{z_i\}_{i=1}^r$ of
points in the surface interior~$\surf^\circ$. These points are called
\emph{finite critical points} or \emph{finite singularities}.
\par
An \emph{open arc} is an (isotopty class of) curve $\gamma\colon I\to\surfo$
such that its interior is in $\surfo^\circ\setminus\Tri$ and its
endpoints are in the set of marked points~$\MM$.
An \emph{(open) arc system} $\{\gamma_i\}$ is a collection of open arcs
on $\surfo$ such that there is no (self-)intersection between any of them
in~$\surfo^\circ \setminus\Tri$.
A \emph{triangulation~$\TT$} of $\surfo$ is a maximal arc
system of open arcs, which in fact divide $\surfo$ into triangles.
\par
The quiver $Q_\TT$  with potential $W_\TT$ associated to a
triangulation $\TT$ is constructed as follows. The vertices 
correspond to the open arcs in $\TT$, the arrows of $Q_\TT$ correspond to
oriented intersection between open arcs in~$\TT$, so that there is a 3-cycle
in $Q_\TT$ locally in each triangle, and the potential~$W_\TT$ is the sum of
all such $3$-cycles.
\par
For a fixed initial triangulation~$\TT_0$ we denote by $\Gamma_{\TT_0}
= \Gamma(Q_{\TT_0}, W_{\TT_0})$ the Ginzburg algebra associated with the
quiver associated with~$\TT_0$ we let $\calD^3_{Q_{\TT_0}} = \pvd(\Gamma_{\TT_0})$
or simply~$\DQ$ the corresponding $\CY_3$-category. Finally, we define
$\Stab^\circ(\DQ)$ to be the connected component of the space of Bridgeland
stability conditions on $\DQ$ containing stability conditions supported on 
the standard heart $\cl H_0$ of~$Q_{\TT_0}$.
\par
In this paper we fix throughout a DMS~$\surf$ of type~$A_n$.
It is a  disc with $b=1$ boundary component, which has
$n+3$ marked points, $r= n+1$ finite critical points in its interior,
and no punctures.
We use this reference surface and a reference triangulation on it to
define the component $\Stab^\circ(\cD_{A_n})$. Recall from
\cite[Theorem~9.9 and Section~12.1]{BS15} that the subgroup
$\Aut^\circ(\cD_{A_n}) \subset \Aut(\cD_{A_n})$ preserving a connected component
of $\Stab(\cD_{A_n})$ is an extension of~$\bZ/(n+3)\bZ$ by the spherical
twist group~$\ST(A_n)$.
\par
In this language the main theorem of Bridgeland-Smith (for a
general $\CY_3$-quiver category~$\DQ$ associated with a triangulation of
$\surfo$, see \cite{BS15} for the excluded cases) reads:
\par
\begin{theorem}[\cite{BS15,KQ2}] \label{thm:BS15_iso}
There is an isomorphism of complex manifolds
\be
K: \FQuad^\circ(\surf_{\Delta})   \to \Stab^\circ(\DQ)\,.
\ee
This map~$K$ is equivariant with respect to the action of the mapping
class group $\MCG(\surf_{\Delta})$ on the domain and of the automorphism
group $\pzAut^\circ(\cD)$ on the range. These groups act properly discontinuously
on domain, resp.\ range.
\end{theorem}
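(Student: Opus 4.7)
The plan is to follow the construction of Bridgeland--Smith with the refinement of King--Qiu. I would build the map $K$ explicitly first, then establish it is a local biholomorphism, then connectivity/equivariance, and finally proper discontinuity.

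For the construction of $K$, I would start with a finite-area quadratic differential $\phi \in \FQuad^\circ(\surf_\Delta)$ with simple zeros and fix its horizontal foliation. Standard results on quadratic differentials say that, away from a discrete locus, the horizontal separatrices cut $\surf_\Delta$ into horizontal strips and half-planes, whose combinatorics give a \emph{WKB triangulation} $\TT_\phi$ whose arcs are saddle connections transverse to the horizontal foliation. Sending $\TT_\phi$ to its quiver identifies $\DQ$ with $\pvd(\Gamma_{\TT_\phi})$ and furnishes a standard heart~$\calA_\phi$ whose simples $S_e$ are indexed by the edges $e$ of $\TT_\phi$. I would then set $Z_\phi(S_e) = \int_e \sqrt{\phi}$ (with a coherent branch choice fixed by a quadratic framing), extending $\bZ$-linearly to $K(\calA_\phi)$. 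The assignment $\phi \mapsto (\calA_\phi, Z_\phi)$ is the candidate for $K$; it is well-defined up to the choice of WKB triangulation, and I would check that two different choices are related by flips, which lift to simple tilts on the categorical side.

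For local bijectivity and holomorphicity, I would compare period coordinates. On $\FQuad^\circ$ the period map $\phi \mapsto \bigl(\int_{e}\sqrt\phi\bigr)_{e\in\TT_\phi}$ provides holomorphic charts. On $\Stab^\circ(\DQ)$, by Bridgeland's deformation theorem, sending a stability condition supported on the finite heart $\calA_\phi$ to the tuple of central charges of its simples provides holomorphic charts onto an open set of $\Hom(K(\calA_\phi),\bC)\cong\bC^n$. By construction $K$ identifies these charts, so it is holomorphic with holomorphic inverse where it is defined. To pass from local to global I would argue by connectivity of $\Stab^\circ$: any boundary one crosses by deforming $Z_\phi$ corresponds to a central charge approaching the real axis, which categorically is a simple tilt (Section~\ref{subsec:tilt}) and geometrically is a horizontal saddle connection degeneration (i.e.\ a flip of the WKB triangulation). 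Using Proposition~\ref{prop:tiltviasimpletilt} and the fact that mutations of $(Q_\TT,W_\TT)$ correspond to flips of~$\TT$, one checks that $K$ extends across these walls and stays a local biholomorphism. Combined with the description of the distinguished connected component (the one containing the standard heart $\calH_0$ of $Q_{\TT_0}$), this proves $K$ is an isomorphism onto $\Stab^\circ(\DQ)$.

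For equivariance, observe that a mapping class $\varphi \in \MCG(\surf_\Delta)$ sends a WKB triangulation $\TT_\phi$ to $\varphi(\TT_\phi) = \TT_{\varphi^*\phi}$, inducing an isomorphism of quivers with potential, hence a derived equivalence in $\pzAut^\circ(\DQ)$; this matches the two actions. The faithfulness/well-definedness modulo negligible autoequivalences uses that the spherical twist group $\ST(A_n)$ corresponds to Dehn twists about the edges together with the $\bZ/(n+3)\bZ$ coming from the rotation of the disc. Finally, for proper discontinuity, the key input is a \emph{length/mass proper function}: the sum of lengths of all saddle connections (equivalently, total mass of semistable objects in a compact set of phases) is continuous on $\FQuad^\circ$ and has proper level sets modulo the respective group. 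I expect the main obstacle to be precisely this last step: controlling that only finitely many elements of the (infinite) group $\MCG(\surf_\Delta)$ move a given differential into a fixed compact set. This is handled in \cite{BS15} via the systole-type estimate and the fact that isotopy classes of short saddle connections are only finitely many; I would import that argument directly rather than reprove it.
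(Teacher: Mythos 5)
This theorem is not proved in the paper at all: it is imported as a black box from \cite{BS15} (and its refinement in \cite{KQ2}), and the only piece the paper itself emphasizes is the exchange-graph isomorphism $\EGp(\surfo)\cong\EGp(\DQ)$ as the key combinatorial input. So there is no internal argument to compare your proposal against; what you have written is a reasonable top-level account of the Bridgeland--Smith proof itself, and the ingredients you list (WKB triangulation from the horizontal strip decomposition, period coordinates on both sides, wall-crossing by flips matching simple tilts, equivariance, properness via a systole estimate) are the right ones.

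Two imprecisions worth correcting if you wanted to actually carry this out. First, the central charge of the simple $S_e$ is the period $\int_{\gamma_e}\sqrt\phi$ over the \emph{dual saddle connection} $\gamma_e$ joining two zeros, not over the open arc $e$ of the WKB triangulation (the open arcs cross the horizontal strips; the saddle connections joining zeros are the dual closed arcs, as recalled in the paper after Lemma~\ref{lem:possibleV}). You conflate arcs with saddle connections in the second paragraph. Second, your description of the equivariance mechanism via ``Dehn twists about the edges'' is off: on a disc there are no nontrivial Dehn twists about essential simple closed curves. What carries the day is the exact sequence~\eqref{eq:ses.aut} (Theorem~9.9 of \cite{BS15}), under which the spherical twist group $\ST(A_n)\cong B_{n+1}$ matches the surface braid group of the $n+1$ decoration points (generated by half-twists exchanging adjacent decorations), and the cokernel $\bZ/(n+3)\bZ$ matches the rotation of the boundary marked points, i.e.\ the categorical shift. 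With those two adjustments your outline agrees with the cited proof.
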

\par
Here $\FQuad^\circ(\surf_{\Delta})$ is a space of framed quadratic differentials
with simple zeros at $\Delta$, whose definition we recall along with
the examples in Section~\ref{sec:BCGGMforAn}. Its generalization to
non-simple zeros motivates the notion of collapse and the use of quotient
categories, which we recall in Section~\ref{sec:BMQS}.
\par
As technical tool we introduce the \emph{exchange graph} $\EG(\surfo)$,
the directed graph whose vertices are the triangulations of~$\surfo$ and
whose edges are given by (forward) flips of the triangulation. The
\emph{exchange graph} $\EG(\calD)$ of a triangulated category is the directed
graph whose vertices are the finite hearts and whose edges are give by forward
tilts at simples in the heart. We denote by $\EGp(\surfo)$ the connected
component containing the initial triangulation~$\TT_0$ and by $\EGp(\DQ)$ the
connected component corresponding to the standard heart $\mod\cl J(Q_{\TT_0},
W_{\TT_0})$. A key step in the proof of Theorem~\ref{thm:BS15_iso} is the isomorphism 
\be
\EGp(\surfo) \,\cong\, \EGp(\DQ)
\ee
of exchange graphs.

%%%%%%%%%%%%%%%%%%%%%%%%%%%%%%%%
\subsection{Stability manifolds of certain quotient categories}
\label{sec:BMQS}
%%%%%%%%%%%%%%%%%%%%%%%%%%%%%%%%

Higher order zeros are modeled by the collapse of a subsurface
$\Sigma \subset \surfo$ in a DMS. We use this to deduce information on
certain components of the stability manifold of the quotient categories
$\DQ / \DQI$. We decompose $\Sigma$ into connected components $\Sigma_i$,
provide each boundary component of $\Sigma_i$ with an integer \emph{enhancement}
$\kappa_{ij}$. To match hypothesis with \cite{BMQS} we suppose throughout
that $\kappa_{ij} \geq 3$ and consider $\Sigma$ as a marked surface with
$\kappa_{ij}$ points on each boundary component.
\par
To topologically formalize the collapse of~$\Sigma$ we define a \emph{weighted DMS}
(wDMS for short) to be a DMS with a weight function $\w: \Delta \to \bZ_{\geq -1}$
where the total weight is required to be $||\w|| = 4g-4 + |\MM|+2b$.
Contracting~$\Sigma \subset
\surfo$ and replacing each boundary component by a decoration point in~$\Delta$
with weight $\w_{ij} = \kappa_{ij}-2$ defines a wDMS that we usually denote
by~$\colsur$. In the sequel (as in \cite{BMQS}) we \emph{restrict to the case
of no punctures $p=0$, no unmarked boundary components and
$\w: \Delta \to \bZ_{\geq 1}$.}
\par
To categorify the collapse we homotope the initial
triangulation~$\TT_0$ such that the arcs intersect the boundary of $\Sigma$
in the marked points and such that $\TT_0|_\Sigma$ is a triangulation of
this subsurface. In this way $\Sigma$ becomes a DMS with a triangulation
and we may form the $\CY_3$-category $\calD_3(\subsur)$. We define the Verdier
quotient category
\be
\calD(\colsur) \,:=\, \Dsan/\calD_3(\subsur)
\ee
\par
As in Section~\ref{sec:wDMS} there are two exchange graphs associated with this
situation, one based on ``flips'' and topology and the other based on tilts of hearts.
The isomorphism~\eqref{eq:EGcoliso} below between these graphs is one of reasons to
work with the quotient categories.
\par
A \emph{partial triangulation~$\AS$ of $\colsur$} is a collection of open arcs
that triangulates the subsurface of $\surfo$ whose complement is
homeomorphic to $\Sigma$, and such that each boundary component $c_{ij}$
of~$\Sigma$ is homeomorphic in $\colsur \setminus \AS$ to a $(\kappa_{ij}
= w_{ij}+2)$-gon, possibly with ends points identified.
\par
On the set of partial triangulation~$\AS$ there is an operation of
\emph{forward flip of an arc $\gamma \in \AS$}, defined by moving both endpoints
counterclockwise one edge bounding the subsurface of $\surfo \setminus (\AS \setminus
\{\gamma \})$ that contains~$\gamma$. This generalizes the usual notion of flip
of triangulations, see \cite[Figure~2]{BMQS}. We define the exchange graph
$\EG(\colsur)$ to be the (infinite) directed graph whose vertices are the
partial triangulations of the decorated surface~$\colsur$ and whose edges
are given by forward flips.
\par
\begin{definition} \label{def:CompatQuotType}
Let $\calV \subset \calD$ be a thick triangulated subcategory.
We say that a heart~$\calA$ of~$\calD$ is \emph{$\calV$-compatible}, if
$\calA \cap \calV$ is a Serre subcategory of~$\calA$.
\par
We call a heart $\ol{\calA}$ of $\calD/\calV$ \emph{of quotient type} if there
is a $\calV$-compatible heart~$\calA$ of~$\calD$ whose essential image in $\calD/\calV$
is~$\ol{\calA}$.
\end{definition}
\par
We define the
\emph{principal component} $\EGb(\colsur)$ to be the full subgraph of partial
triangulations that admit a refinement to a triangulation in $\EG^\circ(\surfo$), 
i.e.\ the full subgraph given by triangulations reachable by a finite number of flips
from~$\TT_0$. We define the
\emph{principal component} $\EGb(\calD(\colsur))$ to be the full subgraph of
$\EG(\calD(\colsur))$ consisting of hearts of quotient type that admit a
representative in the distiguished component $\EG^\circ(\calD(\surfo))$. It is
a priori not clear that these definitions yield connected components. This is
proven along with \cite[Theorem~5.9]{BMQS}, which moreover states that
\be \label{eq:EGcoliso}
\EGb(\colsur)\,\cong \,\EGb(\Dcol)
\ee
and that both graphs are $(m,m)$-regular.
\par
We now define the \emph{principal component of the stability manifold}
$\Stab(\Dcol)$ to be 
\be \label{eq:defStabbullet}
\Stas(\Dcol) \=\bC\cdot \bigcup_{\calH\in\EGb(\Dcol)} \Stab(\calH).
\ee
The terminology is justified by the following results:
\par
\begin{prop}[\cite{BMQS}]\label{prop:conn_comp}
The space $\Stab^\bullet(\Dcol)$ is union  of connected components of~$\Stab(\Dcol)$.
\end{prop}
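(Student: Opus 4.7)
The plan is to show that $\Stab^\bullet(\Dcol)$ is simultaneously open and closed in $\Stab(\Dcol)$, so that it is a union of connected components. Since $\Stab^\bullet(\Dcol)$ is $\bC$-invariant by construction and $\bC$ acts on $\Stab(\Dcol)$ by homeomorphisms, it suffices to verify the clopen property at representatives $\sigma=(\calH,Z)$ with $\calH\in\EGb(\Dcol)$.

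For openness, I would apply Bridgeland's local homeomorphism theorem to the quotient category $\Dcol$: a sufficiently small neighborhood $U$ of $\sigma$ in $\Stab(\Dcol)$ consists only of stability conditions whose heart lies between $\calH[-1]$ and $\calH[1]$, and is thus obtained from~$\calH$ by a finite sequence of simple forward or backward tilts at the simples whose phase crosses $0$ or $1$. By \cite[Theorem~5.9]{BMQS} together with the isomorphism \eqref{eq:EGcoliso}, the principal component $\EGb(\Dcol)$ is $(m,m)$-regular; in particular, the set of its vertices is closed under simple forward and backward tilts. Hence every heart encountered in $U$ lies in $\EGb(\Dcol)$, and so $U\subset \Stab^\bullet(\Dcol)$ after applying the $\bC$-action.

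For closedness, I show that the complement is open. Take $\sigma'=(\calK,Z)\in \Stab(\Dcol)\setminus \Stab^\bullet(\Dcol)$. Again by the local model, a sufficiently small neighborhood of $\sigma'$ only involves hearts obtained from $\calK$ by iterated simple tilts. If one such iterated tilt $\calK'$ belonged to $\EGb(\Dcol)$, then, since simple forward and backward tilts are mutual inverses and $\EGb(\Dcol)$ is full and closed under tilts in both directions, we could reverse the sequence of tilts to conclude $\calK\in\EGb(\Dcol)$, contradicting $\sigma'\notin \Stab^\bullet(\Dcol)$. Therefore a neighborhood of $\sigma'$ is disjoint from $\Stab^\bullet(\Dcol)$, which proves closedness.

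The main obstacle is the first step: one must be careful that Bridgeland's deformation argument, originally formulated for a triangulated category with its full Grothendieck group, carries through cleanly for the Verdier quotient $\Dcol=\Dsan/\Dsub$ and for hearts of quotient type in the sense of Definition~\ref{def:CompatQuotType}. In particular, one needs to verify that neighboring hearts encountered in a small deformation of a heart of quotient type are themselves of quotient type, so that the regularity of $\EGb(\Dcol)$ established in \cite{BMQS} can actually be invoked. Once this compatibility between the deformation theory on $\Dcol$ and the intermediate-tilt description of $\EGb(\Dcol)$ is in place, both openness and closedness follow from a single application of the tilt-closure property of $\EGb(\Dcol)$.
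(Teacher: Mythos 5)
The paper does not prove this statement; it is attributed directly to \cite{BMQS}, so I cannot compare your argument against the authors' own. That said, your clopen approach is the natural one and is almost certainly in the spirit of what BMQS does: the essential input is precisely the $(m,m)$-regularity of $\EGb(\Dcol)$ from \cite[Theorem~5.9]{BMQS}, which you correctly identify as encoding the closure of the principal component under simple forward and backward tilts, plus Bridgeland's deformation theory.

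However, two steps in your argument need more care than you give them. First, the passage from ``$\calH'$ is intermediate between $\calH$ and $\calH[1]$'' to ``$\calH'$ is obtained from $\calH$ by a finite sequence of simple tilts'' is not automatic: by the paper's own Proposition~\ref{prop:tiltviasimpletilt}, this decomposition requires the relevant torsion-free class (semistable objects of phase in $(1-\epsilon,1)$) to have only finitely many indecomposables. This holds in the $A_n$ setting but is exactly the finiteness hypothesis the paper flags elsewhere (``Currently we rely on the fact that hearts in $\calD_{A_n}^3$ have finitely many indecomposables''). Since Proposition~\ref{prop:conn_comp} is stated for general $\Dcol$, you should either invoke the discreteness guaranteed by the support property for finite hearts to bound the number of phases near $1$, or explicitly restrict to the setting where the relevant torsion-free classes are known to be finite. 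Second, your closedness argument is circular as phrased: to ``reverse the sequence of tilts'' from $\calK'$ back to $\calK$, you already need to know that $\calK$ is a finite heart connected to $\calK'$ in the exchange graph, which is not given a priori when $\sigma'$ lies in the unknown complement. The cleaner formulation is to note that openness of $\Stas(\Dcol)$ around any point with heart in $\EGb$ actually shows more: a small neighborhood of such a point meets \emph{only} hearts in $\EGb$. Hence if $\sigma'\notin\Stas$ had points of $\Stas$ arbitrarily close, then $\sigma'$ would lie in a neighborhood of some $\sigma\in\Stas$ covered entirely by $\EGb$-hearts (up to $\bC$-action), contradicting $\sigma'\notin\Stas$. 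This avoids having to tilt from $\calK$ at all. These are repairs rather than fatal flaws, and you already flag the first issue yourself at the end of your proof.
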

\par
Referring to Section~\ref{sec:BCGGMforAn} for the definition of framed quadratic
differentials we recall here the generalization of the Bridgeland-Smith isomorphism
that serves as motivation for definition of multi-scale stability conditions
using the comparison to compactification of strata, see Section~\ref{sec:BCGGMforAn}.
\par
\begin{theorem}[Theorem~1.1 of \cite{BMQS}] \label{thm:KrhoBihol}
There is an isomorphism of complex manifolds
\[
K: \FQuad^\bullet(\colsur)   \to \Stab^\bullet(\calD(\colsur))
\]
between the principal part of the space of Teichm\"uller-framed quadratic
differentials and the principal part of the space of stability conditions
on $\calD(\colsur)$.
\end{theorem}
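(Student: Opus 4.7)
The plan is to mimic the strategy of \cite{BS15} while respecting the quotient structure $\Dcol=\Dsan/\Dsub$, using the exchange-graph isomorphism \eqref{eq:EGcoliso} as the combinatorial scaffold that lets us transport the construction cell by cell.

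First, I would build the map $K$ locally. Fix a partial triangulation $\AS \in \EGb(\colsur)$ and let $\calH=\calH(\AS) \in \EGb(\Dcol)$ be the quotient-type heart corresponding to $\AS$ under \eqref{eq:EGcoliso}; the simples of $\calH$ are in bijection with the arcs of $\AS$. Given a Teichm\"uller-framed quadratic differential $\phi \in \FQuab(\colsur)$ whose horizontal strip/saddle-connection decomposition is compatible with~$\AS$ (this is an open subset $\FQuab_\AS(\colsur) \subset \FQuab(\colsur)$), define a central charge $Z_\phi$ on $K(\calH)$ by assigning to the simple $S_\gamma$ corresponding to an arc $\gamma \in \AS$ the period integral $\int_{\wt\gamma} \sqrt{\phi}$ along the saddle connection $\wt\gamma$ dual to~$\gamma$. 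Set $K(\phi):=(\calH,Z_\phi)$. The image lies in $\Stab(\calH)\subset \Stas(\Dcol)$ because $\AS$ only uses arcs not crossing the collapsed subsurface $\Sigma$, so the corresponding saddle connections miss the higher-order zeros, and positivity of imaginary parts follows from the standard orientation convention.

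Next I would check that the local definitions glue. A forward flip $\AS \rightsquigarrow \AS'$ corresponds by~\eqref{eq:EGcoliso} to a simple tilt $\calH \rightsquigarrow \calH'$, and on the overlap $\FQuab_\AS \cap \FQuab_{\AS'}$ the two recipes for the central charge differ by the linear change of basis expressing the flipped arc as the appropriate extension in $\calH$; this is exactly the formula in Section~\ref{subsec:tilt} for simples of $\mu^\sharp\calH$. Since the cells cover $\FQuab(\colsur)$ (any framed differential is compatible with some partial triangulation in $\EGb(\colsur)$) and $\Stas(\Dcol)$ (by the definition~\eqref{eq:defStabbullet}), this produces a well-defined holomorphic map $K\colon \FQuab(\colsur) \to \Stas(\Dcol)$. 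Holomorphicity on each cell is clear because periods are holomorphic coordinates on $\FQuab_\AS$ and central charges of simples are holomorphic coordinates on $\Stab(\calH)$.

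To show $K$ is a biholomorphism I would invert it cell by cell: a stability condition $(\calH,Z) \in \Stab(\calH)$ determines a Teichm\"uller-framed differential on $\colsur$ by prescribing its periods on the arcs of the dual partial triangulation, using the fact that after the collapse construction these periods uniquely determine a differential in the given framing class. Injectivity of $K$ on each cell then forces global injectivity because distinct cells of $\Stas(\Dcol)$ meet only in their boundaries where the construction patches coherently; surjectivity follows from the cell covering noted above together with Proposition~\ref{prop:conn_comp}, which ensures that no extra components of $\Stab(\Dcol)$ are missed.

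The main obstacle, and where the bulk of the technical work will lie, is verifying the support property for $Z_\phi$ on the \emph{quotient} category $\Dcol$: one must show that stable objects of $(\calH,Z_\phi)$ correspond to finite-length saddle trajectories on~$\colsur$ avoiding the collapsed subsurface, with masses bounded below away from zero uniformly over compact families. This requires a Harder--Narasimhan analysis on quotient-type hearts, carefully handling the fact that a lift to a $\calV$-compatible heart in $\Dsan$ may acquire unstable factors supported on $\Dsub$ that become zero in the quotient. Once the support property is established, the properly discontinuous action statements and the equivariance with respect to $\MCG$ and $\pzAut^\bullet$ follow by transporting the corresponding statements of Theorem~\ref{thm:BS15_iso} through the exchange-graph isomorphism.
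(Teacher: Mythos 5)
This statement is quoted verbatim as Theorem~1.1 of \cite{BMQS}; the present paper gives no proof of it and simply cites the earlier work. There is therefore no ``paper's own proof'' here to compare against, and the correct expectation for a reader of this manuscript is to consult \cite{BMQS}. That said, your sketch does capture the broad strategy used there and in \cite{BS15}: cell-by-cell construction indexed by the exchange-graph isomorphism~\eqref{eq:EGcoliso}, period integrals over (closed) saddle connections as central charges, compatibility under flips/tilts, and a careful treatment of the Harder--Narasimhan and support properties; the equivariance and proper discontinuity claims are indeed inherited through the exchange-graph correspondence.

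One geometric confusion in your write-up should be corrected, since it affects whether you are really proving the right thing. You claim the image of $K$ lands in $\Stab(\calH)$ because the saddle connections ``miss the higher-order zeros'' and you later speak of trajectories ``avoiding the collapsed subsurface.'' After the collapse that produces $\colsur$, the subsurface $\Sigma$ is gone: each boundary component has been replaced by a weighted decoration point, and these decoration points are precisely the locations of the higher-order zeros of the quadratic differential. The closed saddle connections dual to the open arcs of a partial triangulation $\AS$ begin and end at such zeros; they do not avoid them. The central charge of a simple is the period of $\sqrt{\phi}$ along such a saddle connection, and positivity of the imaginary part (so that $Z_\phi$ takes values in $\chalfplane$) comes from the branch and orientation conventions, exactly as in \cite{BS15}, not from staying away from the collapsed region. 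Correcting this does not change your overall plan, but it matters for the support property step, where you need the length spectrum of saddle connections incident to the higher-order zeros to stay bounded away from zero, not a spectrum that excludes them.
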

\par

%%%%%%%%%%%%%%%%%%%%%%%%%%%%%%
\subsection{Braid groups and spherical twists} \label{sec:STandBraid}
%%%%%%%%%%%%%%%%%%%%%%%%%%%%%%

Motivated by the relation to groups of autoequivalences given
in~\eqref{eq:ses.aut} and~\eqref{eq:MCG1structure} below
we recall a few basic properties of the braid groups $B_{n+1}$ on $n+1$
strands, see e.g.\ \cite[Section~9]{farbmarg}. The 
standard generators are the $\tau_i$ twisting the strands~$i$ and~$i+1$.
They satisfy the defining \emph{standard braid relations}.
\ba \label{eq:braidrel}
\tau_i\tau_j\tau_i\=\tau_j\tau_i\tau_j &\quad\text{if}\quad |i-j|=1,
\quad \text{and} \quad 
\tau_i\tau_j\=\tau_j\tau_i &\quad \text{if}\quad |i-j|\geq 2,
\ea
The \emph{pure braid group} is the kernel of the homomorphism recording
the strand permutation, i.e.\ sits in the exact sequence
\be \label{eq:PBBseq}
0 \to \PB_{n+1} \to B_{n+1} \to S_{n+1} \to 0 \,.
\ee
The center of~$B_{n+1}$ is cyclic, generated by the element
\be \label{eq:centerBn}
\theta_n = (\tau_1 \tau_2 \cdots \tau_n)^{n+1}
\ee
except for the case where of $n=1$, since $B_2$ is cyclic where thus
$\theta_2$ is the square of the generator. In all cases  the element~$\theta_n$
as defined above also belongs to the pure braid group $\PB_{n+1}$.
%\footnote{In fact,  $\PB_{n+1}$ splits over its center, but we probably
%have no use for this fact.}
As a special case of the Birman exact sequence we obtain 
\be
1 \to F_{n+1} \to \PB_{n+1} \to \PB_n \to 1\,,
\ee
where~$F_{n+1}$ is the free group on $n+1$ generators.
This exact sequence is split by adding the extra strand. Iterating this
we obtain for each~$r$ consecutive integers a natural homomorphism
\be \label{eq:phirn}
\varphi_{r,n}: \PB_{r+1} \to \PB_{n+1}
\ee
Via this homomorphism we define for $I = \{1,\ldots,r\}$ the elements
$\theta_{I,n} := \varphi_{r,n} (\theta_r) \in B_{n+1}$. These correspond to
a full rotation of a disc encircling precisely the points in~$I$.
\par
More generally we will define the braid group~$B_Q$ associated with a quiver~$Q$,
see \cite[Definition~10.1 and Proposition~10.4]{qiubraid16}. It is generated
by an element~$\wt\tau_i$ for each vertex of the quiver and defined by
the relations \ba \label{eq:braidrel2}
\wt\tau_i\wt\tau_j\wt\tau_i\=\wt\tau_j\wt\tau_i\wt\tau_j &
\quad\text{if}\quad |i-j|=1\\
\wt\tau_i\wt\tau_j\= \wt\tau_j\wt\tau_i &\quad \text{if}\quad |i-j|\geq 2, \\
R_i = R_j &\quad\text{for each cycle $1 \to 2 \to \cdots \to m \to 1$}\,,
\ea
where $R_i = \wt\tau_i \wt\tau_{i+1} \cdots \wt\tau_m\wt\tau_1 \cdots
\wt\tau_{i-1}$. In the case $Q=A_n$ we retrieve the above definition of
$B_{A_n} = B_{n+1}$. We will be most interested in the case that~$Q$ is
of type~$A_n$ though not necessarily equal to~$A_n$. Suppose the vertices
in the index set~$I$ form a subquiver of type $A_r$. Then the braid group
of the restricted quiver $B_{Q_I} \cong B_{r+1}$ and we let~$\theta_r$ be its
central element. Inclusion of strands again defines a natural homomorphism
$\varphi_{I,n}: B_{Q_I} \to B_{n+1}$ and we define in this more general context
$\theta_{I,n} := \varphi_{I} (\theta_r)$.
\par
Given a quiver~$(Q,W)$ and its Ginzburg algebra~$\Gamma$, we
let $\ST(\Gamma) \leq \Aut(\pvd(\Gamma))$ be the \emph{spherical twist group}
(see Seidel-Thomas \cite{seidelthomas})
of $\pvd(\Gamma)$, that is the subgroup generated by the set of
spherical twists $\Phi_{S}$ for all simples~$S$ of~$\Gamma$,
where the \emph{twist functor} $\Phi_S$ is defined by
\begin{equation}\label{eq:phi+}
    \Phi_S(X)\= \Cone\left(S\otimes\Hom^\bullet(S,X)\to X\right)\,.
\end{equation}
This uses that in the case $\calD = \DQ$, as a consequence of
\cite[Corollary~8.5]{kq}, all the simples in any heart of~$\calD$ are spherical.
\par
\begin{rem} \label{commentonmutation} {\rm
For a heart $\calA$ with simples $S_1,...,S_n$ listed in an order such that
$\dim(\Ext^1(S_j,S_i)) = 0$ for $j<i$ and $\dim(\Ext^1(S_i,S_j)) = 0$ for $j>i$
the mutated heart $\mu_{S_i}^\sharp(\calA)$ has simples
$$S_1,...,S_{i-1}, S_i[1],\Phi_{S_i}^{-1}(S_{i+1}),...,\Phi_{S_i}^{-1}(S_n)\,,$$
compare  \cite[Proof of Proposition~7.1]{BS15}. Moreover, 
\begin{equation}\label{PhiSA}\mu^\sharp_{S[1]}\mu_S^\sharp\calA \=\Phi^{-1}_S\calA\,.
\end{equation}}
\end{rem}
\par
We write $\ST(A_n)$ for spherical twist group of a quiver of type~$A_n$.
\par
\begin{prop}[\cite{seidelthomas, qiubraid16,Q3}]
There is an isomorphism $\ST(\Gamma)\cong B_Q$ between the twist groups,
sending the standard generators $\tau_i \to \Phi_{S_i}$ to the standard
generators. In particular the group $\ST(A_n)$  is isomorphic to the braid
group~$B_{n+1}$.
\end{prop}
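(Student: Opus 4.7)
The plan is a two-step strategy: first construct a surjective homomorphism $B_Q \to \ST(\Gamma)$ sending standard generators to standard generators, and then show it is injective. Surjectivity is immediate from the definition of $\ST(\Gamma)$ as the subgroup generated by the $\Phi_{S_i}$, so the real work lies in (a) checking the defining relations of $B_Q$ in~\eqref{eq:braidrel2} hold in $\Aut(\pvd(\Gamma))$ and (b) proving faithfulness.

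For well-definedness, one computes $\Ext^\bullet(S_i,S_j)$ using that the simples of the standard heart are spherical with known graded endomorphism rings determined by the quiver: between two simples $S_i, S_j$ corresponding to non-adjacent vertices of~$Q$, all Hom and Ext groups vanish, so the cone construction~\eqref{eq:phi+} makes $\Phi_{S_i}$ and $\Phi_{S_j}$ commute. For adjacent simples, the graded Hom space is one-dimensional in a single degree (together with its Serre dual, by the 3-CY property), and a direct computation using the cone formula reproduces the Seidel--Thomas braid relation $\Phi_{S_i}\Phi_{S_j}\Phi_{S_i} \cong \Phi_{S_j}\Phi_{S_i}\Phi_{S_j}$; I would cite \cite{seidelthomas} here rather than redo the cone calculus. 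The cycle relation $R_i = R_j$ is the new ingredient for quivers with potential, and I would invoke Qiu \cite{qiubraid16} whose proof uses that the iterated twist $R_i$ around a full $m$-cycle can be computed inside the subcategory generated by the cycle's simples, where a direct model calculation with the potential shows the value is independent of the chosen starting vertex~$i$.

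For injectivity, which is the main obstacle, the cleanest input comes from the stability condition / exchange graph technology already set up in the paper. The key point is that $\ST(\Gamma)$ acts freely and properly discontinuously on the connected component $\EGp(\DQ)$ in a way compatible with the braid group action on the corresponding component $\EGp(\surfo)$ of the graph of triangulations of the decorated surface~$\surfo$, via the identification $\EGp(\surfo) \cong \EGp(\DQ)$ recalled in Section~\ref{sec:wDMS}. A non-trivial braid word that mapped to the identity would have to preserve every heart along a path in $\EGp(\DQ)$ pointwise, which via the isomorphism would force the corresponding mapping class of~$\surfo$ to be trivial; the braid group $B_Q$ then embeds by the faithful action on the tagged/ordinary exchange graph. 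This argument is the content of \cite{qiubraid16, Q3}, and for the $A_n$-case it also follows from the original faithfulness theorem of Seidel--Thomas, where one alternatively uses the faithful action on the spherical object graph or on the derived category of coherent sheaves on an $A_n$-resolution.

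Putting this together, the composition $B_Q \to \ST(\Gamma) \to \Aut(\EGp(\DQ))$ factors a faithful homomorphism through the surjection $B_Q \twoheadrightarrow \ST(\Gamma)$, forcing the surjection to be an isomorphism. The specialization to $A_n$ is then just the identification $B_{A_n} = B_{n+1}$ recorded right after~\eqref{eq:braidrel2}. The only step I would worry about presenting in detail is verifying the cycle relation, since unlike the two-term braid relation it is genuinely sensitive to the potential~$W$ and not just to the underlying graph of~$Q$; everything else is either a direct cone computation or a citation to the faithfulness results.
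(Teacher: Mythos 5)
The paper gives no proof of this proposition; it is stated as a recollection of results from \cite{seidelthomas, qiubraid16, Q3}, exactly the references you cite. So the right comparison is against those sources, and at the level of structure your sketch is on target: surjectivity is definitional, well-definedness reduces to the Seidel--Thomas cone computation for the length-three braid relations, and the cycle relation and faithfulness are the substantial contributions of Qiu's work. That split of labor is accurate.

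Two imprecisions in the injectivity step deserve flagging. First, the claim that $\ST(\Gamma)$ acts \emph{freely} and properly discontinuously on $\EGp(\DQ)$ is stronger than what you need and is not obviously true: what the proof actually requires is \emph{faithfulness} of the action, and a priori hearts in the exchange graph could have nontrivial stabilizers (indeed on the closely related stability manifold the stabilizers are finite extensions of the center, cf.\ Lemma~\ref{le:propdisc}, not trivial). Second, the sentence ``the braid group $B_Q$ then embeds by the faithful action on the tagged/ordinary exchange graph'' states the conclusion as the justification. The actual content is a faithfulness theorem for the braid group (as a mapping class group of the decorated disc) acting on the arc/exchange complex, which is the nontrivial input from surface topology (Alexander's method, or Qiu's direct argument in the decorated setting), combined with the equivariance of the isomorphism $\EGp(\surfo) \cong \EGp(\DQ)$ relating the mapping-class action to the spherical-twist action. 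Once you replace ``free'' by ``faithful'' and make explicit that the surface-side faithfulness is the external ingredient being imported, the factorization $B_Q \twoheadrightarrow \ST(\Gamma) \to \Aut(\EGp(\DQ))$ indeed forces the first map to be injective and hence an isomorphism.
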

\par
We now apply this to understand the groups of autoequivalences for
$\calD = \pvd(\Gamma)$. By \cite[Theorem~9.9]{BS15} there is an exact sequence 
\be\label{eq:ses.aut}
    1\to\mathpzc{ST}(\DQ) \to \mathpzc{Aut}^\circ(\DQ) \to \MCG(\surf) \to 1,
\ee
where $\mathpzc{ST}(\DQ)$ is the quotient of $\ST(\DQ)$ by its subgroup
of negligible automorphisms. In the special case $\DQ = D^3_{A_n}$
the mapping class group is  $\MCG(\ol{\Delta},\MM_{n+3}) \cong \bZ/(n+3)$,
so there is an exact sequence
\be \label{eq:MCG1structure}
1 \to B_{n+1} \to \mathpzc{Aut}^\circ(\calD^3_{A_n})  \to \bZ/(n+3) \to 1
\ee
see e.g.\ \cite[Section~12.1]{BS15},
\par
In Section~\ref{subsec:complexstr} we need the following action on
Grothedieck groups to control the effect of the action of $\theta_{I,n}$.
\par
\begin{lem}[{\cite[Section 4]{IkedaAn}}]On $K(\DQ)$, a spherical twist
$\Phi_{S_i}$ induces a group homomorphism $[\Phi_i]$ defined by 
\be \label{eq:Phiinduced}
[\Phi_i]([E])\=[E]-\chi([S_i],[E])[S_i],
\ee
where $\chi(\cdot,\cdot)$ denotes the Euler pairing. 
\end{lem}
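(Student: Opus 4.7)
The plan is to read off the identity directly from the distinguished triangle that defines the twist functor, combined with the bookkeeping of the Euler pairing.

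First I would apply the defining triangle from~\eqref{eq:phi+},
\[
S_i\otimes\Hom^\bullet(S_i,E)\longrightarrow E\longrightarrow\Phi_{S_i}(E)\longrightarrow S_i\otimes\Hom^\bullet(S_i,E)[1],
\]
and pass to $K(\DQ)$. Additivity of the class function on distinguished triangles gives
\[
[\Phi_{S_i}(E)] \= [E]\m\bigl[S_i\otimes\Hom^\bullet(S_i,E)\bigr].
\]
This reduces the statement to identifying the right-most class with $\chi([S_i],[E])[S_i]$.

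Next I would unpack $\Hom^\bullet(S_i,E)=\bigoplus_{k\in\bZ}\Hom(S_i,E[k])[-k]$ as a graded $\bk$-vector space. Tensoring with $S_i$ commutes with passage to Grothendieck classes, and the class of a shifted copy $V[-k]\otimes S_i$ equals $(-1)^k\dim_{\bk}V\cdot[S_i]$. Summing over $k$ gives
\[
\bigl[S_i\otimes\Hom^\bullet(S_i,E)\bigr]\=\sum_{k\in\bZ}(-1)^k\dim_{\bk}\Hom(S_i,E[k])\cdot[S_i]\=\chi([S_i],[E])\,[S_i].
\]
Substituting this into the previous display yields the claimed formula.

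The only point that needs a brief justification is that this last sum is finite so that $\chi([S_i],[E])$ is well-defined. This follows from $\DQ=\pvd(\Gamma)$ being a category of perfectly-valued dg modules: $\Hom^\bullet(S_i,E)$ has finite-dimensional total cohomology, hence only finitely many of the graded pieces are nonzero. There is no real obstacle in the argument; the statement is essentially a direct translation of the cone definition of $\Phi_{S_i}$ into the Grothendieck group, and the 3-Calabi-Yau hypothesis plays no role beyond ensuring that the Euler pairing $\chi$ is a well-defined bilinear form on $K(\DQ)$.
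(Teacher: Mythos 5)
Your proof is correct and follows the standard derivation: pass the defining triangle of the twist functor to the Grothendieck group, expand $[S_i\otimes\Hom^\bullet(S_i,E)]$ degree by degree, and recognize the Euler characteristic. The paper does not reprove this lemma but cites \cite[Section 4]{IkedaAn}, and what you have written is essentially that source's argument. One peripheral inaccuracy in your closing remark: the well-definedness of $\chi$ on $K(\DQ)$ does not come from the 3-Calabi-Yau property but from the same finite-dimensionality of total cohomology in $\pvd(\Gamma)$ that you already invoked; what CY$_3$ actually buys is the anti-symmetry $\chi(A,B)=-\chi(B,A)$, which is indeed irrelevant here, so your conclusion that the CY hypothesis plays no role in this lemma stands.
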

\par

% !TeX root    = CompactStab.tex

%%%%%%%%%%%%%%%%%%%%%%%%%%%%%%%%%%%%%%%
\section{Multi-scale stability conditions} \label{sec:multiscale}
%%%%%%%%%%%%%%%%%%%%%%%%%%%%%%%%%%%%%%%

We start with a definition that makes sense for general triangulated
categories with finite rank Grothendieck group. We will then define
a $\bC$-action and a notion of plumbing stability conditions that
will be used to give a topology on the set of multi-scale stability
conditions. In all these steps we have to be much more restrictive,
essentially restricting to $\calD = \calD^3_{A_n}$. We indicate the
technical difficulties needed to overcome in order to generalize
to $\CY_3$-quiver categories or beyond.
\par
\begin{definition} \label{def:mstab}
Let~$\calD$ a triangulated category with $\mathrm{rank}(K(\calD)) < \infty$.
A \emph{multi-scale stability condition} on~$\calD$ consists of an
equivalence class of the following data:
\begin{itemize}[nosep]
\item a \emph{multi-scale heart} $\cA_\bullet = (\cA_i)$, i.e., a
collection  $\cA_L \subset \cdots \cA_1 \subset \cA_0$
of abelian categories, and 
\item a \emph{multi-scale central charge}, i.e., a collection
$Z_\bullet = (Z_i)_{i=0}^L$ of non-zero $\Z$-linear maps on the
%Grothendieck groups $Z_i: K_i := K(\cA_i) \to \bC$,
Grothendieck groups $Z_i: K(\cA_i) \to \bC$,
\end{itemize}
with the property that
\begin{itemize}
\item the abelian category $\cA_i$ is generated by the non-zero
  objects~$E$ in $\cA_{i-1}$ with $Z_{i-1}(E)=0$ for all $i \geq 1$,
\item the abelian category $\cA_i$ is a heart of~$\calV_i$, which is
defined as $\calV_0 = \calD$ and for all $i \geq 1$ as the thick
triangulated subcategory of~$\calV_{i-1}$ generated by~$\calA_i$,
\item the map $Z_i$ factors through $K_{i-1} := \Ker(Z_{i-1})$,
\item the induced heart $\ol{\cA}_i = \cA_i/\cA_{i+1}$ 
together with the induced central charge $\ol{Z_i}: K(\calV_i/\calV_{i+1}) \to \bC$
form a stability condition in the usual sense on $\calV_i/\calV_{i+1}$
for all $i=0,\ldots,L$.
\end{itemize}%\par
Two multi-scale stability conditions $(\cA_\bullet,Z_\bullet)$ and
$(\cA_\bullet',Z_\bullet')$ are \emph{equivalent}, if
\begin{itemize}[nosep]
\item[i)] there is equality of triangulated categories $\cV_i = \cV_i'$
for $i=0,\ldots L$,
\item[ii)] the induced stability conditions $(\ol{A_i},\ol{Z}_i)$
and $(\ol{A_i}',\ol{Z}_i')$ are projectively equivalent
for $i=1,\ldots,L$, and are equal for $i=0$.
\end{itemize}
\par
Two multi-scale stability conditions $(\cA_\bullet,Z_\bullet)$ and
$(\cA_\bullet',Z_\bullet')$ are \emph{projectively equivalent} if the
projective equivalence in~ii) above holds for $i=0,\ldots,L$.
\end{definition}
\par
We write $[\calA_\bullet,Z_\bullet]$ for an equivalence class, and 
$(\calA_\bullet,Z_\bullet)$ for a representative of a multi-scale 
stability condition. Moreover we denote by $\cl V_\bullet$ the collection 
of nested triangulated subcategories $(\cl V_i)$ defined by 
$(\calA_\bullet,Z_\bullet)$. Sometimes write $\calV_i^Z$ for the categories~$\calV_i$
defined above to indicate the dependence on~$Z_\bullet$.
\par
The definition relies on the following lemma for the quotient hearts to
be meaningful.
\par
\begin{lem}\label{lem:serrenessV} The subcategory $\calA_{i+1}$ is Serre in $\calA_i$
and $\calA_{i+1} = \calV_{i+1} \cap \cA_i$ for all~$i$, i.e., $\calA_{i+1}$ is 
$\calV_{i+1}$-compatible in the sense of Definition~\ref{def:CompatQuotType}.
In particular, the inclusion $\iota: \calV_{i+1}\to \calV_i$ is t-exact with respect
to~$\calA_{i+1}$ and~$\calA_i$, and ~$\calA_i$ induces a quotient heart
in $\calV_i /\calV_{i+1}$. Moreover, $K(\calV_i/\calV_{i+1})=K_i/K_{i+1}$ so that~$Z_i$
descends to~$\ol{Z_i}$, as required.
\end{lem}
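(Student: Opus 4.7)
The plan is to reduce the lemma to an identification of $\calA_{i+1}$ as a specific Serre subcategory of $\calA_i$, from which the other claims follow via standard t-structure machinery. The central input is the assumption, built into Definition~\ref{def:mstab}, that $(\ol{\calA}_i,\ol{Z}_i)$ is a Bridgeland stability condition on $\calV_i/\calV_{i+1}$, combined with the finiteness of the hearts we work with.

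First I would establish that $\calA_{i+1}$ equals the Serre subcategory of $\calA_i$ spanned by those simples $S$ with $Z_i(S)=0$. Since $\ol{Z}_i$ is a stability function with values in $\chalfplane$, which excludes $0$, a simple $S$ of $\calA_i$ has $Z_i(S)=0$ precisely when its image in $\ol{\calA}_i$ vanishes, and otherwise $Z_i(S)\in\chalfplane$. Because $\chalfplane\cup\{0\}$ is closed under positive integer combinations, with $0$ occurring only when all summands vanish, additivity of $Z_i$ on composition series forces any $E\in\calA_i$ with $Z_i(E)=0$ to have all composition factors among the $Z_i$-vanishing simples. The abelian subcategory generated in $\calA_i$ by such $E$'s is therefore exactly the Serre subcategory spanned by those simples, which yields Serre-ness of $\calA_{i+1}$ in $\calA_i$ at once.

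Next I would prove $\calA_{i+1}=\calV_{i+1}\cap\calA_i$. The inclusion $\subseteq$ is immediate from the construction of $\calV_{i+1}$. For the other direction I would use the general statement that for any Serre subcategory $\calS$ of a heart $\calA$ of a triangulated category $\calV$, the thick triangulated subcategory $\bra\calS\ket$ satisfies $\bra\calS\ket\cap\calA=\calS$. With the cohomology functors $H^j:\calV\to\calA$ of the t-structure, an easy induction using long exact sequences and Serre-ness shows that $\{E\in\calV:H^j(E)\in\calS \text{ for all } j\}$ is thick triangulated and contains $\calS$, hence contains $\bra\calS\ket$. Any object of $\bra\calS\ket\cap\calA$ is then concentrated in degree zero with $H^0$ in $\calS$, so it lies in $\calS$.

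The remaining claims are then routine. The t-exactness of $\iota:\calV_{i+1}\to\calV_i$ follows because $\calA_{i+1}\subset\calA_i$ forces the aisles of $\calV_{i+1}$ to map into those of $\calV_i$. That $\ol{\calA}_i$ is a heart of $\calV_i/\calV_{i+1}$ is the standard consequence of $\calV_{i+1}$-compatibility, as used earlier in the recollement of Section~\ref{subsec:quiver}. The Grothendieck group identification $K(\calV_i/\calV_{i+1})=K_i/K_{i+1}$ reduces to the Verdier quotient formula $K(\calV_i/\calV_{i+1})=K(\calV_i)/K(\calV_{i+1})$ together with the heart isomorphism $K(\calA_j)\simeq K(\calV_j)$, and the vanishing of $Z_i$ on $\calA_{i+1}$ established in the first step then lets $Z_i$ descend to $\ol{Z}_i$. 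The main obstacle is the first step: the cleanest argument for Serre-ness leans on $\ol{Z}_i$ being a stability function, which is part of the hypothesis but implicitly presumes that $\ol{\calA}_i$ already makes sense as a heart. The logical shape of the proof is therefore to read Definition~\ref{def:mstab} as positing a configuration whose internal consistency is certified by this lemma, and to use only the pointwise property of $Z_i$ on simples until after Serre-ness has been secured.
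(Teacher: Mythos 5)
Your proof is correct and follows essentially the same outline as the paper's: Serre-ness from the strict convexity of the sector $\chalfplane\cup\{0\}$ and additivity of $Z_i$, then the identification $\calA_{i+1}=\calV_{i+1}\cap\calA_i$, then the standard consequences for the t-structure and Grothendieck groups. The one place you take a genuinely different route is the inclusion $\calV_{i+1}\cap\calA_i\subseteq\calA_{i+1}$: the paper disposes of this in one line by observing that $Z_i(X)=0$ for every $X\in\calV_{i+1}$ (since $K(\calV_{i+1})$ is generated by classes in $\calA_{i+1}$, on which $Z_i$ vanishes) and then reusing the first step, whereas you prove the purely structural fact that $\bra\calS\ket\cap\calA=\calS$ for a Serre subcategory $\calS$ of a heart $\calA$, via the observation that $\{E : H^j(E)\in\calS\text{ for all }j\}$ is thick. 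That structural characterization is exactly what the paper invokes (via references to Antieau and Chuang--Rouquier) for the \emph{later} claims of the lemma about t-exactness and the quotient heart; you have effectively front-loaded it, which is a little less economical but makes the logical dependence on $Z_i$ minimal and is no less rigorous. Your closing remark about the apparent circularity in reading Definition~\ref{def:mstab} (one needs $\ol{\calA}_i$ to make sense to invoke that $\ol{Z}_i$ is a stability function, yet Serre-ness is what makes $\ol{\calA}_i$ meaningful) is a real subtlety that the paper leaves implicit; the intended reading, as you indicate and as the Introduction states more explicitly, is that $Z_i$ maps the simples of $\calA_i$ to $\chalfplane\cup\{0\}$, and the lemma then certifies the coherence of the remaining data.
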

\par
\begin{proof}
Serreness of $\calA_{i+1}\subset\calA_i$  follows from the additivity of $Z_i$ on
short exact sequences and the fact that~$Z_i$ takes values in a strictly convex
sector in $\C$. The second statement follows from the observation that
$Z_i(X) = 0$ for any $X \in \calV_{i+1}$. Serreness of $\calA_{i+1}\subset \calA_i$ 
guarantees that $\calV_{i+1}$ consists on objects of $\calV_i$ whose cohomology 
with respect to $\calA_i$ is concentrated in $\calA_{i+1}$, and that the t-structure 
restricts, so the next 
%The third 
claim follows from
\cite[Proposition~2.20]{antieau} or \cite[Lemma~3.3]{chuang_rouquier}. For the
last observe that $K_{i+1}$ is the image of $\iota_*:K(\calA_{i+1})\to K_i$ and
that the right exact sequence $K(\calA_{i+1})\to K(\calA_i)\to K(\calA_i/\calA_{i+1})
\to 0$ is used to compute the Grothendieck group of the quotient category.
\end{proof}
\par
Let $\MStab(\calD)$ be the set of all multi-scale stability conditions on~$\calD$. 
The integer~$L$ in Definition~\ref{def:mstab} will be referred to as the
number of \emph{levels below zero} of the stability condition. A usual stability
condition has~$L=0$.
\par
\medskip
\paragraph{\textbf{Reachability}}
We now fix a component $\Stab^\circ(\calD)$ of the stability manifold of~$\calD$.
If $\calD = \DQ_3$ is a quiver category we use an initial triangulation~$\TT_0$
of a DMS as in Section~\ref{sec:wDMS} to single out this components. 
\par
A multi-scale stability condition  $(\cA_\bullet,Z_\bullet)$ is called
\emph{reachable} if the top level heart~$\calA_0$ supports stability
conditions in $\Stab^\circ(\calD)$. We denote the set of all reachable
multi-scale stability conditions on~$\calD$ by $\MStab^\circ(\calD)$, and
the set of reachable multi-scale stability conditions with the same $\cl V_\bullet$
by $\MStab^\circ(\calD, \cl V_\bullet)$.
\par
In the next section, we will informally call a multi-scaled stability 
conditions with at least one level below zero a \emph{boundary 
point}, and finally prove this is actually the case.
\par
\medskip
\paragraph{\textbf{Groups of autoequivalences}} For general~$\calD$ we
define the group $\Aut(\calD,\calV)$ to be the autoequivalences that
stabilize~$\calV$.
For $\calD = \DQ$ we use bullets to denote those autoequivalences that moreover
stabilize the principal components defined in Section~\ref{sec:BMQS}. 
\par
\begin{lemma} \label{le:propdisc}
The factor group
\be
\pzAut_{\lift}(\calD/\calV) \,:=\,  \pzAut^\bullet(\calD,\calV)/\pzAut^\bullet(\calV)
\ee
acts properly discontinuously on $\Stab^\bullet(\calD/\calV)$.
\par
The stabilizer $H_{[\sigma]} \subset \pzAut_{\lift}(\calD/\calV)$ of a projectivized
stability condition $[\sigma]$ is a finite extension of a subgroup~$Z$ in the center
of~$\pzAut_{\lift}(\calD/\calV)$. In case $\calD = \calD^3_{A_n}$ the group~$Z$
is the center, generated by $\theta_n$ defined in~\eqref{eq:centerBn}.
\end{lemma}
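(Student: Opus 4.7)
The plan is to prove both statements by transferring the problem to the space of framed quadratic differentials via the isomorphism of Theorem~\ref{thm:KrhoBihol}, and then invoking classical results on mapping class group actions on these spaces.

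\textbf{Proper discontinuity.} Under the isomorphism $K \colon \FQuad^\bullet(\colsur) \to \Stab^\bullet(\calD/\calV)$, the $\pzAut_{\lift}(\calD/\calV)$-action on the target corresponds to an action of a subquotient of the mapping class group of $\colsur$ on the source. The strategy is to factor the action as
\[
\pzAut_{\lift}(\calD/\calV) \longrightarrow \MCG(\colsur) \longrightarrow \Aut\bigl(\FQuad^\bullet(\colsur)\bigr),
\]
using the quotient-category analogue of the sequence~\eqref{eq:ses.aut}: namely $\pzAut^\bullet(\calV)$ corresponds to mapping classes supported on the collapsed subsurface and therefore disappears in the induced action on $\colsur$. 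Since $\MCG(\colsur)$ is known to act properly discontinuously on the moduli of quadratic differentials (this is the quotient-category version of the statement in Theorem~\ref{thm:BS15_iso}, and is classical in flat geometry), proper discontinuity of $\pzAut_{\lift}(\calD/\calV)$ follows provided the kernel of the first arrow is finite---which is automatic, as both groups are (extensions involving) braid groups and twist groups in the $A_{n}$-setting.

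\textbf{Stabilizer structure.} I would split this into two steps. First, show that the subgroup $Z$ generated by $\theta_{n}$ lies in $H_{[\sigma]}$: using the induced action~\eqref{eq:Phiinduced} on $K(\calD_{A_{n}}^{3})$ one computes that $\theta_{n} = (\tau_{1}\cdots\tau_{n})^{n+1}$ acts as a uniform shift on the Grothendieck group (the braid-group centre corresponds to a power of $[1]$ in categorifications of this type, cf.\ \cite{IkedaAn}); consequently its effect on any central charge is multiplication by a fixed scalar $e^{-\pi i \lambda}$, and hence is trivial after projectivizing by the $\bC$-action. Second, show $H_{[\sigma]}/Z$ is finite. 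Passing to $\bC\backslash \Stab^\bullet$, the class $[\sigma]$ corresponds via $K$ to a projectivized quadratic differential on $\colsur$, and its stabilizer in $\MCG(\colsur)$ is the automorphism group of this flat structure, which is finite by standard arguments on finite symmetry of half-translation surfaces. Combining with centrality (every element commutes with the scalar $\bC$-action that $Z$ induces) gives the claimed finite extension.

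\textbf{Main obstacle.} The principal technical difficulty is pinning down the image of $\pzAut_{\lift}(\calD/\calV)$ inside $\MCG(\colsur)$ and verifying that the centre of $\pzAut_{\lift}$ in the $A_{n}$-case is exactly $\langle \theta_{n}\rangle$, not something larger coming from the~$\bZ/(n+3)$-rotation factor in~\eqref{eq:MCG1structure}. A careful analysis of the quotient $\pzAut^\bullet(\calD,\calV)/\pzAut^\bullet(\calV)$ is needed, together with the observation that $\theta_{n}$ is already built from spherical twists and hence belongs to the \emph{lifted} part, while the rotation generically acts non-trivially on $[\sigma]$. The second subtlety is controlling the kernel of $\pzAut_{\lift}\to\MCG(\colsur)$, for which the quotient-category version of \cite[Theorem~9.9]{BS15} (or its proof) seems to be required.
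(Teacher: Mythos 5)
Your high-level strategy matches the paper's: both reduce to the geometric side via the isomorphism of Theorem~\ref{thm:KrhoBihol} and appeal to BMQS. However, two of your steps as written contain gaps.

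First, in your proper-discontinuity argument you factor the action as
$\pzAut_{\lift}(\calD/\calV) \to \MCG(\colsur) \to \Aut(\FQuad^\bullet(\colsur))$
and claim finiteness of the kernel of the first arrow is ``automatic.'' It is not: by the quotient analogue of~\eqref{eq:ses.aut}, the kernel of $\pzAut_{\lift}(\calD/\calV) \to \MCG(\colsur)$ is (an image of) the spherical twist group of the quotient, i.e.\ a braid group, which is infinite. Proper discontinuity does not descend from $\MCG(\colsur)$ through an infinite kernel. What actually makes this work---and what the BMQS proof the paper cites does---is that $\pzAut_{\lift}$ is identified with the \emph{decorated} mapping class group $\MCG(\colsur_\Delta)$, which acts as a deck group on the Teichm\"uller-framed space $\FQuad^\bullet(\colsur)$ and therefore properly discontinuously on it. The undecorated $\MCG(\colsur)$ plays no direct role.

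Second, and for the same reason, your finiteness argument for $H_{[\sigma]}/Z$ does not close. You observe that the image of $H_{[\sigma]}$ in $\MCG(\colsur)$ stabilizes a projectivized flat structure and is therefore finite. But this says nothing about $H_{[\sigma]}\cap\ker(\pzAut_{\lift}\to\MCG(\colsur))$, which is exactly where the infinite cyclic piece lives---indeed the twist $\theta_n$ you correctly place in $H_{[\sigma]}$ sits in that kernel. So you have bounded the wrong quotient. The paper's route avoids this: it uses proper discontinuity on the \emph{unprojectivized} $\Stab^\bullet(\calD/\calV)$ to get a finite stabilizer of $\sigma$ itself, so that after passing to a finite-index subgroup of $H_{[\sigma]}$ every element $g$ satisfies $g\cdot\sigma' = \lambda_g\cdot\sigma'$ for $\sigma'$ in a neighborhood of $\sigma$; since the $\bC$-action commutes with all autoequivalences, each such $g$ commutes with everything locally, hence is central by analytic continuation. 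Identifying the center of $\pzAut_{\lift}$ in the $A_n$-case as $\langle\theta_n\rangle$ is then a braid-group fact (Section~\ref{sec:STandBraid}). Your computation that $\theta_n$ acts as a shift is correct and in fact appears later in the paper's proof of Proposition~\ref{prop:nbdhstab}, but by itself it gives containment $Z\subset H_{[\sigma]}$, not the needed bound on the cokernel; for that you need the centrality argument above (or an explicit analysis of the kernel you flagged as your ``second subtlety'').
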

\par
\begin{proof} 
The first statement is shown in the proof of \cite[Theorems~8.1, 8.2]{BMQS}, 
summarized here in Theorem~\ref{thm:KrhoBihol}, 
(see in particular the part about the orbifold structure, together with Equation~(5.7)). 
\par
For the second statement we may pass, thanks to the first statement, to the finite
index subgroup of the stabilizer $H_{[\sigma]}$ of a projective stability condition
that acts trivially on a neighborhood of the unprojectivized~$\sigma$. 
As in the proof of \cite[Theorems~8.2]{BMQS}, since the action of~$\bC$
and $\pzAut_{\lift}(\calD/\calV)$ commute, the second statement follows. The last
claim is a restatement of braid group properties from Section~\ref{sec:STandBraid}.
\end{proof}
\par

%%%%%%%%%%%%%%%%%
\subsection{Numerical data of multi-scale stability
conditions of type~$A_n$}\label{sec:numdata}
%%%%%%%%%%%%%%%%%%

From here on we restrict to the case $\calD = \calD^3_{A_n}$. In this case we
can completely describe the subcategories and hearts that appear in a multi-scale
stability condition.
\par
\begin{lem}\label{lem:possibleV} If $(\cA_\bullet,Z_\bullet)$ is a multi-scale stability
condition and $\calA_0=\modules \cl J_Q$, then  $\calA_{1}=\modules \cl J_{Q_{I}}$
where~$I$ is a subset of the vertices of~$\calQ$, and~$\calV_1 = \pvd \Gamma_I$.
\par
Moreover, there is a bijection of the subcategories~$\calV_1^Z$
with homotopy classes of decorated marked subsurfaces~$\Sigma \subset \surfo$,
such that each component~$\Sigma_j$ for $j \in J$ is of type~$A_{n_j}$, i.e.\ a disc with
$n_j+1$ decoration points in its interior and $n_j+3$ marked points at its boundary.
Here $n_j \geq 1$ and the decomposition is constrained precisely by
$\sum_{j \in J} (n_j+1) \leq  n + 1$, where equality is allowed if and only if
$|J| \geq 2$.
\end{lem}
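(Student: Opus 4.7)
The plan is to first identify $\calA_1$ and $\calV_1$ using the Serre-subcategory structure forced by Lemma~\ref{lem:serrenessV} together with the recollement of abelian and triangulated categories recalled in Section~\ref{subsec:quiver}, and then to translate the resulting combinatorial data (a subset $I \subseteq Q_0$) into the geometric classification via subsurfaces using the DMS correspondence from Theorem~\ref{thm:BS15_iso}.

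For the first claim, Lemma~\ref{lem:serrenessV} asserts that $\calA_1$ is Serre in $\calA_0 = \modules \cl J_Q$, a finite length abelian category with simples $\{S_i\}_{i \in Q_0}$. Any such Serre subcategory is determined by the set $I \subseteq Q_0$ of simples it contains, and the recollement of module categories from Section~\ref{subsec:quiver} identifies the Serre subcategory generated by $\{S_i : i \in I\}$ with $\modules \cl J_{Q_I}$ via the quotient $\cl J \twoheadrightarrow \cl J/\cl J e\cl J$ for $e = \sum_{i \in I^c} e_i$. The same recollement, lifted to perfectly valued derived categories, then gives $\calV_1 = \langle \calA_1\rangle_{\mathrm{thick}} = \pvd \Gamma_I$. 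Since $Z_0$ is non-zero, at least one simple is not in $\calA_1$, so $I \subsetneq Q_0$.

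For the geometric classification, the heart $\calA_0$ corresponds to a triangulation $\TT$ of the $A_n$-disc $\surfo$ via the isomorphism $\EGp(\surfo) \cong \EGp(\DQ)$, with vertices of $Q$ in bijection with arcs of $\TT$. I would read off the decomposition of the subsurface $\Sigma$ from the \emph{dual tree} of $\TT$, whose vertices are the $n+1$ triangles and whose edges are the $n$ arcs. A subset $I$ of arcs selects a subset of edges of this tree, and its connected components of size $\geq 2$ are subtrees of size $n_j + 1$, each corresponding to a sub-polygon $\Sigma_j$ triangulated by the $n_j$ arcs of that component. A side count (each of the $n_j+1$ triangles has three sides; arcs interior to $\Sigma_j$ are counted twice, boundary edges once) gives $3(n_j+1) - 2n_j = n_j + 3$ boundary edges, hence $n_j + 3$ marked points on $\partial\Sigma_j$ and $n_j + 1$ decoration points in the interior, i.e.\ an $A_{n_j}$-type subsurface.

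Finally, non-trivial components consume $\sum_{j \in J}(n_j + 1)$ of the $n+1$ triangles of $\TT$, whence $\sum_{j \in J}(n_j+1) \leq n+1$, with equality iff no triangle of $\TT$ is isolated in the dual subgraph. Combining the identity $|I| = \sum_j n_j = (n+1) - |J| - (\#\text{isolated triangles})$ with the inequality $|I| \leq n-1$ (from $I \subsetneq Q_0$), the equality case forces $|J| \geq 2$, as claimed. Conversely, any $(n_j)_{j \in J}$ satisfying these constraints is realized by some subset of arcs, and distinct $I$'s yield distinct subcategories $\pvd \Gamma_I$, establishing the bijection with homotopy classes of such subsurfaces. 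The delicate point will be ensuring that the two identifications (of the Serre subcategory with $\modules \cl J_{Q_I}$ and of its thick hull with $\pvd \Gamma_I$) are compatible via $H^0$; this is precisely what the commutative diagram of recollements recalled at the end of Section~\ref{subsec:quiver} provides.
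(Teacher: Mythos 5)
Your proposal is correct and follows essentially the same route as the paper: Serreness (Lemma~\ref{lem:serrenessV}) pins $\calA_1$ down as the Serre subcategory generated by a subset $I$ of simples, the Kalck--Yang recollement identifies the thick hull with $\pvd\Gamma_I$, and the translation to subsurfaces goes through the arc/triangulation dictionary. Two small remarks: the paper packages the $\calV_1=\pvd\Gamma_I$ step as the explicit chain $\calV_1 = \pvd_{\cl J_I}(\Gamma) = \pvd_{\Gamma/\Gamma e\Gamma}(\Gamma)=\pvd(\Gamma_I)$ citing \cite[Cor.~6.4(b), Thm.~7.1]{kalckyang2}, where your recollement shortcut silently uses that $\pvd\Gamma_I$ is generated by its standard heart (true, but worth saying); and your arithmetic derivation of the equality constraint from $|I|\le n-1$ together with $\sum_j(n_j+1)=|I|+|J|+\#\{\text{isolated}\}$ is a cleaner bookkeeping than the paper's geometric argument about the closed arc outside $\Sigma$, though they encode the same content. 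Your sub-polygon/dual-tree picture is isotopic to the paper's tubular neighborhood of the dual closed arcs, and the side count recovering $n_j+3$ marked points matches.
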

\par
Using this notation we say that $\calV_1^Z$ is a subcategory \emph{of type
$\rho := (n_1,\ldots,n_{|J|})$}. Iterating this over all~$\calV_i$
appearing in a multi-scale stability condition, we say that $[\calA_\bullet,Z_\bullet]$
\emph{is of type $\bfrho = (\rho_i)_{i=1}^L$}, where $\rho_i$ is the type of $\calV_i$. 
\par
\begin{proof}
Consider $\calA_1 \subset \calV_1^{Z}$. Since $\calA_1 \subset \calA_0$ is
Serre, it is generated by a subset $\calS_1 \subset \Sim(\calA_0)$ of the
simples of~$\calA_0$, those whose $Z_0$-image is zero. By the correspondence
summarized in Section~\ref{subsec:quiver} this defines the subquiver~$Q_I$
and shows  $\calA_{1}=\modules \cl J_{I}$.
\par
For the second claim, we show the chain of equalities
$$ \calV_1 \= \pvd_{J_I}(\Gamma)
\= \pvd_{\Gamma / \Gamma e \Gamma} (\Gamma) \= \pvd(\Gamma_I)\,,$$
where $\pvd_R(\calD) \subset \pvd (\calD)$ is the full subcategory with cohomologies
in~$\modules R$. The first follows using the characterization of~$\calA_0$ as a heart
in terms of a decomposition of objects in~$\pvd_{J_I}(\Gamma)$ into triangles and
Serreness of $\modules \cl J_I$ in $\calA_0$, as in Lemma~\ref{lem:serrenessV}.
For the second we just intersect $\Gamma_I=\Gamma/\Gamma e \Gamma$ with
$H^0\Gamma=\cl J$. The last equality follows from \cite[Corollary 6.4~(b)]{kalckyang2},
where we can take  $B=\Gamma/\Gamma e \Gamma=\Gamma_I$ thanks to Theorem~7.1
in loc.\ cit.
\par
Conversely, choosing
$Z_0$ to be zero for any subset of the simples and $\ol{\bH}$-valued for the
complementary set of simples defines a subcategory~$\calV_1^Z$ that can be
be completed to a multi-scale stability condition.
\par
We now translate into the language of Section~\ref{sec:DMS}. Let $\TT$ be the 
triangulation of the DMS $\surfo$ corresponding to~$\calA_0$. Dual to the open
arcs forming the triangulation there are closed arcs connecting the decorating
points. These are in canonical bijection to the simples in~$\calA_0$,  see e.g.\
the summary in \cite[Theorem~7.2]{BMQS}. Let $\AS_1^\vee = \{\eta_S, S \in \calS_1\}$
be the closed arcs corresponding to~$\calA_1$ and $\AS_1$ the set of dual closed
arcs. Let~$\Sigma = \Sigma_1$ be the subsurface  consisting of a
tubular neighborhood of~$\AS_1^\vee$
and let~$\Sigma^{(j)}$ denote its connected components. They are all homotopic to
a disc containing a certain number, say $n_j+1$, of simple zeros with one boundary
component. Homotoping the
open arcs in~$\AS_0$ so that they intersect~$\Sigma$ minimally, we deduce from
duality that precisely those in~$\AS_1$ have non-trivial intersection with~$\Sigma$.
We may thus mark $\kappa^{(j)} = n_j+3$ points on the boundary of~$\Sigma^{(j)}$
and restrict the arcs in~$\AS_1$ to arcs in $\Sigma$ connecting these boundary
points so that $\AS_1|_\Sigma$ is a triangulation of~$\Sigma$. In fact the quiver
associated with each subsurface~$\Sigma^{(j)}$ is of type $A_{n_j}$.
\par
The constraints for $n_j$ reflect that the total number of decoration points
in~$\surfo$ has to be at least two and the fact that there is at least one
simple outside~$\calA_1$, i.e.\ a closed arc not contained in~$\Sigma$. This
arc has to connect~$\Sigma$ with a decoration point
outside~$\Sigma$ (the case of strict inequality) or connects two components
(the case $|J| \geq 2$).
\end{proof}
\par
As a consequence we may associate with each subsurface~$\Sigma_j^{(i)}$ a subcategory
$\calV_i^{(j)}$ of~$\calV_i$, that jointly give an orthogonal decomposition of
$\calV_i$. We refer to~$\calV_i^{(j)}$ as the \emph{components} of~$\calV_i$.
\par
Recall the notion of principal components from Section~\ref{sec:BMQS}.
\par
\begin{cor} \label{cor:quotprinc}
Suppose that $(\calA_\bullet, Z_\bullet) \in \MStab^\circ(\calD^3_{A_n})$. Then
the quotient hearts $\ol{\calA}_i \subset\calV_{i}/\calV_{i+1}$ support a
stability condition in the principal component $\Stab^\bullet(\calV_{i}/
\calV_{i+1})$ of the stability
manifold of $\calV_{i}/\calV_{i+1}$ for all~$i=0,\ldots,L$.
\end{cor}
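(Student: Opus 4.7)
The plan is to proceed by induction on $i$, establishing the stronger claim that each $\calA_i$ is, as a heart of $\calV_i$, the standard heart with respect to some Ginzburg algebra presentation of $\calV_i$ as the perfectly-valued derived category of a quiver of $A_m$-union-type. Once this structural fact is in hand, membership in the principal component follows directly from the definitions.

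For the base case $i=0$: by reachability, $\calA_0$ lies in $\EG^\circ(\calD^3_{A_n})$. Since this connected component is generated by simple tilts from the standard heart $\mod\cl J(Q,W)$, and since simple tilts of a module heart of a Jacobian algebra of a mutation-$A_n$-type quiver again yield module hearts of the mutated quiver (Keller--Yang, as recalled in Remark~2.10), we may write $\calA_0 = \mod\cl J(Q^{(0)}, W^{(0)})$ for some quiver $(Q^{(0)}, W^{(0)})$ in the mutation class of $(Q,W)$.

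For the inductive step, suppose $\calA_i = \mod\cl J(Q^{(i)}, W^{(i)})$ is the standard heart of $\calV_i = \pvd(\Gamma(Q^{(i)}, W^{(i)}))$ for some $A_{n_i}$-type quiver (a disjoint union by Lemma~4.7). Applying Lemma~4.6 to the Serre inclusion $\calA_{i+1}\subset\calA_i$, the subcategory $\calA_{i+1}$ is generated by those simples $S_j$ of $\calA_i$ on which $Z_i$ vanishes; let $I_{i+1}$ index this collection. The argument in the proof of Lemma~4.7 now applies verbatim with $(Q^{(i)},W^{(i)})$ and $\calV_i$ in place of $(Q,W)$ and $\calD$: it yields $\calA_{i+1} = \mod\cl J(Q^{(i)}_{I_{i+1}}, W^{(i)}_{I_{i+1}})$ together with the chain of equalities identifying the thick subcategory generated by $\calA_{i+1}$ inside $\calV_i$ with $\calV_{i+1} = \pvd(\Gamma(Q^{(i)}_{I_{i+1}}, W^{(i)}_{I_{i+1}}))$. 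Hence $\calA_{i+1}$ is itself the standard heart of $\calV_{i+1}$, closing the induction.

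To conclude: each $\calA_i$ is, up to a choice of quiver presentation of $\calV_i$, a standard heart, and therefore lies trivially in the distinguished component $\EG^\circ(\calV_i)$. Combined with $\calV_{i+1}$-compatibility of $\calA_i$ (Lemma~4.6), the quotient heart $\ol{\calA}_i = \calA_i/\calA_{i+1}$ is of quotient type and admits the representative $\calA_i \in \EG^\circ(\calV_i)$, so $\ol{\calA}_i \in \EGb(\calV_i/\calV_{i+1})$ by the definition preceding~(3.5). Since $(\ol{\calA}_i, \ol{Z}_i)$ is a usual stability condition on $\calV_i/\calV_{i+1}$ by the definition of multi-scale stability condition, equation~(3.4) defining $\Stas(\Dcol)$ places it in $\Stab^\bullet(\calV_i/\calV_{i+1})$.

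The main technical subtlety is the identification, used in the base case, of hearts in $\EG^\circ(\calD^3_{A_n})$ as standard hearts of mutated Jacobian algebras; this is the classical correspondence between simple tilting and quiver mutation in the $A_n$ setting, but needs to be invoked explicitly since the $\calA_0$ of a reachable multi-scale stability condition is a priori an arbitrary element of the distinguished component, not literally the standard heart of the initial quiver $(Q,W)$.
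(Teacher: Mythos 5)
Your proof is correct in substance and follows essentially the same unrolling-of-definitions strategy as the paper's one-line argument, but it is phrased algebraically (quiver presentations and Keller--Yang mutation compatibility) where the paper proceeds geometrically (restricted triangulations $\AS_i|_{\Sigma_i}$ of the subsurfaces $\Sigma_i$). The paper's proof invokes the dictionary between hearts in $\EG^\circ$ and reachable triangulations of $\surfo$, then observes that the restriction to $\Sigma_i$ is by construction the reference triangulation defining the principal component at each level; your proof replaces this by the inductive observation that each $\calA_i$ is the standard heart $\modules\cl J(Q^{(i)},W^{(i)})$ of an explicitly-constructed presentation of $\calV_i$. Both routes are valid, and yours has the merit of making the use of Lemma~\ref{lem:possibleV} at every level explicit rather than just at $i=1$.

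There is, however, one articulation gap you should tighten. In the concluding paragraph you assert that $\calA_i$ ``is, up to a choice of quiver presentation of $\calV_i$, a standard heart, and therefore lies trivially in the distinguished component $\EG^\circ(\calV_i)$.'' Being a standard heart for \emph{some} presentation of $\calV_i$ does not by itself place a heart in $\EG^\circ(\calV_i)$: the distinguished component is pinned down by a reference (inherited from the reference triangulation $\TT_0$ of $\surfo$, or equivalently by the restricted triangulation data of Lemma~\ref{lem:possibleV}), and a standard heart for a non-reference presentation could a priori sit in a different component. What saves the argument is that the presentation $(Q^{(i)},W^{(i)})$ you construct inductively is precisely the one compatible with the reference: it is obtained by restricting a presentation of $\calA_0$ that is reachable from $\TT_0$, so the corresponding triangulation $\AS_i|_{\Sigma_i}$ is exactly what determines $\EG^\circ(\calV_i)$. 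You should say this explicitly (this is exactly what the paper's phrase ``the one corresponding to the triangulation $\AS_i|_{\Sigma_i}$'' encodes); as written, ``therefore lies trivially'' hides the required consistency check. Separately, the internal reference ``Remark~2.10'' does not exist; the mutation/tilting compatibility you need is recorded in Section~\ref{subsec:quiver} and in Remark~\ref{commentonmutation}, and the lemmas you cite as~4.6 and~4.7 should be Lemma~\ref{lem:serrenessV} and Lemma~\ref{lem:possibleV} respectively.
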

\par
\begin{proof}
In fact they belong to the one corresponding to the triangulation $\AS_i|_{\Sigma_i}$,
extending the notation of the previous proof in the obvious way.
\end{proof}
\par

%%%%%%%%%%%%%%%%%%%%%%%%
\subsection{The $\bC$-action}
%%%%%%%%%%%%%%%%%%%%
Extending the $\bC$-action from usual
stability conditions to multi-scale stability conditions is a crucial ingredient
for the subsequent plumbing construction.
\par
\begin{prop} \label{prop:Caction}
Recall that we suppose  $\calD = \calD^3_{A_n}$. Then there is an action of~$\bC$
denoted by $(\lambda,[\calA_\bullet,Z_\bullet]) \mapsto \lambda\cdot
[\calA_\bullet,Z_\bullet]
=: [\calA_\bullet',Z_\bullet']$ on $\MStab^\circ(\cD)$, such that
\begin{itemize}
\item [(i)]the collection of subcategories $\calV_i^{Z} = \calV_i^{Z'} =: \calV_i$
is preserved, 
\item[(ii)] $Z_i' \= e^{-\sqrt{-1}\pi \lambda} Z_i$, and
\item [(iii)] $\lambda \cdot (\ol{\calA}_i,\ol{Z}_i) = (\ol{\calA}'_i,\ol{Z}'_i)$
is the usual $\bC$-action on $\Stab(\calV_i/\calV_{i+1})$.
\end{itemize}
\end{prop}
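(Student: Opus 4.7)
\medskip

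The plan is to define the action first for $\lambda$ with $0<\Re(\lambda)\leq 1$ in terms of a simultaneous tilt of all hearts $\mathcal{A}_i$ in the filtration $\mathcal{A}_\bullet$, and then extend to all $\lambda\in\bC$ by composition with integer shifts (noting that $[1]$ corresponds to $\lambda=1$). For such $\lambda$, property~(iii) forces the image on the $i$-th quotient to be the usual $\bC$-action, namely the simple tilt of $\overline{\mathcal{A}}_i$ at the torsion-free class $\overline{\mathcal{F}}_\lambda^{(i)}$ of $\overline{Z}_i$-semistable objects of phase in $(0,1-\Re(\lambda)]$ combined with the rescaling of $\overline{Z}_i$ by $e^{-\pi i \lambda}$. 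The substance of the proof is to lift these level-wise tilts to a coherent tilt of the nested hearts.

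For each $i$, use the exact quotient functor $\mathcal{A}_i\twoheadrightarrow \overline{\mathcal{A}}_i$ provided by the Serreness statement of Lemma~\ref{lem:serrenessV} to pull back $(\overline{\mathcal{T}}_\lambda^{(i)},\overline{\mathcal{F}}_\lambda^{(i)})$ to a torsion pair $(\mathcal{T}_\lambda^{(i)},\mathcal{F}_\lambda^{(i)})$ in $\mathcal{A}_i$ by setting $\mathcal{T}_\lambda^{(i)}=\{X\in\mathcal{A}_i \mid \overline{X}\in \overline{\mathcal{T}}_\lambda^{(i)}\}$. In particular $\mathcal{A}_{i+1}\subset \mathcal{T}_\lambda^{(i)}$ sits entirely in the torsion class. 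Define $\mathcal{A}_i':=\mu^\sharp_{\mathcal{F}_\lambda^{(i)}}\mathcal{A}_i$ and $Z_i':=e^{-\pi i\lambda}Z_i$. In the $A_n$-setting, Corollary~\ref{cor:quotprinc} together with Lemma~\ref{lem:possibleV} ensures that each $\overline{\mathcal{A}}_i$ is a finite heart in a principal component, so by Proposition~\ref{prop:tiltviasimpletilt} the tilt reduces to a finite chain of simple tilts; this controls the explicit effect on simples.

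The remaining verifications are then largely formal. The nesting $\mathcal{A}_{i+1}'\subset \mathcal{A}_i'$ follows because the pulled-back torsion pair restricts to its analogue in $\mathcal{A}_{i+1}$ (using that $\mathcal{A}_{i+1}\hookrightarrow \mathcal{A}_i$ is Serre with the same quotient target up to relabeling) and restrictions of torsion pairs commute with tilting. The thick triangulated subcategory generated by $\mathcal{A}_i'$ equals that generated by $\mathcal{A}_i$, yielding $\mathcal{V}_i^{Z'}=\mathcal{V}_i^{Z}$ and proving~(i). The identity $Z_i'=e^{-\pi i\lambda}Z_i$ is~(ii); since this preserves the kernel, the clause of Definition~\ref{def:mstab} requiring $\mathcal{A}_{i+1}'$ to be generated by the $Z_i'$-null objects in $\mathcal{A}_i'$ is inherited from the original data. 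The induced quotient $(\overline{\mathcal{A}}_i',\overline{Z}_i')$ is by construction $(\mu^\sharp_{\overline{\mathcal{F}}_\lambda^{(i)}}\overline{\mathcal{A}}_i,e^{-\pi i\lambda}\overline{Z}_i)$, giving~(iii). Well-definedness on equivalence classes is immediate from~(iii), since the usual $\bC$-action descends through projective equivalence on each quotient; extension to all $\lambda\in\bC$ via composition with integer shifts then gives additivity $\lambda\cdot(\mu\cdot\sigma)=(\lambda+\mu)\cdot\sigma$, which modulo the equivalence relation reduces to the same property for the usual $\bC$-action on each $\mathcal{V}_i/\mathcal{V}_{i+1}$.

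The principal technical obstacle is the lifting step, specifically verifying that the pulled-back class $\mathcal{F}_\lambda^{(i)}\subset \mathcal{A}_i$ is closed under subobjects (not merely after passing to $\overline{\mathcal{A}}_i$) and that the resulting tilts genuinely nest. Because $\mathcal{A}_{i+1}$ is placed in the torsion side, no subobject of an element of $\mathcal{F}_\lambda^{(i)}$ can leak into $\mathcal{A}_{i+1}$, which combined with the exactness of the Serre quotient should give the needed closure properties. For a general triangulated $\calD$ this argument would require additional structure, but in the $A_n$-case it is fully controlled by the explicit description of subcategories in Lemma~\ref{lem:possibleV} and the reduction to finite chains of simple tilts afforded by Proposition~\ref{prop:tiltviasimpletilt}.
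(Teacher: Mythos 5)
Your construction has a genuine gap, and it is exactly at the step you flag as the ``principal technical obstacle'' and then dismiss too quickly: the tilts at consecutive levels do not nest.

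You define $\torsion_\lambda^{(i)}$ as the full preimage of $\ol{\torsion}_\lambda^{(i)}$, so that by your own observation $\calA_{i+1}\subset\torsion_\lambda^{(i)}$. This forces $\torsionfree_\lambda^{(i)}\cap\calA_{i+1}=\{0\}$ (any $X$ lying in both $\torsion_\lambda^{(i)}$ and its right perpendicular satisfies $\Hom(X,X)=0$). Now compute the intersection of the tilted heart with $\calV_{i+1}$: for $M\in\calA_i'\cap\calV_{i+1}$, t-exactness of $\calV_{i+1}\hookrightarrow\calV_i$ (Lemma~\ref{lem:serrenessV}) places the $\calA_i$-cohomologies $H^0(M)\in\torsion_\lambda^{(i)}$ and $H^{-1}(M)\in\torsionfree_\lambda^{(i)}$ inside $\calA_{i+1}$; since $\torsionfree_\lambda^{(i)}\cap\calA_{i+1}=\{0\}$, $H^{-1}(M)=0$, hence $M\in\calA_{i+1}$. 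Thus $\calA_i'\cap\calV_{i+1}=\calA_{i+1}$: the top-level tilt at the pulled-back class leaves the bottom level untouched. Meanwhile $\calA_{i+1}'=\mu^\sharp_{\torsionfree_\lambda^{(i+1)}}\calA_{i+1}$ is, for generic $\lambda$, a heart different from $\calA_{i+1}$. If $\calA_{i+1}'\subset\calA_i'$ held, intersecting with $\calV_{i+1}$ would force $\calA_{i+1}'\subset\calA_{i+1}$, hence equality of hearts, a contradiction whenever $\ol{\torsionfree}_\lambda^{(i+1)}$ is nontrivial. Your phrase ``the pulled-back torsion pair restricts to its analogue in $\calA_{i+1}$'' is therefore false: the restriction of $\torsionfree_\lambda^{(i)}$ to $\calA_{i+1}$ is zero by your own construction, not $\torsionfree_\lambda^{(i+1)}$; the two quotients $\ol{\calA}_i$ and $\ol{\calA}_{i+1}$ have different targets, so there is no way for the former to ``see'' the latter's torsion pair.

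Closing this gap is precisely the substance of the paper's proof and cannot be dispatched formally. The paper's construction proceeds in two stages: first tilt $\calA_0$ (and $\calA_1$) at $\torsionfree_\lambda^1\subset\calA_1$ viewed as a torsion-free class \emph{in $\calA_0$}, so that the intersection of the modified $\calA_0$ with $\calV_1$ tracks the lower-level tilt; only afterwards lift the quotient tilt at $\ol{\torsionfree}_\lambda$ one simple at a time, using convenient representatives (Lemma~\ref{le:exconvrep}, constructed explicitly in Lemma~\ref{lemm:constructconvenience}) in a forward-tilt/tilt-at-$S$/backward-tilt pattern that by design preserves $\calA_0''\cap\calV_1$. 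None of this appears in your argument, and the conclusion of your second paragraph does not follow without it.
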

Thanks to Proposition \ref{prop:Caction}, proven below, we define the
projectivized space of multi-scale stability conditions $\PMStab^\circ(\calD)
= \MStab^\circ(\calD)/\bC$. In this language note that retaining just the
filtration steps from a level~$j$ onward, i.e.\ the datum of tuples $(\calA_{\geq j},
Z_{\geq j})$ together with the ambient triangulated category $\calV^Z_{j-1}$
gives by definition an element in $\PMStab^\circ(\calV^Z_{j-1})$ with $L-j+1$ levels
below zero.
\par
The restriction  $\calD = \calD^3_{A_n}$ stems from two requirements in the proof.
First, we need finite type. Second, we need a way to lift tilts from quotient
categories to~$\calD$ itself. We have shown this for quiver categories in
\cite{BMQS} and isolate this step in the following notion.
\par
\begin{definition}
  Given a thick triangulated subcategory $\calV \subset \calD$, a heart~$\calA$
  of~$\calD$
and a simple object~$S$ in $\calA \setminus (\calV\cap\calA)$,
we call a heart $\calA'$ with $\ol{\calA} = \ol{\calA'} :=  \calA'/(\calV
\cap \calA')$ a \emph{convenient representative} with respect to (the forward
tilt at)~$S$ if $\calA'$ is $\calV$-compatible and if
\be \ext^1(T,S) := \dim(\Ext^1(T,S)) = 0\quad \text{for all simples} \quad T \in \calV \cap \calA'\,.
\ee
\end{definition}
It means that a simple tilt of $\calA$ at $S$ induces a simple tilt of $\ol{\calA}$ at $\ol{S}$.
\begin{lemma} \label{le:exconvrep}
Suppose $\calD = \DQ$. For every $\calV$-compatible finite heart~$\calA$ and
every simple~$S$ there exists a convenient representative~$\calA'$, which can be
obtained from~$\calA$ by a finite sequence of simple tilts at simples in $\calA
\cap \calV$.
\par
If $\calA'$ is a convenient representative for~$S$, then $\mu_S^\sharp \calA'$
is $\calV$-compatible and
\be \label{eq:quotfwdtilt}
\ol{\mu_S^\sharp \calA'} \= \mu_S^\sharp \ol{\calA'}\,.
\ee
\end{lemma}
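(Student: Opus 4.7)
The plan is to address the two assertions in order, handling the easier implication first and then the more delicate existence statement.

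For the assertion that $\mu_S^\sharp \calA'$ is $\calV$-compatible and that the forward tilt commutes with the quotient, I would apply directly the formula for simples of a forward tilt. The new simples of $\mu_S^\sharp \calA'$ are $S[1]$ together with, for each simple $T\neq S$ of $\calA'$, the object $\Cone(T \to S[1]^{\oplus \ext^1(T,S)})[-1]$. By convenience of $\calA'$, every simple $T \in \calV \cap \calA'$ satisfies $\ext^1(T,S)=0$, so all such $T$ remain unchanged and hence stay simple in the tilted heart. In particular the simples of $\mu_S^\sharp \calA'$ lying in $\calV$ generate the same Serre subcategory as those of $\calA'$, giving $\calV$-compatibility. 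The simples outside $\calV$ transform by the same cone formula, and applying the quotient functor (which identifies the relevant Ext groups between objects whose image lies outside $\calV$ by Serreness, cf.~Lemma~\ref{lem:serrenessV}) reproduces precisely the formula defining $\mu_{\ol S}^\sharp \ol{\calA'}$, yielding the claimed equality~\eqref{eq:quotfwdtilt}.

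For the existence of a convenient representative, the plan is an inductive procedure. Call a simple $T$ of $\calV \cap \calA$ \emph{bad} if $\ext^1(T,S)>0$. If no bad simple exists, $\calA$ is already convenient. Otherwise, I would pick a bad simple $T$ and perform the forward simple tilt $\mu_T^\sharp \calA$, noting that this preserves $\calV$-compatibility: the new simple $T[1]$ lies in $\calV$, transformed simples of $\calA\cap\calV$ remain in $\calV$ by thickness, and transformed simples outside $\calV$ remain outside (any triangle $X \to T' \to T[1]$ with $T' \notin \calV$ and $T \in \calV$ forces $X \notin \calV$). The new simple $T[1]$ is no longer bad because $\ext^1(T[1],S) = \hom(T,S)=0$ by the distinctness of simples, and $S$ itself remains simple in $\mu_T^\sharp \calA$ provided $\ext^1(S,T)=0$; in the $A_n$-setting this condition is automatic since between any two distinct simples of a heart at most one direction of $\Ext^1$ is nonzero, a consequence of the 3-Calabi-Yau structure together with the geometric realization by arcs. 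Alternatively one may apply Proposition~\ref{prop:tiltviasimpletilt} to the torsion-free class $\calF \subset \calV \cap \calA$ generated by all bad simples (which sits inside $\calA$ as a torsion-free class thanks to Serreness of $\calV \cap \calA$), decomposing $\mu_\calF^\sharp \calA$ into a finite composition of simple tilts at simples in $\calV$.

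The main obstacle is guaranteeing termination, since a previously good simple $U \in \calV \cap \calA$ with $\ext^1(U,T)\neq 0$ is transformed into $U' = \Cone(U \to T[1]^{\oplus k})[-1]$, and the long exact sequence shows that $\ext^1(U',S)$ can be as large as $k\cdot \ext^1(T,S)$, so the naive count of bad simples is not monotone. To overcome this I would either proceed by a careful complexity argument exploiting the tree-like combinatorics of $A_n$-hearts, where simples correspond to arcs in a disc and the convenience condition has a clean geometric interpretation of avoiding certain intersection patterns, or invoke the exchange-graph isomorphism \eqref{eq:EGcoliso} of \cite{BMQS}: the forward tilt of $\ol \calA$ at $\ol S$ in $\calD/\calV$ lifts along $\EGb(\DQ) \to \EGb(\Dcol)$ to a simple forward tilt in $\calD$ at a lift of $\ol S$, and the source of this lifted tilt is the desired convenient representative.
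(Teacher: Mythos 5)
Your treatment of the second assertion (preservation of $\calV$-compatibility under $\mu_S^\sharp$ and the identity $\ol{\mu_S^\sharp\calA'}=\mu_{\ol S}^\sharp\ol{\calA'}$) is the right idea and essentially correct, though the step matching the cone formula across the quotient deserves a word more: you need that for a simple $T\notin\calV$ of a convenient $\calA'$, the multiplicity $\ext^1_{\calD}(T,S)$ controlling the universal extension agrees with the corresponding multiplicity computed in $\calD/\calV$, and that the new simples lying in $\calV$ are exactly the old ones. The convenience hypothesis is precisely what makes this work, so it should appear explicitly in that part of the argument rather than just ``Serreness''. For comparison, the paper itself does not prove this lemma in-text; it cites \cite[Proposition~5.8]{BMQS}.

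The real problem is in the existence half. You correctly identify that the naive induction (tilt at a bad simple, count bad simples) need not terminate: a previously good simple $U$ with $\ext^1(U,T)\neq 0$ can become bad after tilting at $T$, so no obvious quantity decreases. But you then offer two escape routes and carry out neither. The ``tree-like combinatorics'' route is not a heuristic that can be waved at --- it is exactly the content of the paper's own Lemma~\ref{lemm:constructconvenience}, whose proof exploits the $A_n$-specific fact that for any simple $S$ there are at most two simples $T$ with $\ext^1(T,S)=1$, uses condition~\eqref{seccond} to single out a well-ordered chain $S_1,\dots,S_m$ of bad simples, and then tilts inductively at the objects $S_{1\dots i}$ so that the chain length drops by one at each step; that is the termination argument you are missing. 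Your exchange-graph alternative~(b) is closer to circular than to a proof: the isomorphism~\eqref{eq:EGcoliso} is a statement purely about $\EGb(\colsur)\cong\EGb(\Dcol)$ on the quotient side, and the existence of a lift of a flip in $\Dcol$ to a simple forward tilt of a $\calV$-compatible heart in $\calD$ --- reached by tilts at simples in $\calV$ from the given representative --- is precisely what the lemma asserts. As written, the existence part of the proof is incomplete.
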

\par
\begin{proof} See \cite[Proposition~5.8]{BMQS}. 
\end{proof}
In the following we will give an explicit procedure for finding a convenient
representative if $\calD = \calD^3_{A_n}$, that will be useful later.
\par
\begin{lemma}\label{lemm:constructconvenience}
Recall that we suppose  $\calD = \calD^3_{A_n}$. Let $\calA$ be a  heart compatible
with~$\calV$ and let $S_0 \in \calA \backslash \calA\cap\calV$ be simple. Then there
exist a (possibly empty) set of indecomposables $\{S_1,\dots, S_{1\dots m}, S'_1,\dots,
S'_{1\dots m'}\}\subset \calA\cap \calV$ explicitly defined in the proof below, such
that $(\mu^\sharp_{S'_{1\dots m'}}\cdots\mu^\sharp_{S'_1})(\mu^\sharp_{S_{1\dots m}}\cdots
\mu^\sharp_{S_1})(\calA)$ is a convenient representative of $\ol{\calA}$ with respect
to $S_0$.
\end{lemma}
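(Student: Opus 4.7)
By Lemma~\ref{lem:possibleV} applied to $\calA$ and $\calV$, the subcategory $\calV\cap\calA$ is generated by a subset $I$ of the simples of $\calA$, whose restricted quiver is a disjoint union of type-$A$ subquivers of the $A_n$-quiver of $\calA$. Under the convention that an arrow $a\to b$ induces $\Ext^1(S_a,S_b)\ne 0$, the only simples $T\in\calV\cap\calA$ that can violate the condition $\Ext^1(T,S_0)=0$ are the (at most two) neighbors of $S_0$ in the quiver with arrow oriented $T\to S_0$; call these at most two neighbors the \emph{bad} ones. They lie on opposite sides of $S_0$ in the quiver, so the two sequences of tilts constructed below, one for each bad side, do not interact.

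For each bad side, let $T$ be the bad neighbor and let $\{S_1,\dots,S_m\}$ be the simples of the connected component of $I$ containing $T$. With a labeling compatible with the orientation of the $A_m$-subquiver so that the iterated extensions $S_{1\dots k}$ of \eqref{S_ij} are well-defined indecomposables of $\calA\cap\calV$, I perform the tilts $\mu^\sharp_{S_1},\mu^\sharp_{S_{12}},\dots,\mu^\sharp_{S_{1\dots m}}$ in order. The crux is to verify inductively that $S_{1\dots k}$ is indeed simple in the heart $\mu^\sharp_{S_{1\dots (k-1)}}\cdots\mu^\sharp_{S_1}\calA$ just constructed; this uses the explicit formula for simples under a forward tilt recalled just before Proposition~\ref{prop:tiltviasimpletilt}, together with the fact that $\{S_1,\dots,S_m\}$ form an $A_m$-subquiver, so that the successive tilts realize the canonical mutation sequence of the $A_m$-piece. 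By Proposition~\ref{prop:tiltviasimpletilt}(2), the composite equals $\mu^\sharp_{\bra S_1,S_{12},\dots,S_{1\dots m}\ket}\calA$, a tilt at a torsion-free class entirely inside $\calV$, so the resulting heart remains $\calV$-compatible.

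A direct inductive computation shows that after the $m$ tilts the simples of $\calV$ in the new heart on this side are $S_{1\dots m}[1]$ together with the remaining chain simples $\{S_2,\dots,S_m\}$; the bad neighbor $S_1$ has been replaced by the shifted indecomposable. For each such simple $T'$ the Ext condition $\Ext^1(T',S_0)=0$ then holds: for $T'=S_{1\dots m}[1]$ it reduces to $\Hom(S_{1\dots m},S_0)=0$ because $S_0$ is not isomorphic to any quotient of the indecomposable $S_{1\dots m}$; for $T'=S_i$ with $i\geq 2$ there is no quiver-arrow $S_i\to S_0$ in $\calA$, since only the bad neighbor $S_1$ had one. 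Performing the analogous sequence $\mu^\sharp_{S'_1},\dots,\mu^\sharp_{S'_{1\dots m'}}$ on the other bad side yields the desired convenient representative. The main technical obstacle is the inductive simplicity check at each step of the sequence; this is routine in $A_n$-module categories, and via Lemma~\ref{lem:possibleV} it corresponds to a sequence of arc-flips in the subsurface of $\colsur$ associated to $\calV$.
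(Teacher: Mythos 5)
The overall strategy is right, but there is a gap in how you define the tilting sequence. The paper defines the set $\{S_1,\dots,S_m\}$ \emph{not} as the full connected component of the ext-quiver of $\calA\cap\calV$ containing the bad neighbor, but as the \emph{maximal directed subchain} determined by the conditions $\ext^1(S_i,S_{i-1})=1$ and $\ext^1(S_{i+1},S_{i-1})=0$ — that is, the maximal string of simples in $\calV\cap\calA$ whose arrows all point towards $S_0$. This chain terminates the first time the orientation of the path in the ext-quiver of $\calA\cap\calV$ changes (i.e.\ at the first source or sink inside the component), and this can happen strictly before you reach the end of the component.

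If the component is longer than this directed chain — say the path looks like $S_1 \to S_0$ with $S_1 \to S_2$ in $\calV\cap\calA$, so $S_1$ is a source — then the paper's chain is just $\{S_1\}$ and one simple tilt already produces a convenient representative; by contrast, your prescription asks to further tilt at $S_{12}$ and beyond. But after $\mu^\sharp_{S_1}$ the object $S_{12}$ (with $0\to S_2\to S_{12}\to S_1\to 0$) is no longer concentrated in degree~$0$ for the new t-structure — indeed $\Hom(S_{12},S_1)\neq 0$, so $S_{12}$ lies neither in the torsion class nor the torsion-free class defining the tilt — hence $S_{12}$ is not a simple of $\mu^\sharp_{S_1}\calA$ and $\mu^\sharp_{S_{12}}$ is not a simple forward tilt. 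Your inductive simplicity check, which you describe as routine, therefore fails in general, and the composite tilt is not the one you claim. Separately, your verification of the Ext-condition at the end argues from the absence of arrows $S_i\to S_0$ in the \emph{original} quiver; the mutations can and do create new arrows into $S_0$ by composition through the mutated vertex (this is precisely how the new bad neighbor $S_{1\ldots k}$ appears at step $k$), and one must check that these all cancel against reversed arrows by $2$-cycle deletion at the final step — the paper does this via the explicit sequence of mutated ext-quivers in Figures 4--6, whereas your argument does not address it.
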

\par
\begin{proof}
Observe that for any simple $S \in \calA$ the number of 
(isomorphism classes of) simples $T$ in $\calA$ satisfying $\ext^1(T, S) = 1$ 
is at most $2$, since $\calA=\rep(Q,W)$ with $(Q,W)$ a quiver of $A_n$-type or,
more generally, since it comes from a triangulation of a surface. If there are
no simples in $\calA\cap\calV$  with this property, then~$\calA$ is convenient
with respect to~$S_0$ and we are done. Otherwise, we fix a simple $S_1\in
\calA\cap\calV$ with 
\be \label{firstcond}\ext^1(S_1,S_0)  \= 1,
\ee
and we define $S_1,\dots, S_m$ as the maximal collection of simples 
in $\calA\cap\calV$,
with 
\be\label{seccond} \ext^1(S_{i}, S_{i-1})  \= 1 \quad \text{and}
\quad \ext^1(S_{i+1},S_{i-1}) \= 0
\ee 
for $i\geq 1$. Note that the second condition in~\eqref{seccond} singles out
exactly one among the two possible objects with non-trivial extension 
with $S_i$. Consequently the collection is uniquely specified for a given~$\calA$, and the
definition is well-posed. See Figure~\ref{Lemma:constructconvenience11} for an
example of the corresponding ext-quiver.
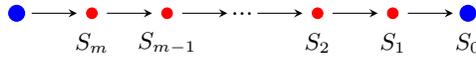
\begin{figure}[h]
	\begin{tikzpicture}
		\filldraw [blue] 
		(0,0) circle [radius=3pt]
%		(2.2,0.5) circle [radius=1pt]
		(6,0) circle [radius=3pt];

		%\node at (0,-0.4); {\small $S_{m+1}$};
		\node at (1,-0.4) {\small $S_m$};
		\node at (2,-0.4) {\small $S_{m-1}$};
		\node at (3,-0) {$...$};
		\node at (4,-0.4) {\small $S_2$};
		\node at (5,-0.4) {\small $S_1$};
		\node at (6,0.4) {\small };
		\node at (6,-0.4) {\small $S_0$};
		
		\filldraw [red]
		(1,0) circle [radius=2pt]
		(2,0) circle [radius=2pt]
		(4,0) circle [radius=2pt]
		(5,0) circle [radius=2pt];
		
		\draw [-stealth](0.2,0) -- (0.8,0);
		\draw [-stealth](1.2,0) -- (1.8,0);
		\draw [-stealth](2.2,0) -- (2.8,0);
		\draw [-stealth](3.2,0) -- (3.8,0);
		\draw [-stealth](4.2,0) -- (4.8,0);
		\draw [-stealth](5.2,0) -- (5.8,0);
%		\draw [-stealth](2.4,0.4) -- (3,0.1);
		
	\end{tikzpicture}
	\caption{(Partial) $\ext$-quiver containing the $A_{m+1}$-configuration of $S_0,S_1,\dots, S_m$ defined in the text. The small red dots correspond to simples in~$\calA \cap \calV$, while
	the big blue dots correspond to simples of $\calA$ not in $\calV$.}
	\label{Lemma:constructconvenience11}
\end{figure}

Tilting $\calA$ at 
$S_1$ produces a configuration of simples in $\mu^\sharp_{S_1}\calA\ni S_0$ such that the sequence 
of objects defined by \eqref{firstcond},\eqref{seccond} in $\mu^\sharp_{S_1}\calA$ has length $m-1$   
and consists on $S_{12},S_3,\dots, S_m$, as displayed in Figure \ref{Lemma:constructconvenience22} using the correspondence between simple tilts and mutations. 

\begin{figure}[h]
	\begin{tikzpicture}
		\filldraw [blue] 
		(0,0) circle [radius=3pt]
%		(2.2,0.5) circle [radius=1pt]
		(4.5,0.5) circle [radius=3pt];
		
		\node at (1,-0.4) {\small $S_m$};
		\node at (2,-0.4) {\small $S_{m-1}$};
		\node at (3,-0) {$...$};
		\node at (4,-0.4) {\small $S_{12}$};
		\node at (5,-0.4) {\small $S_1[1]$};
		\node at (4.5,0.9) {\small $S_0$};
		
		\filldraw [red]
		(1,0) circle [radius=2pt]
		(2,0) circle [radius=2pt]
		(5,0) circle [radius=2pt]
		(4,0) circle [radius=2pt];
		
		\draw [-stealth](0.2,0) -- (0.8,0);
		\draw [-stealth](1.2,0) -- (1.8,0);
		\draw [-stealth](2.2,0) -- (2.8,0);
		\draw [-stealth](3.2,0) -- (3.8,0);
		\draw [stealth-](4.2,0) -- (4.8,0);
		\draw [-stealth](4.1,0.1) -- (4.4,0.4);
		\draw [-stealth](4.6,0.4) -- (4.9,0.1);
%		\draw [-stealth](2.4,0.4) -- (3,0.1);
	
	\end{tikzpicture}
	
	\caption{Mutation at $S_1$ of the $\ext$-quiver of Figure~\ref{Lemma:constructconvenience11}.}\label{Lemma:constructconvenience22}
\end{figure}
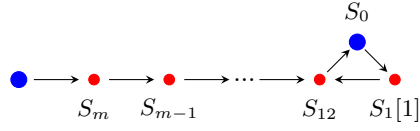

Tilting inductively at $S_{1\dots i}$ (recall the notation from~\eqref{S_ij}), 
the procedure leads to a configuration where such a sequence has length $0$,
as desired, see Figure~\ref{Lemma:constructconvenience3bis}. %\footnote{Explain what small dots are for.}
We define $\cl X = \bra S_{1\dots m},\ldots, S_1\ket$ so that
$\mu^\sharp_\torsionfreex:=\mu^\sharp_{S_{1\dots m}}\cdots\mu^\sharp_{S_1}$
by Proposition~\ref{prop:tiltviasimpletilt}.  By construction, $\torsionfreex
\subset\calA\cap\calV$ and $\ol{\mu^\sharp_\torsionfreex\calA}=
\ol{\calA}\subset\calD/\calV$.

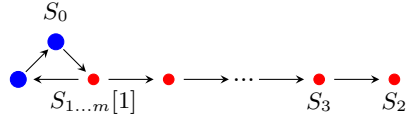
\begin{figure}[h]
	\begin{tikzpicture}
		(6,1) circle [radius=2pt];
		
		\filldraw [blue] 
		(0,0) circle [radius=3pt]
%		(-0.5,0.7) circle [radius=1pt]
		(0.5,0.5) circle [radius=3pt];
		
		\node at (1,-0.3) {\small $S_{1\dots m}[1]$};
		\node at (3,0) {$...$};
		\node at (4,-0.3) {\small $S_3$};
		\node at (5,-0.3) {\small $S_2$};
		\node at (0.5,0.9) {\small $S_0$};

		\filldraw [red]
		(1,0) circle [radius=2pt]
		(2,0) circle [radius=2pt]
		(5,0) circle [radius=2pt]
		(4,0) circle [radius=2pt];
		
		\draw [stealth-](0.2,0) -- (0.8,0);
		\draw [-stealth](1.2,0) -- (1.8,0);
		\draw [-stealth](2.2,0) -- (2.8,0);
		\draw [-stealth](3.2,0) -- (3.8,0);
		\draw [-stealth](4.2,0) -- (4.8,0);
		\draw [-stealth](0.1,0.1) -- (0.4,0.4);
		\draw [-stealth](0.6,0.4) -- (0.9,0.1);
%		\draw [-stealth](-0.35,0.65) -- (0.35,0.5);
	\end{tikzpicture}
	\caption{The result of mutating at $S_1, S_{12},\dots, S_{1\dots m}$ the $\ext$-quiver of Figure~\ref{Lemma:constructconvenience11}.}\label{Lemma:constructconvenience3bis}
\end{figure}

Suppose now that there exists another simple $S_1'\neq S_1$
satisfying~\eqref{firstcond}. Proceeding in  the same way we define~$S_i'$
using~\eqref{firstcond} and~\eqref{seccond} and define inductively $S'_{1\dots i}$ for
$i=1,\dots,m'$ as above. They are not in 
$\bra S_1,\dots, S_{m+1}\ket$, due to the second condition in \eqref{seccond}. %,hence tilting first at $\torsion$ or $\torsion'$ doesn't matter.  
Then, with $\cl X' = \bra S'_{1\dots m},\ldots, S'_1\ket$,
$$\mu^\sharp_{\torsionfreex'}\mu^\sharp_\torsionfreex\left(\calA\right)=(\mu^\sharp_{S'_{1\dots m'}}\cdots\mu^\sharp_{S'_{12}}\mu^\sharp_{S'_1})(\mu^\sharp_{S_{1\dots m}}\cdots \mu^\sharp_{S_{12}}\mu^\sharp_{S_1})(\calA)$$ 
is a convenient representative of $\ol{\calA}$ with respect to $S_0$. 
\end{proof}
\par
\begin{proof}[Proof of Proposition~\ref{prop:Caction}]
Here and in many cases in the sequel, all aspects of the proof are
visible in the situation with just two levels, i.e., $L=1$, and for expository
simplicity we restrict to this case and if needed we mention briefly in the end
how to proceed by induction. Suppose $\Re\lambda \geq 0$, the other case is analogous.
\par
For a rescaling by $e^{-\pi i\lambda}$ with $\lambda \in i\bR$ and for a rotation
($\lambda \in \bR$) so that the phase of no
simple in $\calA_0$ with non-zero central charge exceeds $(0,1]$, we just apply~(ii)
to the multi-scale central charge and all the other conditions still hold. It thus
suffices to consider general rotations, i.e.\ $\lambda \in (0,1]$, repeating
the process $\lfloor \lambda \rfloor$ many times.
\par
We denote by $\torsionfree^1_\lambda\subset\calA_1$ the torsion-free class induced
by the action of~$\lambda$ on~$(\calA_1,Z_1)$. Similarly we
let~$\ol{\torsionfree}_\lambda\subset\ol{\calA_0}$ be the analogous torsion-free
class for the~$\lambda$-action on~$(\ol{\calA_0}, \ol{Z_0})$. We can decompose the
tilt at~$\ol{\torsionfree}_\lambda$ as a composition of tilts at finitely many
simple torsion-free classes $\ol{\torsionfree}_i=\bra\ol{X_i}\ket\subset\ol{\calA_0}$
according to Proposition~\ref{prop:tiltviasimpletilt}.
\par
The subcategory $\torsionfree^1_\lambda$ is a torsion-free class in $\calA_0$ and
we first forward-tilt $\calA_0$ and $\calA_1$ at $\torsionfree^1_\lambda$. Then we
inductively \lq\lq lift\rq\rq\ the simple tilt at $\ol{X_i}$ at
$\mu^\sharp_{\ol{\torsionfree}_{i-1}}\ol{\calA_0}^{(i-1)}$ (with $\ol{\calA_0}^{(0)}
=\ol{\calA_0}$) to the upper level in the following way. If $S=X_1$, we apply
Lemma~\ref{le:exconvrep} and forward tilt $\calA_0$ to arrive at a convenient
representative~$\calA'_0=\mu^\sharp_{\cl X}\calA_0$ for~$S$. Second, we tilt forward
at~$S$, and third, we perform the backward tilt at $\cl X[1]$. At the end
we arrive at a heart~$\calA_0''$ on which $Z_0$ is still well-defined, and
with the following properties:
\begin{itemize}[nosep]
\item[(1)] $\calV_0'' = \calV_0$, since after each of the three steps
the simples annihilated by~$Z_0$ generate the same category;
\item[(2)] the quotient heart $\ol{\calA}_0''$ coincides with $\mu_S^\sharp \ol{\calA_0}$, thanks to~\eqref{eq:quotfwdtilt};
\item[(3)] the intersection $\calA''_0 \cap \cal \calV_0 = \calA_1$, since the forward
and backward tilts cancel on~$\calA_1$.
\end{itemize}
Repeating this process for all~$i$,  in  
the end we change the central charge as required by~(ii). Using~(1) at
each step ensures~(i), and~(2) together with~(3) at each step ensure~(iii).
This procedure indeed defines an action of~$\bC$, since the equivalence class
of the multi-scale stability condition is uniquely determined by the
conditions (i)--(iii). It obviously agrees with the $\bC$-action
on $\stab(\calV_0/\calV_1)$.
\end{proof}
\par
For later use we record that the ``lifts'' of the tilts at ${\ol{\torsionfree}_{i}}$
used in the previous proof are actually tilts at explicit
torsion-free classes~$\calF_i$.
\par
\begin{lemma}\label{le:glueing1}
Recall that  $\calD = \calD_{A_n}$, let $\calA_\bullet=(\calA_0,\calA_1)$ and
$Z_\bullet = (Z_0,Z_1)$. For $\lambda\in\bR_{\geq 0}$ such that \begin{itemize}
		\item $\lambda\cdot (\ol{\calA_0},\ol{Z_0})=(\mu^\sharp_{S_0}\ol{\calA_0}, e^{-\pi  \lambda}\ol{Z_0})$ in $\stab^\circ(\calD/\calV)$, and
		\item there are no indecomposables in $\calA_1$ with phase $\phi_{\ol{Z_1}}$ less than or equal to $\lambda$, 
	\end{itemize}
the action by $\lambda$ on $[\calA_\bullet,Z_\bullet]$ gives $[\calA'_\bullet,Z'_\bullet]$ with nested hearts
\[\calA'_0 \=\mu^\sharp_\torsionfree\calA_0, \quad \calA'_1 \=\calA_1, \]
for $\torsionfree=\bra S'_{01\dots m'}, \dots S'_{01},S_{01\dots m}, \dots S_{01}, S_0\ket$,
using the same notation as in proof of Lemma~\ref{lemm:constructconvenience}.
Moreover $\torsionfree\subset\bra S_0,\calA_1\ket\setminus\calA_1$.
\end{lemma}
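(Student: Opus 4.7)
The plan is to specialize the procedure defining the $\bC$-action in Proposition~\ref{prop:Caction} to the two-level setup of the lemma and recognize the outcome as a single forward tilt at the claimed $\torsionfree$. Under the two hypotheses the general procedure simplifies sharply: the assumption that no indecomposable in $\calA_1$ has $\ol{Z_1}$-phase at most $\lambda$ forces $\torsionfree_\lambda^1 = 0$ in the notation of the proof of Proposition~\ref{prop:Caction}, so the preliminary forward tilt at the lower level is trivial and $\calA_1' = \calA_1$ from the outset. At the quotient level $\ol{\calA_0}$ the action by $\lambda$ is by hypothesis a single simple forward tilt at $\ol{S_0}$, so the decomposition of $\ol{\torsionfree}_\lambda$ into simple tilts reduces to one step.

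Following the three-stage lift in the proof of Proposition~\ref{prop:Caction}, I first apply Lemma~\ref{lemm:constructconvenience} to pass to the convenient representative $\calA_0^{\mathrm{conv}} = \mu^\sharp_{\torsionfreex'}\mu^\sharp_{\torsionfreex}\calA_0$ with $\torsionfreex = \bra S_{1\ldots m},\ldots,S_1\ket$ and $\torsionfreex' = \bra S'_{1\ldots m'},\ldots, S'_1\ket$ as in that lemma; then perform $\mu^\sharp_{S_0}$; finally perform the backward tilts $\mu^\flat_{\torsionfreex[1]}$ and $\mu^\flat_{\torsionfreex'[1]}$. These backward tilts reverse the initial conveniently chosen simple tilts only inside the Serre subcategory $\calA_1$, so the outcome is a new heart $\calA_0'$ whose intersection with $\calV_1$ is again $\calA_1$, confirming the formula $\calA_1' = \calA_1$.

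To recognize the composition as a single forward tilt $\mu^\sharp_\torsionfree\calA_0$, I argue by induction along the two $A_{m+1}$- (resp.\ $A_{m'+1}$-) chains $S_0, S_1, \ldots, S_m$ and $S_0, S'_1, \ldots, S'_{m'}$ from Figures~\ref{Lemma:constructconvenience11}--\ref{Lemma:constructconvenience3bis}. Tilting forward at $S_{1\ldots k}$ shifts $S_{1\ldots k}$ to $S_{1\ldots k}[1]$ and replaces neighboring simples by the inductively defined extensions of Lemma~\ref{lemm:constructconvenience}; after tilting forward at $S_0$ and then backward at $S_{1\ldots k}[1]$, the simple $S_{1\ldots k}[1]$ in the intermediate heart is replaced by the iterated extension $S_{01\ldots k} \in \calA_0$ defined through~\eqref{S_ij} applied to the enlarged $A_{m+2}$-configuration $\{S_0, S_1, \ldots, S_m\}$. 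Collecting these simple objects for $k = 0, \ldots, m$ (and analogously for the primed branch) and applying Proposition~\ref{prop:tiltviasimpletilt}(2) to the resulting sequence of simple forward tilts identifies the entire three-stage composition with the forward tilt at the torsion-free class generated by $S_0, S_{01}, \ldots, S_{01\ldots m}, S'_{01}, \ldots, S'_{01\ldots m'}$, which is exactly $\torsionfree$. The containment $\torsionfree \subset \bra S_0, \calA_1\ket \setminus \calA_1$ is then direct on generators: each $S_{01\ldots k}$ (resp.\ $S'_{01\ldots k'}$) is an iterated extension of $S_0$ by objects of $\calA_1$, so lies in $\bra S_0, \calA_1\ket$ and surjects non-trivially onto $S_0 \notin \calA_1$.

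The main obstacle is the inductive bookkeeping needed to verify that the objects $S_0, S_{01}, \ldots, S_{01\ldots m}$ are consecutively simple in the successive intermediate hearts, so that the hypotheses of Proposition~\ref{prop:tiltviasimpletilt}(2) apply at each stage. This rests on the type-$A$ shape of the ext-quivers inherited from $\calD^3_{A_n}$, together with the fact that every simple in a reachable heart has at most two other simples with a non-trivial $\Ext^1$, which is what keeps the mutated ext-quivers under control in the inductive step of Figures~\ref{Lemma:constructconvenience11}--\ref{Lemma:constructconvenience3bis}.
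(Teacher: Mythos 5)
Your proof is correct and follows the same strategy as the paper's: specialize the three-stage lift of Proposition~\ref{prop:Caction} to the two-level setting, observe that the hypotheses make the lower-level tilt trivial, and then recognize the composition $\mu^\flat_{\cl X'[1]}\mu^\flat_{\cl X[1]}\mu^\sharp_{S_0}\mu^\sharp_{\cl X}\mu^\sharp_{\cl X'}\calA_0$ as a single forward tilt at $\torsionfree$ via Proposition~\ref{prop:tiltviasimpletilt}. Where the paper achieves the last step in one line by invoking the torsion-class identity $\bra S_0,S_{01}, \dots, S_{01\dots m}, S_1,\dots, S_{1\dots m}\ket = \bra S_1, S_{12},\dots, S_{1\dots m}, S_0\ket$ and applying $\mu^\flat_{\cl X[1]}$ to both resulting sequences, you unpack this into explicit inductive bookkeeping along the ext-quiver chains; this is the same argument, just written at a lower level of abstraction.
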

\par
\begin{proof}Suppose for simplicity $m>0$, $m'=0$ in the notation of
Lemma~\ref{lemm:constructconvenience}. We know that $\calA'_0=\mu^\flat_{\cl X[1]}
\mu^\sharp_{S_0}\mu^\sharp_{\cl X}\calA_0$ by the procedure described in the proof of
Proposition~\ref{prop:Caction}. Since 
\[\bra S_0,S_{01}, \dots, S_{01\dots m}, S_1,\dots, S_{1\dots m}\ket \= \bra S_1, S_{12},\dots,
S_{1\dots m}, S_0\ket\]
we deduce that 
\[ (\mu^\sharp_{S_{1\dots m}}\cdots\mu^\sharp_{S_{1}})  (\mu^\sharp_{S_{01\dots m}}\cdots
\mu^\sharp_{S_{01}}) \mu^\sharp_{S_0}(\calA) \= \mu^\sharp_{S_0}
(\mu^\sharp_{S_{1\dots m}} \cdots \mu^\sharp_{S_1})(\calA)\]
and hence $\mu^\sharp_\torsionfree (\calA)= \mu^\flat_{\cl X[1]} \mu^\sharp_{S_0}
\mu^\sharp_{\cl X}(\calA).$
\end{proof}
\par
We use the subsequent lemma as a preparation for
Proposition~\ref{prop:plumbassociative} below.
\par
\begin{lemma}\label{le:glueing_interm}
Let $\torsionfree_i$ for $i=1,\ldots,r$ be the torsion-free classes lifting the
classes $\ol{\torsionfree}_i \subset \ol{\calA_0}^{(i-1)}$ appearing in the proof of
Proposition \ref{prop:Caction} and explicitly described
by Lemma \ref{le:glueing1}. Then $\torsionfree_j \cap\torsionfree_{i}[1]
=\{0\}$ for any $j>i$. Moreover, for any $r' \leq r$, the result of the sequence
of forward-tilts at $\torsionfree_i$ of $\calA_0$ for $i=1,\ldots, r'$
is intermediate with respect to $\calA_0$.
\end{lemma}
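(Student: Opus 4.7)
My plan is to prove both claims together, using phase-disjointness of $Z_0$ as the main tool, with the second claim following by induction on $r'$ using the first. Write $\phi_i \in (0,1)$ for the phase of $\ol{Z_0}(\ol{X_i})$; by the construction in the proof of Proposition~\ref{prop:Caction}, $\ol{X_i}$ is the simple of $\ol{\calA_0}^{(i-1)}$ whose phase first reaches zero under $\lambda \mapsto e^{-\pi i \lambda}\ol{Z_0}$, so $(\phi_i)$ is strictly monotonic in $(0,1)$.

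For the first assertion, Lemma~\ref{le:glueing1} exhibits $\calF_i$ as generated by $S_0^{(i)}$ together with simples of $\calA_1$. Since $Z_0$ vanishes on $\calA_1$ and all these generators project to $\ol{X_i}$-multiples in $\calD/\calV$, every nonzero $X \in \calF_i$ satisfies $Z_0(X) \in \R_{>0}\,e^{\pi i \phi_i}$. Consequently $\calF_i[1]$ carries $Z_0$-phase $\phi_i+1 \in (1,2)$, which is disjoint from $\phi_j \in (0,1)$. Any $X \in \calF_j \cap \calF_i[1]$ must therefore satisfy $Z_0(X)=0$, hence $X \in \calV \cap \calA_0^{(j-1)} = \calA_1$, contradicting $\calF_j \cap \calA_1 = \emptyset$ from Lemma~\ref{le:glueing1}.

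For the second assertion I would induct on $r'$, realizing the cumulative tilt as $\mu^\sharp_\calF \calA_0$ for the torsion-free class $\calF = \bra \calF_1, \ldots, \calF_{r'}\ket \subset \calA_0$. The base case $r'=1$ is immediate since $\calF_1 \subset \calA_0$. For the inductive step, the hypothesis $\calA_0^{(r'-1)} = \mu^\sharp_\calG \calA_0$ with $\calG = \bra \calF_1, \ldots, \calF_{r'-1}\ket \subset \calA_0$ reduces via Proposition~\ref{prop:tiltviasimpletilt}(3) to showing $\calF_{r'} \subset \calA_0$. For any $X \in \calF_{r'}$, intermediateness of $\calA_0^{(r'-1)}$ with respect to~$\calA_0$ produces an $\calA_0$-cohomology triangle $G[1] \to X \to T$ with $G \in \calG$ and $T \in \calA_0$. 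Since $G[1]$ is a subobject of $X$ in the finite heart $\calA_0^{(r'-1)}$ and torsion-free classes there are closed under subobjects (Section~\ref{subsec:tilt}), we have $G[1] \in \calF_{r'}$. If $G \neq 0$, then the bottom step of its filtration by the $\calF_k$'s yields an object $Y \in \calF_k$ for some $k < r'$ with $Y[1] \in \calF_{r'} \cap \calF_k[1]$, contradicting the first claim. Hence $G=0$ and $X \in \calA_0$, which closes the induction.

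The main obstacle I expect is verifying that a nonzero $G \in \calG$ really produces, after shifting, a nonzero subobject lying in a single $\calF_k$, so that the first claim can be applied: this requires matching the torsion-free structure of $\calG$ inside $\calA_0$ with that of $\calF_{r'}$ inside $\calA_0^{(r'-1)}$, and tracking Harder--Narasimhan-type filtrations cleanly across the intermediate heart. The finiteness of hearts in the $A_n$-setting makes these decompositions available, so the overall argument should go through, but the combinatorial bookkeeping between the two torsion-pair structures is the delicate step.
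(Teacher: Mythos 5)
Your proof is correct in essence, and the argument for the first claim is genuinely different from the paper's. The paper proceeds structurally: using Lemma~\ref{le:glueing1} it observes that $\calF_i$ and $\calF_{i-1}$ are generated by objects $X$ and $Y$ that project to \emph{distinct} simples of the quotient heart, deduces $X\neq Y$ and $X\neq Y[1]$, checks that the allowed extensions by $\calA_1$ cannot match up, and then handles non-consecutive indices by an induction that also tracks $\torsionfree_{i-1}[1]\subset{}^\perp\torsionfree_i$. Your route is more uniform: since nonzero objects of $\calF_i$ have $Z_0$-image on the single ray $\R_{>0}e^{\pi i\phi_i}$ with $\phi_i\in(0,1)$ (this is exactly what $\torsionfree_i\subset\bra S_0,\calA_1\ket\setminus\calA_1$ from Lemma~\ref{le:glueing1} delivers, since $Z_0$ kills $\calA_1$ and must pick up at least one copy of $S_0$), the rays of $\calF_j$ and $\calF_i[1]$ are automatically disjoint, so the intersection claim holds for \emph{all} $i\neq j$ with no induction needed. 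This is cleaner and a bit stronger than what the lemma asserts.

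A few things to tighten. First, the $(\phi_i)$ are only \emph{weakly} monotone — distinct simples can share a phase — but you never actually use strict monotonicity, only that each $\phi_i\in(0,1)$. Second, $\calF_i$ is generated by $S_0^{(i)}$ together with \emph{extensions} of it by $\calA_1$-objects (the $S_{01\dots k}$'s), not by $\calA_1$-simples themselves; harmless for the $Z_0$-computation but worth phrasing correctly. Third, in the inductive step for the second claim, the passage from $G\neq 0$ to the contradiction via $Y[1]\in\calF_{r'}\cap\calF_k[1]$ does work, but only because you can arrange a filtration $0\neq G_1\subset\cdots\subset G=:G_m$ with each $G_j/G_{j-1}\in\bigcup_k\calF_k$ \emph{and} each $G/G_j\in\calG$ (closure of $\calG$ under extensions gives the latter); this is what makes $G_1[1]\to G[1]$ a genuine monomorphism in $\calA_0^{(r'-1)}$ rather than merely a map. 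You flagged this yourself as the delicate step, and it is, though it does go through. One could instead shortcut: $G[1]\in\calF_{r'}$ forces $Z_0(G)\in\R_{\leq 0}e^{\pi i\phi_{r'}}$, while $G\in\calA_0$ forces $Z_0(G)\in\chalfplane\cup\{0\}$; for $\phi_{r'}\in(0,1)$ these only meet at $0$, giving $G=0$ directly. The paper itself dismisses this second half as ``standard facts in tilting theory'', so your more detailed treatment is welcome.
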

\par
\begin{proof} By Lemma~\ref{le:glueing1} the class $\torsionfree_i$ is generated by
a simple object $X$ in $\calA^{(i-1)}$ together possibly with extensions of~$X$
with $\calA^{(i-1)}\cap \calV_1$. Similarly is $\torsionfree_{i-1}$ for an object $Y$,
with $\pi(X)\neq \pi(Y)$ in $\calD/\calV_1$, hence $X\neq Y$, and also $X\neq Y[1]$
(since the central charge defines a stability condition on the quotient). None of
the extensions of $X$ with $\calA_1$ can be in $\calV_1$, nor they can just be
extensions~of $Y[1]$ with $\calV_1$. Hence $\torsionfree_i \cap \torsionfree_{i-1}[1]
=\{0\}$ and $\torsionfree_{i-1}[1]\subset{^\perp{\torsionfree_i}}$. This is the
start for an inductive argument. In fact, we deduce that $\mu^\sharp_{\torsionfree_2}
\mu^\sharp_{\torsionfree_1}\calA_0\supset\torsionfree_2[1],\torsionfree_1[1]$. Now the
previous argument shows that $\torsionfree_3$ does not intersect
$\torsionfree_2[1]$ and $\torsionfree_1[1]$ non-trivially and we may proceed with the induction.
\par
The last part of the statement then follows from standard facts in tilting theory.
\end{proof}
\par

%%%%%%%%%%%%%%%%%%%%%%%%%%%%%%%%%%%%%%%%%%%%%%%%%%%%%%%%%%%%%%%%%%%%
\section{The topology on the space of multi-scale stability conditions}
%%%%%%%%%%%%%%%%%%%%%%%%%%%%%%%%%%%%%%%%%%%%%%%%%%%%%%%%%%%%%%%%%%%%%%

The goal of this section is to provide the space of multi-scale stability conditions
$\MStab^\circ(\calD^3_{A_n})$ with a natural topology so that the quotient by
autoequivalences acquires a complex structure (Section~\ref{subsec:complexstr})
and so that the further taking the quotient by the $\bC$-action gives a compact
space (Section~\ref{subsec:compact}) . 
The definition of neighborhoods is based on the \emph{plumbing} construction of 
Section~\ref{subsec:plumb} and explicitly given in Section~\ref{subsec:neigh}.
The name of the construction is derived from \cite{LMS}, see also
Section~\ref{sec:BCGGMforAn} where a plumbing construction is performed on Riemann
surfaces and where we will see that the constructions are analogous.
\par

%%%%%%%%%%%%%%%%%%
\subsection{Plumbing of stability conditions}\label{subsec:plumb}
%%%%%%%%%%%%%%%%%%
The plumbing construction takes as input a multi-scale stability condition
$(\cA_\bullet, Z_\bullet)$ and a collection $\bftau=(\tau_1,\ldots,\tau_L) \in -\bH^L$
and outputs an honest stability condition. More generally, we will
allow $\bftau$ to take the formal value $\tau_j = -i\infty$ for~$j$ in any fixed
subset $J \subset \{1,\ldots L\}$ and let $-\bH_\infty = -\bH \cup \{-i\infty\}$. The
result of the plumbing construction will then be a multi-scale stability condition
with~$|J|$ levels below
zero, the extreme case $\bftau = (-i\infty)^L$ being the identity, no plumbing at all.
Using all tuples  $\bftau$ with each entry of large (or infinite) imaginary part
and then allowing small deformations of the resulting generalized stability conditions
at each level will provide a neighborhood of $(\cA_\bullet,Z_\bullet)$.
\par
For simplicity we start with a multi-scale stability conditions
$(\cA_\bullet,Z_\bullet)$ with $L=1$ and let $\calV = \calV_1^Z$.
\par
\begin{prop} \label{prop:stabplumbing} Suppose that the representative
$(\cA_\bullet,Z_\bullet)$ of a multi-scale stability condition 
has precisely one level below zero. For $\tau \in -\bH$ there is a (honest)
stability condition $(\calA',Z') = \tau \ast (\cA_\bullet,Z_\bullet)$,
the \emph{plumbing of $(\cA_\bullet,Z_\bullet)$ with~$\tau$},
uniquely determined by the conditions
\begin{itemize}[nosep]
\item $(\calA' \cap \calV,Z'|_{K(\calV)}) = \tau \cdot(\calA_1,Z_1)$
for the usual $\bC$-action,
\item the quotient central charges agree $Z' = Z \in \Hom(K(\calD/\calV), \bC)$, \and
\item the hearts $\ol{\calA'} = \ol{\calA}_0$ coincide in $\calD/\calV$.
\end{itemize}
\end{prop}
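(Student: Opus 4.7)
The plan is to construct $(\calA', Z')$ explicitly from the data of $(\calA_\bullet, Z_\bullet)$ and $\tau$, then verify that it is a genuine Bridgeland stability condition and that it is unique. The three prescribed conditions determine the data at the level of the Grothendieck group and at the levels of $\calV$ and $\calD/\calV$ separately, so the task is to assemble this into a single heart/central-charge pair on~$\calD$.

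For the central charge, recall from Lemma~\ref{lem:serrenessV} that $\calA_1 \subset \calA_0$ is Serre and $K(\calV) = K(\calA_1)$. Since $\calA_0$ is a finite heart, we have the split short exact sequence $0 \to K(\calA_1) \to K(\calA_0) \to K(\ol{\calA_0}) \to 0$ on which $Z_0$ vanishes on the first summand. I would define $Z'$ as the unique linear functional on $K(\calD) = K(\calA_0)$ whose restriction to $K(\calA_1)$ equals $e^{-\pi i \tau} Z_1$ and which descends to $Z_0$ on $K(\calA_0)/K(\calA_1) = K(\calD/\calV)$. This automatically satisfies the second and (together with condition~(i) below) the first of the listed conditions.

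To construct the heart, compute $\tau \cdot (\calA_1,Z_1) = (\calA_1^\tau, e^{-\pi i \tau} Z_1) \in \Stab(\calV)$ via the usual $\bC$-action on~$\calV$, so that $\calA_1^\tau = \mu^\sharp_{\torsionfree_\tau}\calA_1$ for some torsion-free class $\torsionfree_\tau \subset \calA_1$. By Proposition~\ref{prop:tiltviasimpletilt} this tilt decomposes as a finite sequence of simple tilts $\mu^\sharp_{X_r}\cdots\mu^\sharp_{X_1}$ at simples in successive tilted hearts. I would mimic the proof of Proposition~\ref{prop:Caction} and \emph{lift} each such simple tilt one at a time to $\calA_0$: for each~$X_i$ pass via Lemma~\ref{lemm:constructconvenience} to a convenient representative by simple tilts within~$\calV$, then perform $\mu^\sharp_{X_i}$, then undo the convenient-representative tilts (shifted). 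By Lemma~\ref{le:exconvrep} each such step is $\calV$-compatible, intersects~$\calV$ in the newly tilted category, and has quotient $\ol{\calA_0}$ unchanged in $\calD/\calV$. The resulting heart $\calA'$ therefore satisfies $\calA' \cap \calV = \calA_1^\tau$ and $\ol{\calA'} = \ol{\calA_0}$, confirming conditions (i) and (iii).

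The hard part will be verifying that $(\calA',Z')$ is a genuine stability condition. One must check that $Z'$ sends every non-zero object of $\calA'$, equivalently every simple of $\calA'$, into $\chalfplane$. For simples in $\calA_1^\tau$ this is automatic since $\tau \cdot (\calA_1,Z_1)$ is a stability condition on~$\calV$. For the simples of $\calA'$ not lying in $\calV$, their images in $\calD/\calV$ are simples of $\ol{\calA_0}$ with $Z_0$-charge in $\chalfplane$, but $Z'$ on them differs from $Z_0$ by a $K(\calA_1)$-correction scaled by $e^{-\pi i \tau}$; here the hypothesis $\tau \in -\bH$ forces $|e^{-\pi i \tau}|=e^{\pi\Im\tau}<1$ and the explicit form of the sequence of simple tilts performed above (compare Lemma~\ref{le:glueing1}) controls the phases of these corrections so that the perturbed charge stays in $\chalfplane$. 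The support property and Harder--Narasimhan axiom for $(\calA',Z')$ follow from those of the two stability conditions $(\calA_1^\tau,e^{-\pi i \tau}Z_1)$ on~$\calV$ and $(\ol{\calA_0},Z_0)$ on $\calD/\calV$ by concatenating HN filtrations along the two-step filtration $\calV \subset \calD$. Finally, uniqueness follows because $Z'$ is forced by conditions (i)--(ii) and the short exact sequence on Grothendieck groups, while any $\calV$-compatible heart of $\calD$ is determined by its intersection with~$\calV$ and its essential image in $\calD/\calV$ via the glueing framework of \cite[Proposition~2.20]{antieau}.
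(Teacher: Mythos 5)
The central issue is that you have misidentified where the difficulty lies and reached for the wrong tool to construct the heart. Since $\torsionfree_\tau\subset\calA_1\subset\calA_0$ and $\calA_1$ is Serre in $\calA_0$ (Lemma~\ref{lem:serrenessV}), the class $\torsionfree_\tau$ is automatically closed under subobjects and extensions in~$\calA_0$ and hence is already a torsion-free class of~$\calA_0$ itself. The paper therefore simply sets $\calA'=\mu^\sharp_{\torsionfree_\tau}\calA_0$ in one step; no lifting is needed. The convenient-representative machinery of Lemma~\ref{lemm:constructconvenience} and Lemma~\ref{le:exconvrep} addresses the opposite problem — lifting a tilt at a simple $S\notin\calV$ from the quotient $\calD/\calV$ up to $\calD$, which is the delicate point in Proposition~\ref{prop:Caction}. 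Here your simple tilts are all at objects $X_i\in\calV$; such objects are already simple in $\calA_0$ because $\calA_1$ is Serre, so there is nothing to lift and the convenient-representative framework does not apply. Invoking it is not merely inefficient but a category error.

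Your verification that $Z'$ lands in $\chalfplane$ is also off track. You argue that the charges of simples of $\calA'$ not in $\calV$ acquire a $K(\calA_1)$-correction scaled by $e^{-\pi i\tau}$, and that the bound $\lvert e^{-\pi i\tau}\rvert<1$ controls the resulting phase. But no such magnitude estimate is available or needed: the paper defines $Z'=Z_0\oplus e^{-\pi i\tau}Z_1$ with respect to the splitting $K(\calA')=K(\ol{\calA'})\oplus K(\mu^\sharp_{\torsionfree_\tau}\calA_1)$ coming from~\eqref{eq:decomposition} applied to the simples of $\calA'$ (not of $\calA_0$). With that choice, a simple $S$ of $\calA'$ outside $\calV$ has $\pi_1([S])=0$, so $Z'(S)=\ol{Z_0}(\ol{S})\in\chalfplane$ exactly, while a simple inside gets $e^{-\pi i\tau}Z_1(S)\in\chalfplane$ because $\tau\cdot(\calA_1,Z_1)$ is a stability condition on $\calV$. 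There is no perturbation to bound. (Related to this, your prescription ``$Z'$ descends to $Z_0$ on $K(\calA_0)/K(\calA_1)$'' is not literally well-posed since $Z'$ does not vanish on $K(\calA_1)$; the clean formulation uses the explicit splitting as above.) Your concatenation argument for the HN and support properties is reasonable, and your uniqueness discussion via glueing is fine, but the two points above are genuine gaps.
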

\par
Note that the plumbing procedure depends on a chosen representative. For the
definition of the topology we will use that  the set
$\{ \tau \ast (\cA_\bullet,Z_\bullet), \, -\Im(\tau) > C\}$ for any fixed~$C$
does not depend on this representative, since the change of representative
results in translation of the corresponding~$\tau$ by a real number.
We use that 
\be \label{eq:decomposition}
K(\calA_0) = K(\ol{\calA_0}) \oplus K(\calA_1) \quad \text{given by} \quad
\Sim(\calA_0) = \Sim(\ol{\calA_0}) \,\coprod\, \Sim(\calA_1) ,
\ee
(see e.g.\ the survey \cite[Proposition~2.9]{psarou}) to define two projections
\be \pi_0:K(\calD)\simeq K(\calA_0)\to K(\ol{\calA_0}), \quad
\pi_1: K(\calD)\simeq K(\calA_0)\to K(\calA_1)\,.
\ee
Using these projections we combine central charges as
\bes
Z_0 \oplus Z_1 \,:= \,Z_0 \circ \pi_0  + Z_1 \circ \pi_1\,.
\ees
\par
\begin{proof} The heart of $\tau \cdot(\calA_1, Z_1)$ equals $\mu^\sharp_{\calF} \calA_1$
for some torsion-free class $\calF \subset \calA_1\subset \calA_0$. We take $\calA' =
\mu^\sharp_{\calF} \calA_0$ and by \cite[Lemma~5.6]{BMQS} the last condition holds. Since
$\mu^\sharp_{\calF} \calA_1 \subset \calA'$ and since~$\calA'$ is a finite heart thanks to
$\calD = \calD^3_{A_n}$ we can use the observation~\eqref{eq:decomposition}
to get the decomposition $K(\calA'/ \mu^\sharp_{\calF} \calA_1) \oplus K(\mu^\sharp_{\calF} \calA_1)$
and define $Z = Z_0 \oplus e^{-\pi i \tau} Z_1$. This is indeed a central
charge, since $Z(S) \in \ol{\bH}$ for all simples $S \in \calA_1$ by the hypothesis
on~$Z_0$ and~$Z_1$.
\end{proof}
\par
Next we generalize to the action of $\bftau = (\tau, -i\infty, \ldots, -i\infty)$
on a multi-scale stability condition $(\cA_\bullet,Z_\bullet)$. In this case we apply
Proposition~\ref{prop:stabplumbing} to the first two levels $( (\calA_0,
\calA_1),(Z_0,Z_1))$ and record as $\bftau$-image the tuple
\be
(\tau, -i\infty, \ldots, -i\infty) \cdot (\cA_\bullet,Z_\bullet) \= 
(\calA', \calA_2,\calA_3 \ldots,Z',Z_2,Z_3,\ldots)\,,
\ee
i.e. the top two levels have been merged to obtain a multi-scale stability
condition with $L-1$ levels below zero.
\par\medskip
This construction also gives a recipe for the \emph{plumbing
$\bftau \ast (\cA_\bullet,Z_\bullet)$
of a multi-scale stability condition $(\cA_\bullet,Z_\bullet)$
by a general~$\bftau \in -\bH_\infty^L$}. We take~$j$ to be  the highest index
with $\tau_j \neq -i\infty$. Then we apply the preceding construction to the
multi-scale stability condition $( \calA_{\geq j},Z_{\geq j})$ on $\calV_j^Z$ and iterate
with the action of the remaining coordinates $\bftau' = (\tau_1, \cdots,
\widehat{\tau_j}, \cdots)$. It will turn out that the plumbing procedure
is not quite independent of the order of the levels at which we perform the
plumbing step, only nearly so. The reason is that already one-level plumbing
and rotation are only nearly compatible. We need a quantitative version of
this fact.
\par
\begin{prop} \label{prop:plumbassociative}
Let $(\cA_\bullet,Z_\bullet)$ be a fixed representative of a multi-scale stability
condition with $L=1$. Let $\tau \in -\HH$, $\lambda\in\bC$, with 
\begin{equation}\label{realpart}
0\leq \Re\lambda,\quad 0 \leq \Re\tau, \quad \text{and}\quad
0\leq \Re (\lambda+\tau)<1.
\end{equation}
Then the hearts of the two stability conditions
\be \label{eq:bothhearts}
\widetilde{\sigma}:=(\widetilde{\calA},\widetilde{Z}):=\lambda \cdot
(\tau \ast (\cA_\bullet,Z_\bullet)) \quad \text{and} \quad
\widehat{\sigma}:=(\widehat{\calA},\widehat{Z})
:=\tau \ast (\lambda \cdot (\cA_\bullet,Z_\bullet))
\ee
are intermediate hearts with respect to~$\calA_0$, and the difference of the
central charges may be coarsely estimated by
\be \label{eq:Zdiffestimate1}
\left|(\widehat{Z}-\widetilde{Z})(S_j)\right| \,\leq \, \ell \cdot
|e^{-\pi i (\lambda+\tau)}|  \sum_{\substack{S_i\in\calA_1\\\text{simple}}} |Z_1(S_i)|
\ee
for any simple $S_j \in \Sim(\calA_0)$, where~$\ell$ is the number of classes
of indecomposables in~$K(\ol{\calA_0})$.
\end{prop}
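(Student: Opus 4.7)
The plan is to unravel both compositions using the explicit descriptions of plumbing from Proposition~\ref{prop:stabplumbing}, of the $\bC$-action on multi-scale stability conditions from Proposition~\ref{prop:Caction}, and of the \lq\lq lifted\rq\rq\ torsion-free classes from Lemma~\ref{le:glueing1}, and then to compare the two resulting central charges on the canonical generating set $\Sim(\calA_0) = \Sim(\ol{\calA_0}) \sqcup \Sim(\calA_1)$ of $K(\calD)$ provided by~\eqref{eq:decomposition}.

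Concretely, I would first write $\tau \ast (\calA_\bullet, Z_\bullet) = (\mu^\sharp_\calF \calA_0, Z_0 \oplus e^{-\pi i \tau} Z_1)$, where $\calF \subset \calA_1$ is the torsion-free class induced by $\tau$, and then describe $\widetilde{\sigma}$ as the result of the usual $\bC$-action by $\lambda$ on this stability condition: this adds a forward tilt at a torsion-free class of $\mu^\sharp_\calF \calA_0$ determined by $\lambda$ acting on the quotient $(\ol{\calA_0}, \ol{Z_0})$. On the other side, Proposition~\ref{prop:Caction} together with Lemma~\ref{le:glueing1} expresses $\lambda \cdot (\calA_\bullet, Z_\bullet)$ as the multi-scale stability condition with new top-level heart $\calA_0'' = \mu^\sharp_{\calG'} \calA_0$, where $\calG'$ is the explicit torsion-free class generated by the simple $S_0 \in \calA_0 \setminus \calA_1$ together with the extensions $S_{0\ldots k}, S'_{0\ldots k}$ involving simples of $\calA_1$. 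A further plumbing by $\tau$ then forward-tilts at the induced $\calF'' \subset \calA_1'' \subset \calA_0''$. The intermediate property of both hearts with respect to $\calA_0$ then follows from Lemma~\ref{le:glueing_interm} combined with the hypothesis $0 \leq \Re(\lambda+\tau) < 1$, which guarantees that none of these successive forward tilts pushes any simple past phase $1$ relative to $\calA_0$.

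For the central charge estimate, since both $\widetilde{Z}$ and $\widehat{Z}$ are determined as $\bZ$-linear maps on $K(\calD) = K(\calA_0)$, it suffices to check the difference on simples $S_j \in \Sim(\calA_0)$. For $S_j \in \Sim(\calA_1)$ both operations rescale $Z_1(S_j)$ by the same total factor $e^{-\pi i(\lambda+\tau)}$, so the difference vanishes exactly. For $S_j \in \Sim(\calA_0) \setminus \Sim(\calA_1)$ the two central charges differ only because the splitting $K(\calA_0) = K(\ol{\calA_0}) \oplus K(\calA_1)$ used in the plumbing formula $Z_0 \oplus e^{-\pi i \tau} Z_1$ is taken with respect to different intermediate hearts: in $\widetilde{\calA}$ the splitting comes from $\calA_0$ (since $\calF \subset \calA_1$ does not touch $S_j$ at the top level), while in $\widehat{\calA}$ the splitting is read off from $\calA_0'' = \mu^\sharp_{\calG'}\calA_0$, where the simple lifting $\bar S_j$ has been twisted by a combination of $\calA_1$-simples coming from $\calG'$. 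Writing $[S_j]$ in terms of the simples of $\calA_0''$, the discrepancy is thus a sum of at most $\ell$ multiplicities of simples of $\calA_1$, each contributing $|e^{-\pi i(\lambda+\tau)} Z_1(S_i)|$, and the bound~\eqref{eq:Zdiffestimate1} follows.

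The main obstacle will be matching the two representatives of the quotient simple $\bar{S_j}$ produced by the different orders of operations and verifying that their difference in $K(\calD)$ lies in $K(\calA_1)$ with class-multiplicities bounded by $\ell = \mathrm{rank}\,K(\ol{\calA_0})$. This requires combining Lemma~\ref{lemm:constructconvenience} on convenient representatives with the action of spherical twists on $K(\calD)$ recorded in~\eqref{eq:Phiinduced}, and crucially exploits the $A_n$-type restriction, which bounds the number of simples of $\calA_1$ having a non-trivial extension with $S_j$.
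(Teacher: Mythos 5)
Your outline follows the same route as the paper's proof: unravel both orders of operations using Proposition~\ref{prop:Caction}, Lemma~\ref{le:glueing1} and Lemma~\ref{le:glueing_interm}; note that on simples in $\calA_1$ both operations rescale by the same factor $e^{-\pi i(\lambda+\tau)}$ so the difference vanishes exactly; and locate the entire discrepancy for $S_j\not\in\Sim(\calA_1)$ in the mismatch between the splittings $K(\calA_0/\calA_1)\oplus K(\calA_1)$ (used when plumbing first) and $K(\mu^\sharp_\torsionfree\calA_0/\calA_1)\oplus K(\calA_1)$ (used when rotating first). The ``main obstacle'' you flag at the end is precisely the step the paper carries out concretely: the change of basis $[\mu^\sharp_\torsionfree]^{-1}\colon K(\mu^\sharp_\torsionfree\calA_0)\to K(\calA_0/\calA_1)\oplus K(\calA_1)$ is block lower-triangular with the identity on the $K(\calA_1)$ block and an off-diagonal block $B_{1\bar 0}$, so that $\widehat{\pi}_1=\pi_1+B_{1\bar 0}\pi_0$ while $Z_0\circ\widehat{\pi}_0=Z_0\circ\pi_0$ (as $Z_0$ factors through the quotient), giving $(\widehat{Z}-\widetilde{Z})(S_j)=e^{-\pi i(\lambda+\tau)}\sum_i b_{ij}Z_1(S_i)$.

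Two small points where your sketch should be sharpened if you were to complete the argument. First, the paper reduces first to the case $(\star)$ of a single $Z$-stable class crossing phase $\Re\lambda$, where $|b_{ij}|\le 1$ and no $\ell$-factor is needed, then handles the general case by composing finitely many such tilts; the factor $\ell$ bounds the entries of the product $B_{1\bar 0}=\prod_i B_{1\bar 0}^{(i)}$. Second, your claim that the $\widetilde{\sigma}$ side uses the splitting from $\calA_0$ is correct, but the reason is slightly more than ``$\calF\subset\calA_1$ does not touch $S_j$'': the tilt at $\calF_\tau\subset\calA_1$ changes the identification of simples inside $K(\calA_1)$ as well, and one uses that $Z_1$ is defined on the whole of $K(\calA_1)$ and that the composition with $\pi_0$ is unchanged because the change of basis acts trivially on $K(\calA_0)/K(\calA_1)$. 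These are exactly the inputs the paper makes explicit, so your approach is the correct one.
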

\par
We recall from \cite[Proposition 7.4]{scattering} that the local homeomorphism
given by the forgetful map $\Stab(\calD) \to \Hom(K(\calD),\bC)$ is actually
injective when restricted to all hearts that are intermediate with respect
to a given heart~$\calA_0$, i.e., in $[\calA_0,\calA_0[1]]$. Consequently, to show
that $\widetilde{\sigma}$ and $\widehat{\sigma}$ are nearby it suffices to
estimate the differences of the central charges if $\widetilde{\calA},
\widehat{\calA} \in [\calA_0,\calA_0[1]]$.
\par
\begin{proof} The result of plumbing is given by definition as
$(\calA, Z):=\tau \ast (\cA_\bullet,Z_\bullet)$ with 
	\[\label{Z_plumbed}\begin{aligned}
		&\calA=\mu_{\torsionfree_\tau}^\sharp\calA_0\supset\mu_{\torsionfree_\tau}^\sharp\calA_1,\quad \torsionfree_\tau=\bra E\in \calA_1,\, Z_1\text{-semistable s.t. }\phi_{Z_1}(E)\leq \Re\tau\ket\\
		&Z := \ol{Z_0} \oplus e^{-i\pi\tau} \cdot Z_1\,.
	\end{aligned}\]	
We first consider the case where
\begin{itemize}
\item[($\star$)] there is exactly one isomorphism class $[S_0]\in K(\calA)$ of
a $Z$-stable object in $\calA$ with phase $0<\phi_Z(S_0)\leq \Re \lambda$, and
moreover $[S_0] \not\in K(\calV)$\,. 
\end{itemize}
(Note that $S_0$ must be simple in $\calA$.) In this case the stability condition
$\widetilde{\sigma}$ is given by $\widetilde{Z} =e^{-i\pi\lambda}Z$ and
$\widetilde{\calA} =\mu^\sharp_{S_0}\calA = \mu^\sharp_{\widetilde{\calF}}\calA_0$
with $\widetilde{\calF} = \langle \calF_\tau, S_0 \rangle$. In particular 
$\widetilde{\calA} \in [\calA_0,\calA_0[1]]$.
\par
On the other hand the heart $\widehat{\calA}$ is obtained by $\tau$-plumbing 
the multi-scale heart $(\calA_1 \subset \mu^\sharp_\torsionfree\calA_0)$,
where $\torsionfree=\bra S_0,S_{01},\dots\ket\subset \calA_0$ is explicitly
described in Lemma~\ref{le:glueing1}, since assumption~$(\star)$ implies that the
torsion-free class in~$\ol{\calA_0}$ of objects with phase $0<\phi_{\ol{Z_0}}\leq
\Re \lambda$ is generated by $\ol{S_0}$. Consequently,
\bes \widehat{\calA}\=\mu^\sharp_\cG \mu^\sharp_\torsionfree\calA_0, \quad
\text{where} \quad \cG \= \bra E\in \calA_1,\, Z_1\text{-semistable s.t. }\phi_{Z_1}(E)\leq \Re\tau\ket
\ees
Since $\cG \subset \mu^\sharp_\torsionfree\calA_0\cap\calV_1$ and
$\mu^\sharp_\torsionfree\calA_1 = \calA_1$ we deduce  $\cG \cap\torsionfree[1]
=\{0\}$ and consequently (by the same arguments as in Lemma~\ref{le:glueing_interm})
we deduce $\calA_0\leq \mu^\sharp_\torsionfree\calA\leq
\widehat{\calA}  = \mu^\sharp_\cG \mu^\sharp_\torsionfree\calA_0 \leq \calA_0[1]$
in the partial order from Section~\ref{sec:fundamental}.
\par
To compare central charges note that the plumbing procedure happens over two
different decompositions of $K(\calD)$: one induced by 	$K(\calA_0/\calA_1)
\oplus K(\calA_1)$, the other induced by $K(\mu_\torsionfree^\sharp\calA_0/\calA_1)
\oplus K(\calA_1)$. The change of basis
$K(\mu^\sharp_\torsionfree\calA_0) \simeq K(\calA_0)=K(\calA_0/\calA_1)\oplus
K(\calA_1)$ has the  form of a block lower-triangular matrix
\[ \left[\mu_{\torsionfree}\right]^{-1}=\begin{pmatrix} C_{n-k} & 0 \\
B_{1\ol{0}} & \mathbf{1}_k\end{pmatrix},	\]
where  the entries of $B_{1\ol{0}}=(b_{ij})_{ij}$ have  absolute value at most~$1$,
where we hardly control the block~$C_{n-k}$, and where the block~$\mathbf{1}_k$
expresses that $\calV$ is preserved. Using the projections $\pi_0$ and $\pi_1$
onto the summands $K(\calA_0/\calA_1)\oplus K(\calA_1)$ we find
\[\widetilde{Z} \=e^{-\pi i \lambda}\ol{Z_0}\circ\pi_0 + e^{-\pi i (\lambda+\tau)}
Z_1\circ\pi_1 \]
and
\ba \widehat{Z}&\= e^{-\pi i \lambda}Z_0\circ\widehat{\pi}_0
+ e^{-\pi i (\tau+\lambda)}Z_1\circ\widehat{\pi}_1\\
&\ =e^{-\pi i \lambda}Z_0\circ\pi_0 + e^{-\pi i (\tau+\lambda)}Z_1\circ(\pi_1+B_{1\ol{0}}\pi_0),
\ea
where we see $B_{1\ol{0}}$ as a map $K(\calA_0/\calA_1)\to K(\calA_1)$. 
Now consider the simples in $\calA_0$. For $S_j\in\calA_1$, the two expressions
agree.  For $S_j$ a simple of $\calA_0$ not in $\calA_1$, we find
\bes\label{almost_comm_1}
\left|(\widehat{Z}-\widetilde{Z})(S_j)\right|
\= \Bigl|\sum_{\substack{S_i\in\calA_1\\\text{simple}}}(e^{-\pi i (\lambda+\tau)}b_{ij})Z_1(S_i)\Bigr|
\,\leq\,  |e^{-\pi i (\lambda+\tau)}|%\ell  
\cdot \sum_{\substack{S_i\in\calA_1\\\text{simple}}} |Z_1(S_i)|.
\ees
\par
We now drop the assumption ($\star$) and allow for multiple indecomposables
$X\in \calA$ with $0<\phi_Z(X)\leq \Re\lambda$ . 
\par 
The assumption~\eqref{realpart}
still guarantees that $\widetilde{\calA}\in\left[\calA_0,\calA_0[1]\right]$, and
$\widetilde{Z}=e^{-\pi i \lambda}\ol{Z_0}\circ\pi_0 + e^{-\pi i (\lambda+\tau)}Z_1
\circ\pi_1$, as before.
\par
The result of $\lambda \cdot (\calA_\bullet, Z_\bullet)$ is a multi-scale stability 
condition $\sigma'$ with nested hearts $\calA_1' \subset \calA_0'$ that can be 
explicitly obtained as in the proof of Proposition~\ref{prop:Caction} by performing
a forward-tilt at $\torsionfree_\lambda^1$ and  a sequence of forward-tilts at
torsion-free classes $\torsionfree_i$ described in  Lemma~\ref{le:glueing1} and
Lemma~\ref{le:glueing_interm}. At each step, at the 0 level, the matrix 
of the change of basis has the form of a block lower triangular matrix, with a 
block $B_{1\bar{0}}= \prod_i B_{1\bar{0}}^{(i)}$, whose entries have absolute 
value $\leq \ell'_i$, bounded by the number $\ell'_i$ of classes of indecomposables 
in $\torsionfree^1_\lambda$ or $\torsionfree_i$. Lemma~\ref{le:glueing_interm}
guarantees that the heart~$\calA_0'$ is intermediate with respect to $\calA_0$,
and so is the heart of $\tau*\sigma'$ thanks to assumption \ref{realpart}.
Similarly to \eqref{almost_comm_1}, we obtain
\[\label{almost_comm_2}
\left|(\widehat{Z}-\widetilde{Z})(S_j)\right| \leq  |e^{-\pi i (\lambda+\tau)}|%\ell  
	\cdot \ell\sum_{\substack{S_i\in\calA_1\\\text{simple}}} |Z_1(S_i)|,
\]
where $\ell$ is the number of classes of indecomposables in $K(\calA_0)$. 
Last, the case $\Re\lambda=0$ is just easier, and the argument above shows
that in such a  case $\lambda\cdot(\tau\ast(\calA_\bullet,Z_\bullet))
=\tau\ast(\lambda\cdot(\calA_\bullet,Z_\bullet))$.	
\end{proof}
\par
\begin{rem} \label{rem:plumbpath}
The same observation shows that the plumbing (with fixed parameter
$\tau \in -\bH$) of a path~$\gamma \in \MStab^\circ(\calD^3_{A_n}) \setminus
\Stab^\circ(\calD^3_{A_n})$
is not continuous. Discontinuities occur when some simple (not at bottom level)
is tilted. However the size of the jumps decreases with~$|e^{-\pi i \tau}|$. More
precisely, suppose that $L=1$ and that~$\gamma$ is a path for which at precisely one
value~$t_0 \in [0,1]$ such a tilt occurs. Then the hearts of the two stability conditions
\bes
{\sigma}^+:=({\calA}^+,{Z}^+):=\lim_{t \to t_0^+}
(\tau \ast \gamma(t)) \quad \text{and} \quad
{\sigma}^-:=({\calA}^-,{Z}^-):=\lim_{t \to t_0^-}
(\tau \ast \gamma(t))
\ees
are intermediate hearts with respect to the top level heart~$\calA_0$ of
$\lim_{t \to t_0^-}\gamma(t_0)$
and the difference of the central charges may be coarsely estimated by
\be \label{eq:Zdiffestimate2}
\left|({Z}^+-{Z}^-)(S_j)\right| \,\leq \, \ell \cdot
|e^{-\pi i \tau}|  \sum_{\substack{S_i\in\calA_1\\\text{simple}}} |Z_1(S_i)|
\ee
for any simple $S_j \in \Sim(\calA_0)$, where~$\ell$ is the number of classes
of indecomposables in~$K(\ol{\calA_0})$. The proof is exactly the same as for
the previous proposition.
\end{rem}
\par
Let again $0 \leq \Re(\tau) < 1$ and decompose $\tau = \tau_R + i \tau_I$ into
its real and
imaginary part. We observe that the plumbing in Proposition~\ref{prop:Caction}
can be viewed a composition of three steps: First we apply the action of $\tau_R$,
resulting in a tilt at a torsion-free class $\calF \subset \calA_1$ and turning~$Z_1$
by $e^{-\pi i \tau_R}$ resulting in some other representative
$(\calA_\bullet^{\Re(\tau)}, Z_\bullet^{\Re(\tau)})$ of the multi-scale stability condition.
Second we rescale~$e^{-\pi i \tau_R}Z_1$ by $e^{\pi \tau_I}$, and finally we form
the direct sum~$Z$ and drop the lower levels to get an honest stability condition.
The observation that $[\mu^\sharp_\torsionfree]^{-1}$ in the previous proof
preserves $\calV_1$ and the first step just described does preserve~$\calV_1$ as
well, together imply the following corollary where we relax the bound for $\Re(\tau)$
appearing in Proposition~\ref{prop:plumbassociative}.
\par
\begin{cor} \label{cor:plumbassociative}
Let $(\cA_\bullet,Z_\bullet)$ be a fixed representative of a multi-scale stability
condition with $L=1$. Let $\tau \in -\HH$, $\lambda\in\bC$, with
$0 \leq \Re(\lambda) < 1$.
Then the hearts of the two stability conditions
\bes% \label{eq:bothhearts}
\widetilde{\sigma}:=(\widetilde{\calA},\widetilde{Z}):=\lambda \cdot
(\tau \ast (\cA_\bullet,Z_\bullet)) \quad \text{and} \quad
\widehat{\sigma}:=(\widehat{\calA},\widehat{Z})
:=\tau \ast (\lambda \cdot (\cA_\bullet,Z_\bullet))
\ees
are intermediate hearts with respect to~$\calA_0^{\Re(\tau)}$ and the difference
of the central charges may be coarsely estimated as in~\eqref{eq:Zdiffestimate1}.
\end{cor}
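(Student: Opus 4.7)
The plan is to reduce to Proposition~\ref{prop:plumbassociative} by absorbing $\Re(\tau)$ into a change of representative at the lower level. Writing $\tau=\tau_R+i\tau_I$ with $\tau_R=\Re(\tau)$ and $\tau_I=\Im(\tau)<0$, I would invoke the three-step decomposition of the plumbing procedure recalled in the paragraph preceding the corollary: the first step amounts to tilting $\calA_1$ at the torsion-free class $\calF_{\tau_R}=\langle E\in\calA_1 \text{ semistable with }\phi_{Z_1}(E)\leq\tau_R\rangle$ and replacing $Z_1$ by $e^{-\pi i\tau_R}Z_1$, yielding another representative $(\calA_\bullet^{\tau_R},Z_\bullet^{\tau_R})$ of the same equivalence class of multi-scale stability condition, with top heart $\calA_0^{\Re(\tau)}=\mu^\sharp_{\calF_{\tau_R}}\calA_0$. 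Crucially, because $\calF_{\tau_R}\subset\calA_1\subset\calV_1$, this step preserves the subcategory~$\calV_1$, so $\calV_1^{Z^{\tau_R}}=\calV_1$.

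The second step is the observation that
\[
\tau\ast(\calA_\bullet,Z_\bullet)\=(i\tau_I)\ast(\calA_\bullet^{\tau_R},Z_\bullet^{\tau_R})
\]
as honest stability conditions on~$\calD$ (this is precisely steps~2 and~3 of the decomposition applied to the new representative, since $\Re(i\tau_I)=0$). Analogously, after choosing a compatible representative of $\lambda\cdot[\calA_\bullet,Z_\bullet]$, one has $\tau\ast(\lambda\cdot(\calA_\bullet,Z_\bullet))=(i\tau_I)\ast(\lambda\cdot(\calA_\bullet^{\tau_R},Z_\bullet^{\tau_R}))$. Here I would use that changing representative at the lower level only shifts the plumbing parameter by a real number (as noted after Proposition~\ref{prop:stabplumbing}), and since $i\tau_I$ is purely imaginary while the ambiguity is real, the two choices of representative produce projectively equivalent honest stability conditions that can be made equal by the appropriate canonical choice.

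Third, I would apply Proposition~\ref{prop:plumbassociative} directly to the representative $(\calA_\bullet^{\tau_R},Z_\bullet^{\tau_R})$ with plumbing parameter $\tau':=i\tau_I\in-\HH$ and the same $\lambda$. All three hypotheses of that proposition are met: $\Re(\tau')=0\geq 0$, $\Re(\lambda)\geq 0$, and $\Re(\lambda+\tau')=\Re(\lambda)<1$. The conclusion of the proposition then states that the two hearts $\widetilde\calA$ and $\widehat\calA$ are intermediate with respect to the top heart of the chosen representative, namely $\calA_0^{\Re(\tau)}$, which is exactly the claim of the corollary. The estimate~\eqref{eq:Zdiffestimate1} translates without change: $|e^{-\pi i(\lambda+\tau')}|=|e^{-\pi i\lambda}|\,e^{\pi\tau_I}=|e^{-\pi i(\lambda+\tau)}|$ since $|e^{-\pi i\tau_R}|=1$, and the simples of $\calA_1^{\tau_R}$ are rotations of those of $\calA_1$ so $\sum|Z_1^{\tau_R}(S_i^{\tau_R})|=\sum|Z_1(S_i)|$, and the number~$\ell$ of indecomposables in $K(\ol{\calA_0})=K(\ol{\calA_0^{\tau_R}})$ is unchanged (as the quotient heart is preserved).

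The main obstacle I anticipate is the bookkeeping in the second step, namely verifying that the $\lambda$-action of Proposition~\ref{prop:Caction} commutes with the change of representative $(\calA_\bullet,Z_\bullet)\mapsto(\calA_\bullet^{\tau_R},Z_\bullet^{\tau_R})$ up to a discrepancy that is harmless for $(i\tau_I)$-plumbing. This is where the observation emphasized by the authors becomes essential: both the change-of-basis matrix $[\mu^\sharp_{\calF_{\tau_R}}]^{-1}$ and the first plumbing step act trivially on the $K(\calA_1)$-summand and preserve~$\calV_1$. Consequently, the induced stability condition on $\calD/\calV_1$ descends identically from the two representatives, and the only freedom in $\lambda\cdot(\calA_\bullet^{\tau_R},Z_\bullet^{\tau_R})$ relative to the representative $(\lambda\cdot(\calA_\bullet,Z_\bullet))^{\tau_R}$ implicit in the decomposition is a scalar rotation of the lower central charge, which is absorbed into the choice of pure imaginary $\tau'$ without altering the resulting honest stability condition.
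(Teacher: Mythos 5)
Your proof is essentially the same as the paper's intended argument. The paper gives only a sketch (the paragraph between the remark on the three-step decomposition and the corollary statement), citing the two observations that the first step and the change-of-basis matrix both preserve $\calV_1$; your proposal fills in exactly the reduction the paper has in mind, namely passing to the representative $(\calA_\bullet^{\tau_R},Z_\bullet^{\tau_R})$ and plumbing with the purely imaginary $\tau'=i\tau_I$ so that the hypothesis $0\le\Re(\lambda+\tau')<1$ of Proposition~\ref{prop:plumbassociative} is met. Your verification of the two representative-change identities via the three determining conditions of Proposition~\ref{prop:stabplumbing} is sound, since the lower-level $\bC$-action satisfies $(i\tau_I)\cdot(\tau_R\cdot(\calA_1,Z_1))=\tau\cdot(\calA_1,Z_1)$ and the quotient data are unaffected.

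One small inaccuracy: the simples of $\calA_1^{\tau_R}=\mu^\sharp_{\calF_{\tau_R}}\calA_1$ are \emph{not} rotations of the simples of $\calA_1$; a simple forward tilt replaces some $S$ by $S[1]$ and some $T$ by cones $\mathrm{Cone}(S\to S[1]\otimes\Ext^1(T,S)^\ast)[-1]$, so in general $\sum_i|Z_1^{\tau_R}(S_i^{\tau_R})|\ne\sum_i|Z_1(S_i)|$. What one actually obtains from applying the proposition to the new representative is the bound with the simples of $\calA_1^{\tau_R}$ in place of those of $\calA_1$; this is still ``a coarse estimate as in~\eqref{eq:Zdiffestimate1}'' (the two sums are comparable up to a constant depending only on~$n$, since the hearts are finite), which is all the corollary claims, but the literal equality you assert is false. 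This does not affect the correctness of the overall argument.
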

\par
\begin{rem} For $\lambda \in 2\bZ$ and any $\tau \in -\bH$ plumbing and
the $\lambda$-action commute, i.e.\ the hearts~$\widetilde{\sigma}$ and
$\widehat{\sigma}$ from~\eqref{eq:bothhearts} agree.
\end{rem}
\par
%%%%%%%%%%%%%%%%%
\subsection{Neighborhoods in the space $\MStab^\circ(\calD^3_{A_n})$.}
\label{subsec:neigh}
%%%%%%%%%%%%%%%%
We work here with a representative $(\cA_\bullet,Z_\bullet)$ of a multi-scale
stability condition and let 
$$\bfdelta = (\delta_1,\ldots,\delta_L)$$ be
a tuple of (small) positive real numbers where~$\delta_j$ will control the size of
plumbing of level~$j$. Moreover we fix a collection of positive real numbers
$$\bfeps \= (\ve_J) \quad  \text{for any} \quad J\subset \{0,\ldots,L\}.$$
%e.g., for $J=\{0,1\}$, we write $\epsilon_{01}$ for $\epsilon_J$. 
The following
Definition ~\ref{def:bdneighborhood} captures the idea that neighborhoods of
boundary points consist of the multi-scale stability conditions, described
heuristically as follows. Suppose $L=1$ for simplicity. We write $\epsilon_0,\epsilon_1,\epsilon_{01}$ for $J=\{0\},\{1\},\{0,1\}$ respectively. Then
\begin{itemize}
\item we may plumb by~$\tau$ with large negative imaginary part (so
that the lower level stability condition $(\calA_1,e^{-\pi i \tau} Z_1)$ stays small
in size) and wiggle the result in $\Stab^\circ(\calD^3_{A_n})$ by a small amount (by
a size controlled by~$\ve_{01}$);
\item alternatively we may not plumb (i.e.\ $\tau = -i\infty$) and wiggle
in~$\Stab^\circ(\calV_1)$ and $\Stab^\circ(\calD/\calV_1)$ a bit (by sizes
controlled by~$\ve_i$) on level $i$, for $i=0,1$.
\end{itemize} 
\par
We say that a stability condition $(\cA_\bullet',Z'_\bullet)$ with $L' \leq L$ levels
below zero arises \emph{by plumbing of size at most~$\bfdelta$ from $(\cA_\bullet,
Z_\bullet)$} if there is $\tau \in (-\bH_\infty)^L$) with
$|e^{-\pi i \tau_j}| < \delta_j $ for $j=1,\ldots,L$ and $(\calA'_\bullet,Z'_\bullet) =
\tau \ast (\calA_\bullet,Z_\bullet)$. For a such a stability condition we denote
the new vanishing categories by~$\calV'_i$. Say levels in the interval~$J_i =
\{j_1,\ldots\} \subset \{0,\ldots,L\}$ have been plumbed to form the new level~$i$.
(This implies by definition that $\tau_{j_1} = -i\infty$.)
\par
We define the \emph{natural inner product} on $K(\calV'_i/\calV'_{i+1})^\vee =
\Hom(K(\calV'_i/\calV'_{i+1}),\bC)$ by using as an orthonormal basis
the basis $Z_{S_i}$ dual to the simples of $\calA_0$. (Note that this norm
depends on the heart~$\calA_0$ and its simples, but the norm around any other
multi-scale stability condition~$\sigma^\dagger = (\cA_\bullet^\dagger,Z^\dagger_\bullet)$
is comparable, scaling by a factor~$C = C(\sigma,\sigma^\dagger)$ given
by the operator norm of the identity map with respect to the to norms.)
\par
\begin{definition} \label{def:bdneighborhood}
We define the set $V_{\bfeps,\bfdelta}(\cA_\bullet,Z_\bullet)$ to be the set of all
multi-scale stability conditions $(\cA_\bullet'',Z''_\bullet)$ with $L'$ levels
below zero such that 
\begin{itemize}[nosep]
\item[(1)] there is a multi-scale stability condition $(\cA_\bullet',Z'_\bullet)$
with $L' \leq L$ levels that arises by plumbing of size at most~$\bfdelta$
from $(\cA_\bullet,Z_\bullet)$, and
\item[(2)] the multi-scale stability condition $(\cA_\bullet'',Z''_\bullet)$
is in a neighborhood of $(\cA_\bullet',Z'_\bullet)$ in
$\prod\stab^\circ(\cl V'_i/\cl V'_{i+1})$ which maps to the
product of $\ve_J$-balls on $K(\calV'_i/\calV'_{i+1})^\vee$ under the forgetful
map retaining just the quotients of the multi-scale central charges.
Here $J$ is the interval that is plumbed to produce level~$i$.
\end{itemize}
A \emph{neighborhood} of $(\cA_\bullet,Z_\bullet,)$ is a set in $\MStab^\circ
(\calD_{A_n})$ that contains
$V_{\bfeps,\bfdelta}(\cA_\bullet,Z_\bullet,)$ for some~$\bfeps$ and~$\bfdelta$.
\end{definition}
\par
This definition includes the case that $L=0$ and that $(\cA, Z)$ is an
honest stability condition, in which case the neighborhoods have to
contain the $\ep$-balls  in the norm with orthonormal bases
given by the simples of~$\cA$, since the deformation of 
stability conditions is locally controlled by the deformation 
of the central charge. This gives the second part of the 
following lemma.
\par
\begin{lemma} The system of neighborhoods given in Definition~\ref{def:bdneighborhood}
defines a topology on $\MStab(\calD^3_{A_n})$ whose restriction to
$\Stab(\calD^3_{A_n})$ is the usual topology where the forgetful map retaining
the central charge is a local homeomorphism.
\end{lemma}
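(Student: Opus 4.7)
The plan is to verify the three neighborhood-basis axioms for the system $\{V_{\bfeps,\bfdelta}(\sigma)\}$ at each $\sigma = (\cA_\bullet,Z_\bullet)$, and then compare the restriction to the $L=0$ locus with the standard topology on $\Stab^\circ(\calD^3_{A_n})$. Two of the axioms are routine: each $V_{\bfeps,\bfdelta}(\sigma)$ contains $\sigma$ itself (take $\bftau = (-i\infty,\ldots,-i\infty)$ and zero deformation), and the intersection of two basic neighborhoods of $\sigma$ contains a basic neighborhood obtained by taking coordinate-wise minima of the two parameter tuples. Upward closure is built into Definition~\ref{def:bdneighborhood} by declaring a neighborhood to be any set containing some $V_{\bfeps,\bfdelta}$.

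The nontrivial axiom is the transitivity property: for every $\sigma'' \in V_{\bfeps,\bfdelta}(\sigma)$ I must exhibit $\bfeps', \bfdelta'$ (possibly depending on $\sigma''$) such that $V_{\bfeps',\bfdelta'}(\sigma'') \subset V_{\bfeps,\bfdelta}(\sigma)$. Unwinding Definition~\ref{def:bdneighborhood}, $\sigma''$ is an $\bfeps$-small deformation (in the sense of the product neighborhood on $\prod \stab^\circ(\cV'_i/\cV'_{i+1})$) of some $\sigma' = \bftau \ast \sigma$ with $|e^{-\pi i \tau_j}| < \delta_j$, and any $\sigma''' \in V_{\bfeps',\bfdelta'}(\sigma'')$ is an $\bfeps'$-small deformation of some $\bftau' \ast \sigma''$. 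At the combinatorial level I would first argue that the level-fusion pattern induced by $\bftau'$ on $\sigma''$ lifts to a refinement of the one induced by $\bftau$ on $\sigma$ (using that a level of $\sigma''$ is a union of levels of $\sigma$ that were already plumbed together), producing a composite tuple $\widetilde{\bftau} \in (-\bH_\infty)^L$ whose non-infinite entries have $|e^{-\pi i \widetilde{\tau}_j}| < \delta_j$ once $\bfdelta'$ is chosen small enough.

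The hard part, and where the estimates proved in Section~\ref{subsec:plumb} are essential, is the analytic realization of $\sigma'''$ as a bounded deformation of $\widetilde{\bftau} \ast \sigma$. The two operations ``plumb by~$\bftau'$, then deform'' and ``deform, then plumb by~$\bftau'$'' do not commute exactly, and similarly plumbing a deformation path of multi-scale conditions is only continuous up to tilt jumps. However, Corollary~\ref{cor:plumbassociative} quantifies the first discrepancy on each simple by $\ell \cdot |e^{-\pi i (\widetilde{\tau}_j+\lambda)}|$ times the sum of masses of lower-level simples, and Remark~\ref{rem:plumbpath} bounds the tilt jumps by the same type of expression. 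Applying these estimates inductively on the number of levels (reducing the general case to the $L=1$ statement one level at a time, as in the iterative construction of $\ast$), the total discrepancy between $\sigma'''$ and a direct plumbing-plus-wiggle of $\sigma$ is $O(\max_j \delta_j \cdot M)$ where $M$ bounds the lower-level masses appearing in $\sigma$. Choosing $\bfdelta'$ small enough to make this error dominated by $\bfeps$, and then $\bfeps'$ small enough to absorb the residual deformation, yields $\sigma''' \in V_{\bfeps,\bfdelta}(\sigma)$ as required.

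For the comparison with the usual topology, observe that when $\sigma \in \Stab^\circ(\calD^3_{A_n})$ we have $L=0$, so the only admissible tuple $\bftau$ is empty and $V_{\bfeps,\bfdelta}(\sigma)$ reduces to the set of stability conditions whose central charge lies in an $\varepsilon_\emptyset$-ball of $Z$ in the norm with orthonormal basis dual to the simples of $\cA$. Such balls form a neighborhood basis for the usual topology by the local homeomorphism property of the forgetful map $\Stab^\circ(\calD^3_{A_n}) \to \Hom(K(\calD^3_{A_n}),\bC)$ from \cite{BrStab}. This simultaneously identifies the two topologies on $\Stab^\circ(\calD^3_{A_n})$ and records that the forgetful map remains a local homeomorphism there.
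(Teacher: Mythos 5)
Your proposal takes essentially the same approach as the paper: identify the filter axiom (transitivity/$(N4)$) as the only non-trivial one, attack it via the commutation estimates of Proposition~\ref{prop:plumbassociative} and Corollary~\ref{cor:plumbassociative} together with the tilt-jump bound of Remark~\ref{rem:plumbpath}, reduce to the $L=1$ case, and observe that for $L=0$ the basic sets reduce to the usual $\varepsilon$-balls on central charges so that the forgetful map remains a local homeomorphism.

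The one point you leave implicit, and which is the crux of the paper's estimate, is control on the number of tilt jumps. The paper first shrinks $(\bfeps_0^*,\bfeps_1^*)$ until any point of the corresponding ball in $\Stab^\circ(\calD/\cl V_1)\times\Stab^\circ(\cl V_1)$ can be joined to $\sigma$ by a path involving \emph{at most one} tilt, so that Remark~\ref{rem:plumbpath} is applied exactly once; you invoke it for ``tilt jumps'' in the plural, ``inductively'', without saying why the number of jumps remains bounded. For $\calD^3_{A_n}$ this does hold, since the chamber structure is locally finite, but the argument should record that; otherwise the accumulated jump errors are a priori unbounded. You also aim at the slightly stronger statement that each $V_{\bfeps,\bfdelta}(\sigma)$ contains a basic neighborhood of \emph{every} one of its points (i.e.\ openness of the basic sets), whereas the paper proves only $(N4)$ by passing to a strictly smaller $V_{\bfeps^*,\delta^*}(\sigma)$ and absorbing the change-of-norm constant $C(\sigma,\sigma^\dagger)$; the paper's formulation is a bit more economical, since it sidesteps the question of what happens for $\sigma''$ near the boundary of $V_{\bfeps,\bfdelta}(\sigma)$. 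With the local-finiteness point made explicit, your proposal is correct and coincides in substance with the paper.
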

\par
\begin{proof}
The only axiom whose verification is non-trivial is the following. 
Let $U$ be a neighborhood of $\sigma=[A_\bullet,Z_\bullet]$, in the sense of 
Definition \ref{def:bdneighborhood}. Then there is a smaller neighborhood $V$ 
of this point, such that $U$ is a neighborhood of each 
$\sigma^\dagger=[\calA^\dagger_\bullet,Z^\dagger_\bullet]$ in $V$. We 
continue with the case $L = 1$, the general case works with the same argument. 
By definition $U$ contains some $V_{\bfeps,\delta}(A_\bullet,Z_\bullet)$. 
The rough idea is to take $V = V_{\bfeps^*,\delta^*}(A_\bullet,Z_\bullet)$ 
for some $(\bfeps^*,\delta^*)$ smaller  than $(\bfeps, \delta)$ in each entry,
so that $U$ contains  $V_{(\bfeps-\bfeps^*)/C, \delta-\delta^*}(\sigma^\dagger)$, just as
if we'd be working plainly in vector spaces, where~$C = C(\sigma,\sigma^\dagger)\geq 1$
accounts for the change of basis in the definition of the norms.
We will prove that  $V_{(\bfeps-\bfeps^*)/C, \delta-\delta^*}(\sigma^\dagger)$ is indeed
contained in $V_{\bfeps,\delta}(\sigma)$ for~$\bfeps^*$ carefully chosen. We have to
avoid that $\ve_0^*$, $\ve_1^*$  are large compared to $\ve_{01}^*$ so that any
plumbing after deforming $(\calA^\dagger_\bullet,Z^\dagger_\bullet)$ of the order
of $\ve_0$, $\ve_1$ doesn't fail of being near $\sigma$.
\par	
We may thus first take the pair $(\ve_0^*, \ve_1^*)$ so small that any point 
in the $(\ve_0^*, \ve_1^*)$-ball in $\Stab^\circ(\calD/\cl V_1) \times
\Stab^\circ(\cl V_1)$  can be reached from $\sigma$ by a path $\gamma(t)$ involving
at most one tilt, at $t=0$. Suppose the chosen $\sigma^\dagger=(A_\bullet^\dagger,
Z_\bullet^\dagger)\in V_{\bfeps^*,\delta^*}(\sigma)$ has also $L = 1$ levels below zero
and let $\gamma^\dagger(t)$ be the straight path connecting $\sigma$ and
$\sigma^\dagger$. Let $\tau$ in $-\H$ of magnitude at most $\delta^*$, and 
$\hat \sigma=\tau*\sigma^\dagger$, $\sigma'=\tau*\sigma$. By the argument 
in Proposition \ref{prop:plumbassociative} and the one-tilt hypothesis, 
the hearts $\hat \calA$ and $\calA'$ are intermediate with respect to $\calA_0$ 
so the distance between $\hat \sigma$ and $\sigma'$ is controlled by their 
central charges. If the plumbing of the path $\gamma^\dagger$ is continuous, 
then 
\be
||\wh{Z} - Z'|| \leq \ve_0^* + \delta_1^*\ve_1^*\,.
\ee
In the general case of a single tilt use Remark~\ref{rem:plumbpath} and compare
$\sigma' = \sigma^-$ with $\sigma^+ := (\calA^+,Z^+) = \lim_{t \to 0^+}
\tau *\gamma^\dagger(t)$. Now~\eqref{eq:Zdiffestimate2} and the previous estimate
in the new notation give the rough estimate
\be
||\wh{Z} - Z^+|| \leq \ve_0^* + \delta_1^*\ve_1^*\,\quad \text{and}
\quad\,||Z^+ - Z'|| \leq \ell n \delta_1^*.
\ee
The triangle inequality shows that requiring moreover $\ve_{01}^* \leq
\ve_0^* + \delta_1^*(\ve_1^* + \ell n)$ does the job.
\end{proof}	
\par
The next lemma will be used in the proofs at the end of this section. We consider
a sequence $\{\sigma_j\}_j$ of multi-scale stability conditions in
$\MStab^\circ(\calD_{A_n}^3)$.
\par
\begin{lemma} \label{le:convafterturn}
Fix $\lambda \in \bC$ with $0<\Re(\lambda)<1$ and consider the $\bC$-action given in
Proposition~\ref{prop:Caction}. Then a sequence~$\{\sigma_j\}_j$ converges to $\sigma$
in $\MStab^\circ(\calD^3_{A_n})$
if and only if $\{\lambda \cdot \sigma_j\}_j$ converges to $\lambda \cdot \sigma$. 
\end{lemma}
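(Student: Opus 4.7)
The plan is to prove continuity of the $\bC$-action in one direction; the reverse follows by applying the forward direction with $1-\lambda$ (which still lies in the strip $0<\Re(1-\lambda)<1$) to the sequence $\{\lambda\sigma_j\}$, yielding $[1]\sigma_j\to [1]\sigma$, and noting that $[1]$ is manifestly a homeomorphism of $\MStab^\circ(\calD^3_{A_n})$: it leaves the filtration $\cl V_\bullet$ and the plumbing parameters untouched and simply negates each $Z_i$.

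For the forward direction, fix $\sigma=[\calA_\bullet,Z_\bullet]$ with $L$ levels below zero and a basic neighborhood $V_{\bfeps,\bfdelta}(\lambda\sigma)$. The idea is to construct $\bfeps',\bfdelta'$ small enough that $\lambda\cdot V_{\bfeps',\bfdelta'}(\sigma)\subset V_{\bfeps,\bfdelta}(\lambda\sigma)$. Given $\sigma''\in V_{\bfeps',\bfdelta'}(\sigma)$, Definition~\ref{def:bdneighborhood} provides $\bftau\in(-\bH_\infty)^L$ with $|e^{-\pi i\tau_k}|<\delta'_k$ and an $\bfeps'$-small deformation $\sigma''$ of $\sigma':=\bftau\ast\sigma$ in the product of stability manifolds. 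The natural candidate witnessing $\lambda\sigma''\in V_{\bfeps,\bfdelta}(\lambda\sigma)$ is the multi-scale stability condition $\bftau\ast(\lambda\sigma)$.

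Two estimates then close the argument. First, since the standard $\bC$-action on each quotient $\Stab^\circ(\calV'_i/\calV'_{i+1})$ is continuous \cite{BrStab}, the $\bfeps'$-deformation from $\sigma'$ to $\sigma''$ produces an $O(\bfeps')$-deformation from $\lambda\sigma'=\lambda\cdot(\bftau\ast\sigma)$ to $\lambda\sigma''$; here I use Proposition~\ref{prop:Caction}(iii) so that $\lambda$ descends to the usual action on each level, both plumbed and unplumbed. Second, iterating Corollary~\ref{cor:plumbassociative} level by level shows that $\lambda\cdot(\bftau\ast\sigma)$ and $\bftau\ast(\lambda\sigma)$ have intermediate hearts with respect to a common heart, and central charges differing at every plumbed level $k$ by at most
\[
\ell\cdot |e^{-\pi i(\lambda+\tau_k)}| \sum_{S\in\Sim(\calA_k)} |Z_k(S)| \,\leq\, C_k\,\delta'_k.
\]
Combining the two via the triangle inequality and choosing $\bfdelta'\leq\bfdelta$, $\bfeps'$ so that $\bfeps'+C\|\bfdelta'\|<\bfeps$ componentwise, we place $\lambda\sigma''$ inside $V_{\bfeps,\bfdelta}(\lambda\sigma)$.

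The main obstacle is the iteration of Corollary~\ref{cor:plumbassociative} over $L$ levels, since the corollary is stated for $L=1$. Each application merges two adjacent levels of the $\bftau$-plumbing and produces a block lower-triangular change-of-basis on the top Grothendieck group; successive applications compose these, and the plan is to use finiteness of the heart $\calA_0$ (guaranteed by $\calD=\calD^3_{A_n}$) to bound the off-diagonal blocks uniformly, so the constants $C_k$ remain under control. A secondary bookkeeping point is to ensure the real-part hypothesis of Proposition~\ref{prop:plumbassociative} is met for each summand $\lambda+\tau_k$; this is handled as in Corollary~\ref{cor:plumbassociative}, by splitting $\tau_k$ into real and imaginary parts and absorbing the real part into a preliminary change of representative before invoking the commutation estimate.
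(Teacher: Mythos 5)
Your proof is correct and takes essentially the same route as the paper: the key step in both is to invoke Corollary~\ref{cor:plumbassociative} to control the discrepancy between $\lambda\cdot(\bftau\ast\sigma)$ and $\bftau\ast(\lambda\sigma)$, and then to combine this with the continuity of the ordinary $\bC$-action and the triangle inequality. The paper phrases the argument via sequential convergence rather than inclusion of basic neighborhoods and leaves the reverse implication and the multi-level iteration to the reader; your explicit reduction of the reverse direction to $1-\lambda$ followed by the shift $[1]$, and your remark that the blocks stay bounded because hearts of $\calD^3_{A_n}$ are finite, are reasonable elaborations of what the paper takes for granted.
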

\par
\begin{proof} Again we give the details in the case that $\sigma$ has $L=1$
levels below zero. Extracting subsequences we may assume that all $\sigma_j$
have the same number of levels below zero. If they are strict multi-scale
stability conditions, the claim follows from the corresponding statement in
$\Stab^\circ(\calV_1) \times \Stab^\circ(\calD^3_{A_n}/\calV_1)$. The interesting case
is that $\sigma_j \in \Stab^\circ(D^3_{A_n})$ for all~$j$.
\par
Convergence and the definition of neighborhoods implies that $\sigma_j$ is in
a open set $V_{\ep_j,\delta_j}(\sigma'_j)$, where $\sigma'_j = \tau_j * \sigma$
for some fixed representative of~$\sigma$ and for both $\ep_j \to 0$ and
$\delta_j = |e^{-\pi i \tau_j}| \to 0$ as $j \to \infty$.
We apply the action of~$\lambda$ to this sequence
and use Corollary~\ref{cor:plumbassociative} to see that
$\lambda \cdot (\tau_j * \sigma)$ is close to $\tau_j *  (\lambda \cdot \sigma)$
in a way controlled by~\eqref{eq:Zdiffestimate1}. We conclude that 
$\lambda \cdot \sigma_j$ is in an $\ep'_j$-ball of $\lambda\cdot(\tau_j
* \sigma)$ for $\ep'_j = \ep_j + \ell \delta_j$, which certifies convergence.
\end{proof}
\par
\medskip
\paragraph{\textbf{The Hausdorff property}} We now start proving that
$\MStab^\circ(D^3_{A_n})$ is a nice topological space. 
\par
\begin{lemma}\label{second_count} The space $\MStab^\circ(D^3_{A_n})$ is second
  countable.
\end{lemma}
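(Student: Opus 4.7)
The plan is to stratify $\MStab^\circ(\calD_{A_n}^3)$ according to the chain $\calV_\bullet$ of nested triangulated subcategories, exhibit a countable subset that is ``dense enough'', and then verify that the neighborhoods $V_{\bfeps,\bfdelta}$ with rational parameters around the points of this subset form a countable base for the topology introduced in Definition~\ref{def:bdneighborhood}.

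First I would argue that only countably many chains $\calV_\bullet$ arise from reachable multi-scale stability conditions. By Lemma~\ref{lem:possibleV} each $\calV_i$ is recorded by a subset of the finitely many simples of $\calA_{i-1}$, while the top-level heart $\calA_0$ ranges over the vertex set of $\EGp(\calD_{A_n}^3)$, which is countable since each finite heart has finitely many simple tilts and any reachable heart is obtained by a finite sequence of such tilts from the standard heart. Second, for each fixed chain $\calV_\bullet$ of length $L$, the passage to quotient hearts and quotient central charges gives, via Corollary~\ref{cor:quotprinc}, an embedding of $\MStab^\circ(\calD_{A_n}^3,\calV_\bullet)$ into
\[
\Stab^\bullet(\calV_0/\calV_1)\,\times\,\prod_{i=1}^{L}\bP\Stab^\bullet(\calV_i/\calV_{i+1}),\qquad \calV_{L+1}=0,
\]
each factor of which is a finite-dimensional complex manifold or complex orbifold, hence second countable. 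A countable dense subset in each factor pulls back to a countable collection $D_{\calV_\bullet}$ of representatives, and $D:=\bigcup_{\calV_\bullet}D_{\calV_\bullet}$ is a countable subset of $\MStab^\circ(\calD_{A_n}^3)$.

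Third I would let $\calB$ be the collection of all $V_{\bfeps,\bfdelta}(\sigma)$ with $\sigma\in D$ and $\bfeps,\bfdelta$ with positive rational entries; this is countable by construction, and I claim it is a base. Given any open $U$ and any $\tau\in U$ with filtration $\calV_\bullet(\tau)$, pick $(\bfeps_0,\bfdelta_0)$ with $V_{\bfeps_0,\bfdelta_0}(\tau)\subset U$, then choose $\sigma\in D_{\calV_\bullet(\tau)}$ close to $\tau$ in the product of quotient stability spaces together with small rational $(\bfeps,\bfdelta)$. Since $\sigma$ and $\tau$ share the same filtration, no nontrivial plumbing is needed to pass from $\sigma$ to $\tau$, so $\tau\in V_{\bfeps,\bfdelta}(\sigma)$ as soon as $\bfeps$ exceeds the product-distance from $\sigma$ to $\tau$; it remains to arrange $V_{\bfeps,\bfdelta}(\sigma)\subset V_{\bfeps_0,\bfdelta_0}(\tau)$ by shrinking parameters and the distance between $\sigma$ and $\tau$.

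The main technical obstacle is this last inclusion: every plumbing-then-deformation of $\sigma$ must be rewritten as a plumbing-then-deformation of $\tau$ within the allowed tolerance $(\bfeps_0,\bfdelta_0)$. This is a change-of-center argument, analogous in spirit to the one already used for the Hausdorff property, whose essential quantitative ingredient is the near-commutativity estimate~\eqref{eq:Zdiffestimate1} of Corollary~\ref{cor:plumbassociative}. That estimate bounds the discrepancy between plumbing-then-deforming and deforming-then-plumbing uniformly by $|e^{-\pi i\tau}|$ times the total mass of the lower-level simples, and the comparison constant $C(\sigma,\tau)$ between the natural inner products at $\sigma$ and at $\tau$ tends to $1$ as $\sigma\to\tau$, so the inclusion holds for sufficiently close choices.
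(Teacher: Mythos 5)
Your proposal is correct and takes essentially the same route as the paper's: the paper's one-sentence proof also takes as countable base the sets $V_{\bfeps,\bfdelta}(\calA_\bullet,Z_\bullet)$ with rational (projectivized) central charge values on the simples and rational $(\bfeps,\bfdelta)$, and asserts this "obviously" generates the topology. You fill in what the paper leaves implicit — the countability of the possible chains $\calV_\bullet$ and hearts $\calA_\bullet$, and the change-of-center inclusion $V_{\bfeps,\bfdelta}(\sigma)\subset V_{\bfeps_0,\bfdelta_0}(\tau)$ via the near-commutativity estimate~\eqref{eq:Zdiffestimate1} — which is a reasonable elaboration rather than a different approach.
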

\par
\begin{proof} The basis of neighborhoods consisting of $V_{\bfeps,\bfdelta}(
\calA_\bullet,Z_\bullet)$ with $(\calA_\bullet,Z_\bullet)$
such that the~$Z_i$ map the collection of simples to a (projectivized) tuple of
rational numbers, and with all entries of $(\bfeps,\bfdelta)$ being rational
obviously generates the same topology as the one using real numbers.
\end{proof}
\par
\begin{theorem} \label{thm:Hausdorff}
For any subgroup $G \subset \Aut^\circ(\calD^3_{A_n})$ the quotient 
$\MStab^\circ(\calD^3_{A_n})/G$ and the projectivized version $\PMStab^\circ(
\calD^3_{A_n})/G$ are Hausdorff topological spaces.
\end{theorem}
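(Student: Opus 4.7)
The plan is to first prove that $\MStab^\circ(\calD^3_{A_n})$ itself is Hausdorff, and then derive both quotient statements. For the $G$-quotient, the group $G$ acts by homeomorphisms permuting the strata indexed by vanishing filtrations, and on each stratum the induced action is properly discontinuous by Lemma~\ref{le:propdisc}; the projectivization by $\bC$ is then Hausdorff because the $\bC$-action of Proposition~\ref{prop:Caction} is continuous and free with closed orbits on each stratum.

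To separate two inequivalent classes $\sigma = [\calA_\bullet,Z_\bullet]$ and $\sigma' = [\calA'_\bullet,Z'_\bullet]$ with vanishing filtrations $\calV_\bullet$ and $\calV'_\bullet$ of lengths $L$ and $L'$, the key structural observation is that every point in a basic neighborhood $V_{\bfeps,\bfdelta}(\sigma)$ has a vanishing filtration that is a \emph{coarsening} of $\calV_\bullet$, obtained by merging those consecutive levels $j$ for which the plumbing parameter satisfies $\tau_j \neq -i\infty$. Consequently the set of filtrations attained by points of $V_{\bfeps,\bfdelta}(\sigma)$ is finite of size at most $2^L$. If no coarsening of $\calV_\bullet$ coincides with a coarsening of $\calV'_\bullet$, then $V_{\bfeps,\bfdelta}(\sigma)$ and $V_{\bfeps',\bfdelta'}(\sigma')$ are automatically disjoint. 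In the remaining case $\calV_\bullet$ and $\calV'_\bullet$ share a common coarsening $\mathcal{W}_\bullet$, and one must argue in this stratum: by iterated application of Proposition~\ref{prop:stabplumbing}, the points of $V_{\bfeps,\bfdelta}(\sigma)$ lying in stratum $\mathcal{W}_\bullet$ correspond to tuples of induced stability conditions in $\prod_i \Stab^\circ(\mathcal{W}_i/\mathcal{W}_{i+1})$ that are $\bfeps$-close to a tuple determined by $\sigma$ and the plumbing parameters, and likewise for $\sigma'$. Inequivalence of $\sigma$ and $\sigma'$ forces these two tuples to differ projectively at some level of $\mathcal{W}_\bullet$ for every choice of representatives and plumbing parameters, so the ordinary Hausdorff property in each factor $\Stab^\circ(\mathcal{W}_i/\mathcal{W}_{i+1})$ yields disjoint neighborhoods once all parameters are sufficiently small. (The purely honest case $L = L' = 0$ is the special instance where no plumbing occurs and one applies the Hausdorff property of $\Stab^\circ$ directly.)

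The main technical obstacle is this last case: one must control the position of plumbed stability conditions uniformly in the choice of representatives of $\sigma$ and $\sigma'$ and uniformly in the plumbing parameters with $|e^{-\pi i \tau_j}| < \delta_j$. This is handled by combining the quantitative plumbing-commutativity estimate of Proposition~\ref{prop:plumbassociative}, which bounds the discrepancy between commuting the $\bC$-action and the plumbing in terms of $|e^{-\pi i(\lambda+\tau)}|$ times a sum of central-charge norms, with Lemma~\ref{le:convafterturn}, which guarantees that convergence is preserved under the $\bC$-action. Together they ensure that no choice of representative or plumbing parameter can spoil the lower bound on the distance between the two tuples of induced stability conditions on the common stratum $\mathcal{W}_\bullet$, completing the separation.
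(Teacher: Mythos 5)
Your plan to prove Hausdorffness of $\MStab^\circ(\calD^3_{A_n})$ directly via separating neighborhoods, then pass to the quotients, is a genuinely different route from the paper's, which works with the sequential characterization of Hausdorffness (using second countability, Lemma~\ref{second_count}) and shows uniqueness of limits directly for the $G$-quotient. Your structural observation that points of $V_{\bfeps,\bfdelta}(\sigma)$ have vanishing filtrations that are coarsenings of $\calV_\bullet$ is correct and a reasonable starting point. However, the implementation has two real gaps.

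First, the passage from ``$\MStab^\circ$ Hausdorff plus $G$ properly discontinuous on each stratum'' to ``$\MStab^\circ/G$ Hausdorff'' is not justified. Lemma~\ref{le:propdisc} gives proper discontinuity of the \emph{induced} action on $\Stab^\bullet(\calD/\calV)$, i.e.\ stratum by stratum, not of $G$ on the whole (not locally compact) space $\MStab^\circ$. Proper discontinuity stratum-by-stratum does not preclude two inequivalent points from being approached arbitrarily closely by $G$-translates of each other through nearby strata — and the substance of the paper's proof in the $L'=1$ case is precisely to rule this out: it shows the autoequivalences $\Phi_j$ eventually stabilize the common $\calV$, then controls the remaining freedom on the lower level via the non-finite stabilizer, correcting by central elements of the braid group. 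Your proposal cites Lemma~\ref{le:propdisc} but does not confront this; the phrase ``Inequivalence of $\sigma$ and $\sigma'$ forces these two tuples to differ projectively at some level for every choice of representatives and plumbing parameters'' is exactly the claim needing proof, and it fails once $G$ is nontrivial without the stabilizer analysis.

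Second, for the $\bC$-quotient you assert ``the $\bC$-action is continuous and free with closed orbits, hence the quotient is Hausdorff.'' This implication is false in general: a free continuous action with closed orbits need not have Hausdorff quotient (consider, e.g., $\bZ$ acting on $\bR^2 \setminus \{0\}$ by $n\cdot(x,y) = (2^n x, 2^{-n}y)$, which is free with closed, in fact discrete, orbits but whose quotient is non-Hausdorff). What one needs is properness of the orbit map, or again a direct sequential uniqueness-of-limits argument incorporating the $\bC$-rescaling. The paper handles $\PMStab^\circ/G$ at the very end of its proof by the same sequence framework, correcting the top level by a central element, rather than via a general dynamical criterion.

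Finally, even in the case $G=\{e\}$ your separating-neighborhoods argument in the ``common coarsening'' case must rule out that the plumbings of two inequivalent $\sigma, \sigma'$ on the same stratum can overlap for arbitrarily small plumbing parameters $\bfdelta$, which is where the quantitative estimates Proposition~\ref{prop:plumbassociative} and Lemma~\ref{le:convafterturn} you invoke actually do work; but as written the argument does not make this quantitative claim, and it also entirely omits the case of nested filtrations of different lengths (the paper's $L' < L$ case, dismissed via the mass ratio $M_{\min}/M_{\max}$), which is a separate case requiring a separate estimate. The overall structure of your argument is plausible, but as it stands there are unfilled gaps precisely at the points the paper works hardest.
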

\par
\begin{proof}
Since the relevant (quotient) spaces are second countable, being Hausdorff is
equivalent to uniqueness of limits, which we now show. Suppose that the sequence
$\sigma_j = [\cA_{\bullet,j},Z_{\bullet,j}]$ of multi-scale stability condition converges
to $\sigma = [\cA_\bullet,Z_\bullet]$ and that the sequence $\sigma_j' = \Phi_j
[\cA_{\bullet,j},Z_{\bullet,j}]$, with $\Phi_j \in G$, converges to $\sigma' =
[\cA_\bullet',Z'_\bullet]$ in $\MStab^\circ(\calD^3_{A_n})$.  We need to show that
$[\cA'_\bullet,Z'_\bullet] = \Phi [\cA_\bullet,Z_\bullet]$ for some $\Phi \in G$.
We restrict our argument to the cases that all the $\sigma_j$ are honest stability
conditions, that $[ \cA_\bullet,Z_\bullet]$ has $L=1$ level below zero and that
$[\cA'_\bullet,Z'_\bullet]$ has $L' \in \{0,1\}$, leaving the inductive arguments to
treat the general case to the reader. Note that the mass $M_{\max}(\sigma_j)$ of
the longest and the mass $M_{\min}(\sigma_j)$ of the shortest stable object in
$\sigma_j$ is a notion that is invariant under the action of $\Aut^\circ
(\calD^3_{A_n})$.
\par
The case $L'=0$ is absurd, since this implies that $M_{\min}(\sigma_j)/
M_{\max}(\sigma_j)$ is bounded below, while the convergence to $\sigma$ implies
that this ratio tends to zero.
\par
In general, for a sequence $\sigma_j$ converging to $\sigma$ with $L=1$
and for some cut-off parameter~$M>1$, we say that a simple~$S$ is \lq\lq short\rq\rq\
if its mass is less that $1/M$ times the largest mass of a simple, and
\lq\lq long\rq\rq\ otherwise.
\par
In the case $L'=1$ consider the set of short stable objects in the
sequences~$\sigma_j$ and~$\sigma'_j$ respectively. By definition of the topology,
these short stable objects eventually (as $C \to \infty$) generate the vanishing
subcategories~$\calV$ and~$\calV'$. Consequently, $\Phi_j \calV = \calV'$
for $j \geq N$ for some~$N$ large enough. Replacing $\Phi_j$ by
$\Phi_N^{-1} \circ\Phi_j$ we may suppose from now on that $\calV = \calV'$ and
$\Phi_j \in G \cap \Aut^\circ(\calD^3_{A_n},\calV)$ for all $j\geq N$. Using the
triangle
inequality and the definition of the metric, it is easy to show that also the
sequence $\Phi_j (\sigma)$ converges to $\sigma'$ in $\MStab^\circ(\calD^3_{A_n})$.
Since all these objects are now in fact in $\MStab^\circ(\calD^3_{A_n},\calV)$, this
implies that $\Phi_j (\calA_0,Z_0) \to (\calA_0',Z_0')$ in $\Stab^\circ(\calD_{A_n}^3/
\calV)$ and $\Phi_j [\calA_1,Z_1] \to [\calA_1',Z_1']$ as projectivized stability
conditions in $\PP\Stab^\circ(\calV)$. Since $\Aut^\circ(\calD^3_{A_n},\calV)$ acts on
$\Stab^\circ(\calD^3_{A_n}/\calV)$
via $\pzAut_{\lift}(\calD/\calV)$, by Lemma~\ref{le:propdisc} the image
$\mathrm{Im}(G) \subset \Aut^\circ(\calD^3_{A_n}/\calV)$ acts  properly discontinuously
on $\Stab^\circ(\calD^3_{A_n}/\calV)$.  By definition the stabilizer in a neighborhood
of $(\ol{\calA_0},\ol{Z_0})$ is finite, hence after passing to a sub-sequence, we may
assume (again for $j\geq N$, which we assume now throughout) that $\ol{\Phi}_j \equiv
\ol{\Phi}^{(N)} \in \mathrm{Im}(G) \subset \Aut^\circ(\calD^3_{A_n}/\calV)$ with
$\ol{\Phi}^{(N)} (\ol{\calA_0},\ol{Z_0}) = (\ol{\calA_0'},\ol{Z_0'})$. 
\par
On lower level the stabilizer of $[\calA_1,Z_1]$ is not finite. Let~$H$ be the
stabilizer of the  un-projectivized $(\calA_1,Z_1)$. We recall that, by
Lemma~\ref{le:propdisc} the group~$H$ is a finite extension of $Z(B_{\rk(K(\calV))})$, 
which in turn acts trivially on the projectivized $[\calA_1,Z_1]$. This implies that,
at the cost of passing to a sub-sequence, there are  $z_j\in Z(B_{\rk(K(\calV))})$ such
that $\ol\Phi_j\big(z_j[\calA_1,Z_1]\big)$ is in fact the same converging sequence, 
but we can now work with their representatives in $\Stab^\circ(\calD^3_{A_n}/\calV)$
that have finite stabilizer. We can extract a convergent subsequence with
${\Phi_j}_{|\calV}(\calA_1,Z_1) \equiv (\calA_1',Z_1')$. Taken together, this means
that there is $N$ large enough so that $\Phi_{N}\sigma_\bullet=\sigma'_\bullet$. 
\par
The case $\PMStab^\circ(\calD^3_{A_n})$ is similar: one then has to correct also the
top level by a central element to ensure convergence to some~$\ol{\Phi^{(0)}}$.
\end{proof}
\par

%%%%%%%%%%%%%%%%%%
\subsection{The complex structure on quotients of boundary neighborhoods in
$\MStab^\circ(\calD^3_{A_n})$} \label{subsec:complexstr}
%%%%%%%%%%%%%%%%%%%%

Next we upgrade from a topology to a structure of complex orbifold. This will
not be possible on $\MStab^\circ(\calD^3_{A_n})$, but only the quotient by
the group of autoequivalences, see Section~\ref{sec:A2revisited}
for an illustration
in the case of the $A_2$-quiver. The first step in this direction is to
exhibit a subgroup $\Tw^s(\calV_\bullet)$ in the stabilizer of the boundary such
that the quotient $\MStab^\circ(\calD,\cl V_\bullet)/\Tw^s(\calV_\bullet)$
is a complex manifold. We then determine the full stabilizer 
of these boundary neighborhoods and exhibit the orbifold structure.
\par
We fix once for all a multi-scale stability condition $\sigma_\bullet=[\calA_\bullet,Z_\bullet]$ with $L$ levels below zero, and let $\calV_\bullet$ be the associated
sequence of nested vanishing subcategories of $\calD=\calD^3_{A_n}$.
\par
\medskip
\paragraph{\textbf{The simple twist group  $\Tw^s(\calV_\bullet)$}} 
Recall the numerical data associated with a multi-scaled stability conditions of
type $A_n$ from Section~\ref{sec:numdata} and suppose $\sigma_\bullet$ is of
type $\bfrho$. We focus at a level~$i$ and let $\calV_i^{(j)}$ be the components
of $\calV_i$. Suppose that $\calV_i^{(j)}$ has type~$n_{i}^{(j)}$, i.e.\ the heart
$\calV_i^{(j)}\cap\calA_i$ has $n_{i}^{(j)}$ simples $S_1,\ldots,S_{n_{i}^{(j)}}$
in the subset~$\calS = \calS(i,j)$ of $\Sim(\calA_0)$.
We recall the definition in Section~\ref{sec:STandBraid} of the group
elements~$\theta_{I,n}$. 
\par 
For $I$ denoting the closed arcs in the subsurface $\Sigma_i^{(j)}$ we let 
\be
\frakc_{i,j} = \begin{cases}
	\theta_{I,n} & \text{if $n_i^{(j)}$ is odd}\\
	\theta_{I,n}^2  & \text{if $n_i^{(j)}$ is even.}
\end{cases}
\ee
\par
From $\bfrho$ we derive another collection of integers $(\ell_i)_{i=1}^L$.
Recall that we define $\kappa_i^{(j)} = n_i^{(j)} + 3$ to be the number of marked
points on the boundary of $\Sigma_i^{(j)}$. We let
\be
\wh\kappa_i^{(j)} \= \begin{cases}
(n_i^{(j)} + 3)/2 & \text{if $n_i^{(j)}$ is odd} \\
(n_i^{(j)} + 3) & \text{if $n_i^{(j)}$ is even} \\
\end{cases}
\ee
(This notation  is consistent with the enhancements in
Section~\ref{sec:BCGGMforAn}.)
We define \be \ell_i \= \lcm\{ \wh\kappa_i^{(j)}\,,\, j=1,\ldots, s_i \}\,. \ee
For each level~$i$ and each~$j$ we now define the elements
	\be \label{eq:defstdtwistgens}
	\frakc_i := \prod_j  \frakc_{i,j}^{\ell_i/\wh\kappa_{i}^{(j)}}
	:= \prod_j \theta_{I(i,j),n}^{\ell_i/\kappa_i^{(j)}}. 
\ee
and we define the \emph{simple twist group} to be
$\Tw^s(\calV_\bullet) :=\langle \frakc_1,\dots, \frakc_L\rangle$.
\par
\begin{prop} \label{prop:nbdhstab}
For each level~$i$ the element $\frakc_i\in  \PB_{n}$
preserves the neighborhoods $V_{\bfeps,\delta}(\calA_\bullet,Z_\bullet)$ for
all $(\bfeps,\delta)$ small enough.
\end{prop}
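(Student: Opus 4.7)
The plan is to show that (i) $\frakc_i$ fixes the reference multi-scale stability condition $\sigma_\bullet = [\calA_\bullet, Z_\bullet]$ as an element of $\MStab^\circ(\calD^3_{A_n})$, and (ii) the action of $\frakc_i$ on plumbings and deformations around $\sigma_\bullet$ preserves the combinatorial type of the neighborhood.

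For (i), I first note that each factor $\theta_{I(i,j),n}$ appearing in \eqref{eq:defstdtwistgens} is, via $\varphi_{I(i,j),n}$, a central element of the braid group $B_{Q_{I(i,j)}}\cong B_{n_i^{(j)}+1}$ attached to the component $\calV_i^{(j)}$. As a word in the spherical twists at the simples of that component, it preserves the triangulated subcategory $\calV_i^{(j)}$ and (since these simples generate it) also $\calV_{i+1}^{(j)}$. Hence $\frakc_i$ preserves the entire filtration $\calV_\bullet$ and descends to an autoequivalence of each quotient $\calV_k/\calV_{k+1}$. By Lemma~\ref{le:propdisc} applied to the component $\calV_i^{(j)}/\calV_{i+1}^{(j)}$, the image of $\frakc_{i,j}$ is central and therefore fixes the projective stability condition $[\ol{\calA_i^{(j)}},\ol{Z_i^{(j)}}]$, acting on the unprojectivized condition by some $\bC$-translation $\lambda_{i,j}$ depending only on $n_i^{(j)}$. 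On quotients $\calV_k/\calV_{k+1}$ with $k>i$ the element $\frakc_i$ acts trivially (its supporting simples lie in $\calV_i^{(j)}$ but not in $\calV_{i+1}^{(j)}$, so they vanish in the lower quotient); on the top level $k<i$ its image factors through shifts on the components distinct from $\calV_i^{(j)}$.

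The combinatorial choice of exponents in \eqref{eq:defstdtwistgens} is designed precisely to equalize these level-$i$ scalars across the components $j$. Concretely, using \eqref{eq:Phiinduced} and the symmetry of an $A_{n_i^{(j)}}$-configuration under the generator of the $\bZ/\kappa_i^{(j)}$-rotation of the corresponding disk (cleanly visible on the quadratic-differential side via Section~\ref{sec:DMS}, where $\theta_{I(i,j),n}$ corresponds to a $2\pi$ twist of the framing around $\Sigma_i^{(j)}$), one computes that $\lambda_{i,j}$ is proportional to $2/\wh\kappa_i^{(j)}$; the parity-dependent halving in the definition of $\wh\kappa_i^{(j)}$ accounts for whether a full twist or a half twist suffices to bring the framing back to itself. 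Consequently $\frakc_{i,j}^{\ell_i/\wh\kappa_i^{(j)}}$ contributes the uniform scalar $\lambda_i$, independent of $j$, at level $i$. This uniform scalar is absorbed by the projective equivalence at level $i\geq 1$ in Definition~\ref{def:mstab}(ii), proving $\frakc_i\cdot\sigma_\bullet = \sigma_\bullet$.

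For (ii), any $\sigma''\in V_{\bfeps,\bfdelta}(\calA_\bullet, Z_\bullet)$ arises from a plumbing $\bftau$ of magnitude at most $\bfdelta$ followed by a surviving-level perturbation of magnitude at most $\bfeps$. Because $\frakc_i$ stabilizes $\calV_\bullet$ at the reference point (and hence, by continuity of the central-charge vanishing locus, at every nearby stability condition), the action of $\frakc_i$ takes plumbings to plumbings: more precisely, it sends $\bftau\ast\sigma_\bullet$ to $\bftau'\ast\sigma_\bullet'$, where $\sigma_\bullet'\sim\sigma_\bullet$ is another representative of the same equivalence class and $\bftau'$ differs from $\bftau$ only at the $i$-th coordinate by an \emph{integer} real shift (the logarithm of the level-$i$ scalar $\lambda_i$). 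Such a shift preserves $|e^{-\pi i\tau_i}|$, hence preserves $\bfdelta$. The effect on the $\bfeps$-perturbation is, via Corollary~\ref{cor:plumbassociative} and Remark~\ref{rem:plumbpath}, a rotation plus an error bounded by $\ell\cdot|e^{-\pi i\tau_j}|\cdot\sum|Z_1(S_i)| < C\cdot\delta_j$. Shrinking $(\bfeps,\bfdelta)$ so that this error fits strictly inside the $\bfeps$-tolerances of the neighborhood description (and the same for $\frakc_i^{-1}$) gives $\frakc_i\cdot V_{\bfeps,\bfdelta}(\calA_\bullet, Z_\bullet) = V_{\bfeps,\bfdelta}(\calA_\bullet, Z_\bullet)$.

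The main obstacle is pinning down the scalar $\lambda_{i,j}$ with enough precision to see that the composite scalar at level $i$ is genuinely $j$-independent and a \emph{real} shift in the plumbing parameter. This is clean on the quadratic-differential side, where the full twist around $\Sigma_i^{(j)}$ is geometrically a real parameter; categorically it requires an iterated computation using \eqref{eq:Phiinduced} together with the orthogonality of the components noted after Lemma~\ref{lem:possibleV}, and a case split on the parity of $n_i^{(j)}$ that justifies the halving in the definition of $\wh\kappa_i^{(j)}$.
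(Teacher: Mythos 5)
Your high-level plan (show that $\frakc_i$ fixes the base multi-scale stability condition, then track the effect on plumbings and perturbations) is in the spirit of the paper's proof. The paper's argument, reduced to $L=1$, $i=1$, asserts three things in \eqref{eq:cjaction}: $\frakc_{1,j}$ acts on the lower-level pair $(\calA_1\cap\calV^{(j)}, Z_1|_{K(\calV^{(j)})})$ precisely as the $\bC$-action by $\wh\kappa_1^{(j)}$, it fixes $Z_0$ on $K(\calD/\calV)$, and it fixes $\ol{\calA}_0$. From these it concludes that $\frakc_1=\prod_j\frakc_{1,j}^{\ell_1/\wh\kappa_1^{(j)}}$ acts as the shift by $\ell_1$ on all of $(\calA_1,Z_1)$, and since $\ell_1\in\bZ$ this preserves the modulus $|e^{-\pi i\tau}|$ of the plumbing parameter.

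The gap in your argument is the claimed scaling $\lambda_{i,j}\propto 2/\wh\kappa_i^{(j)}$. If this were the case, $\frakc_{i,j}^{\ell_i/\wh\kappa_i^{(j)}}$ would act by a scalar proportional to $\ell_i/\bigl(\wh\kappa_i^{(j)}\bigr)^2$, which is \emph{not} independent of $j$, and the compensation argument that the definition \eqref{eq:defstdtwistgens} is designed to achieve would fail. The correct statement — the first equality of \eqref{eq:cjaction} — is that $\frakc_{i,j}$ acts by $\wh\kappa_i^{(j)}$, i.e.\ proportional to $\wh\kappa_i^{(j)}$ and not its reciprocal, so that the exponent $\ell_i/\wh\kappa_i^{(j)}$ in \eqref{eq:defstdtwistgens} yields the $j$-independent scalar $\ell_i$. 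This is exactly where the paper invokes the result of Seidel–Thomas (\cite[Lemma~4.14]{seidelthomas}) applied to the subquiver $Q_{I(i,j)}$; you flag this computation as the \emph{main obstacle} and do not carry it out, so the crux of the proof is missing. Once you have that input, the remainder of your argument for (ii) is more elaborate than necessary: one need not appeal to Corollary~\ref{cor:plumbassociative} or Remark~\ref{rem:plumbpath}, since (exactly because the shift $\ell_i$ is a real integer) $\tau_i\mapsto\tau_i+\ell_i$ leaves $|e^{-\pi i\tau_i}|$ invariant and carries plumbings of size at most $\bfdelta$ to plumbings of size at most $\bfdelta$, with the perturbation radii unaffected. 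A minor further imprecision: $\frakc_{i,j}$ does not \emph{shift} the remaining components on higher levels — it acts trivially on the quotient $(\ol{\calA}_0,Z_0)$ (second and third equalities of \eqref{eq:cjaction}, proved via \eqref{eq:Phiinduced} and by expressing $\frakc_{1,j}$ as tilts at simples in $\calV$ using \eqref{PhiSA}).
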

\par
Implicit in the notation is that the elements $\frakc_{i,j}$ for fixed~$j$ commute.
In fact:
\begin{lemma}
The elements $\frakc_{i,j}$ for all~$(i,j)$ commute. In particular the \emph{simple
twist group} $\Tw^s(\calV_\bullet) = \langle \frakc_1,\dots, \frakc_L\rangle$ is
a free abelian group of rank~$L$.
\end{lemma}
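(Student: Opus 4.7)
The plan is to first establish pairwise commutativity of all the elements $\frakc_{i,j}$, which simultaneously ensures that each $\frakc_i$ is a well-defined product not depending on the ordering of the factors and that the group $\Tw^s(\calV_\bullet)$ is abelian. I will then establish freeness of rank~$L$ by passing to the abelianization of $\PB_{n+1}$ and exhibiting an upper-triangular pairing between the $\frakc_i$ and a carefully chosen set of pair coordinates.

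For commutativity the argument splits into two cases according to the nesting structure recorded in Lemma~\ref{lem:possibleV}. When the pairs $(i,j)$ and $(i,j')$ lie at the same level, the subsurfaces $\Sigma_i^{(j)}$ and $\Sigma_i^{(j')}$ are disjoint connected components of $\Sigma_i$. The element $\theta_{I(i,j),n}$ may be realized geometrically as a full Dehn twist around a small disc enclosing precisely the strands touched by the arcs in $I(i,j)$; since the two subsurfaces are disjoint we can take these discs disjoint, and twists around disjoint curves commute in the mapping class group, hence in $B_{n+1}$ via the isomorphism of Section~\ref{sec:STandBraid}. When $(i,j)$ and $(i',j')$ sit at different levels with $i<i'$, the strict inclusion $\calV_{i'}\subset\calV_i$ forces $\Sigma_{i'}^{(j')}$ to be either disjoint from every $\Sigma_i^{(\cdot)}$ (reducing to the previous case) or contained in a unique $\Sigma_i^{(j)}$. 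In the nested case, $I(i',j')\subset I(i,j)$ and both elements lie in $\varphi_{I(i,j),n}(B_{Q_{I(i,j)}})$, inside which $\theta_{I(i,j),n}$ is by construction the central element, giving the desired commutation.

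For the rank claim the plan is to work in the abelianization $\PB_{n+1}^{\mathrm{ab}}\cong\bZ^{\binom{n+1}{2}}$, freely generated by the classes of the standard pair generators $A_{kl}$. A classical fact about pure braid groups is that each full twist $\theta_{I,n}$ maps to a positive integer multiple of $\sum A_{kl}$, summed over pairs of strands in the disc enclosing the arcs of $I$. Consequently the $(k,l)$-coordinate of $\frakc_i$ is positive precisely when the strands $k,l$ both lie in the strand-support of a single component of $\calV_i$. Using the strict nesting $\calV_1\supsetneq\cdots\supsetneq\calV_L$, I will choose for each $i$ an arc in $I^{(i)}\setminus I^{(i+1)}$ together with its two adjacent strands $\{k_i,l_i\}$; this pair lies in the strand-support of a single component of $\calV_i$ but is not contained in any component of $\calV_{i'}$ for $i'>i$, because either $k_i$ or $l_i$ then fails to belong to $I^{(i')}$ or they lie in different components there. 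The resulting matrix of $(k_i,l_i)$-coordinates of $\frakc_1,\ldots,\frakc_L$ is upper triangular with nonzero diagonal, yielding $\bQ$-linear independence of their images in $\PB_{n+1}^{\mathrm{ab}}\otimes\bQ$ and hence freeness of rank~$L$.

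The main technical point to verify is the abelianization formula for $\theta_{I,n}$, which is classical pure-braid theory and amounts to computing the exponent sum of each standard PB generator inside a full twist. Once this is in hand, the combinatorial existence of the separating pair at each level follows directly from Lemma~\ref{lem:possibleV} together with the fact that each component $\Sigma_i^{(j)}$ of type~$A_{n_i^{(j)}}$ with $n_i^{(j)}\geq 1$ always contains at least one arc and its two strand-endpoints.
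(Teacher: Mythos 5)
Your commutativity argument is the paper's, up to phrasing: for same-level pairs you realize the twists geometrically around disjoint discs, while the paper argues via the absence of quiver edges between the two components together with the braid relations~\eqref{eq:braidrel2}, which is the same disjointness fact; for nested pairs you invoke centrality of $\theta_{I,n}$ in the image of the braid group of the larger subquiver, exactly as in the paper. The paper's proof in fact stops there and treats the rank-$L$ freeness as an immediate consequence, so your abelianization argument is a genuine and useful addition.

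That addition is sound in outline but the key step is asserted rather than proven. You claim that for an arc $a_i\in I^{(i)}\setminus I^{(i+1)}$ with strand-endpoints $k_i,l_i$, this pair cannot lie in the strand-support of a single component of $\calV_{i'}$ for any $i'>i$, and you present this as following ``directly'' from Lemma~\ref{lem:possibleV}. It does not: a priori $k_i$ and $l_i$ could be reconnected inside $\calV_{i'}$ by a chain of arcs other than $a_i$. What actually makes the step work is that the closed (dual) arcs connecting the $n+1$ decoration points of the disc form a \emph{tree} --- they are $n$ disjoint arcs embedded in a simply connected surface, joining $n+1$ vertices. Removing $a_i$ therefore disconnects $k_i$ from $l_i$ in this tree, and since $\calV_{i'}\subseteq\calV_i\setminus\{a_i\}$, no component of $\calV_{i'}$ can contain both endpoints. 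Once this topological input is made explicit, your coordinate matrix in $\PB_{n+1}^{\mathrm{ab}}$ is genuinely triangular with nonzero diagonal and the rank claim follows.
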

\par
\begin{proof} For fixed~$i$ the elements $\frakc_{i,j}$ and $\frakc_{i,j'}$ commute
by~\eqref{eq:braidrel2} since they correspond to disjoint subsurfaces and hence
any two vertices, one in $I(i,j)$ and one in $I(i,j')$, cannot be connected
by an edge in the corresponding quiver. For different level indices $i < i'$, the
elements $\frakc_{i,j}$ and $\frakc_{i',j'}$ commute for the same reason if
$Q_{I(i',j')}$ is not a subquiver of  $Q_{I(i,j)}$. If it is a subquiver, the
elements commute since $\theta_{I,n}$ is (the image of) the central element in
the braid group corresponding to  $Q_{I(i,j)}$.
\end{proof}
\par
\begin{proof}[Proof of Proposition~\ref{prop:nbdhstab}]
Focusing on the subcategories above and below~$i$ we may reduce to the case
that $(\calA_\bullet,Z_\bullet)$ has $L=1$ and we consider $i=1$, thus writing
$\calV = \calV_1$. Suppose the connected components $\calV^{(j)}$ have type~$n_j$
and correspond to the subsurfaces $\Sigma^{(j)}$. We claim that 
\ba \label{eq:cjaction}
\frakc_{1,j} (\calA_1 \cap \calV^{(j)}Z_1|_{K(\calV^{(j)})})
&\= \wh\kappa_{1}^{(j)} \cdot (\calA_1 \cap \calV^{(j)},Z_1|_{K(\calV^{(j)})}), \\
\frakc_{1,j} Z_0 &\= Z_0 \in \Hom(K(\calD^3_{A_n}/\calV),\bC), \\
\frakc_{1,j} \ol{\calA}_0 &\= \ol{\calA}_0 \in \calD^3_{A_n}/\calV\,.
\ea
Granting the claim, we conclude that $\frakc_{1}$ acts like the shift by~$\ell_1$
on $\calA_1 \cap \calV^{(j)}$ for every~$j$, thus on the whole $(\calA_1,Z_1)$.
Since adding $\wh\kappa_{1}^{(j)}$
to~$\tau$ does not change the norm used in~(1) of the topology definition,
the elements stabilize the neighborhoods as claimed. Since $\tau+\wh\kappa_{1}^{(j)}$
realizes a plumbing of size at most~$\delta$, if $\tau$ does so, the elements
stabilize the neighborhoods (as defined by (1)-(2) in
Definition~\ref{def:bdneighborhood}) as claimed.
\par
The first equality of~\eqref{eq:cjaction} is \cite[Lemma~4.14]{seidelthomas} applied
to the subquiver $Q_{I(1,j)}$. The second equality holds by definition of
the induced actions on Grothen\-dieck groups, see~\eqref{eq:Phiinduced}.
For the third equality we write each spherical twist appearing in the
definition of $\frakc_{1,j}$ as a composition of tilts at simples in~$\calV$
using~\eqref{PhiSA}. The claim then follows since such tilts do not change
the quotient heart, see \cite[Lemma~5.6]{BMQS}.
\end{proof}
\par
\medskip
\paragraph{\textbf{The complex structure on boundary neighborhoods}}
Consider a neighborhood $V_{\bfeps,\delta}(\calA_\bullet,Z_\bullet)$ as given in
Definition~\ref{def:bdneighborhood}. As we stated after~\eqref{eq:cjaction}
the element $\frakc_i$ acts as the shift by~$\ell_i$ on the $i$-th level
of $(\calA_\bullet,Z_\bullet)$. Consequently, plumbing by $\bftau$
and plumbing by $\bftau + \ell_i e_i$ (where $e_i$ is the $i$-th unit vector)
give the same stability condition in the quotient $V_{\bfeps,\delta}(
\calA_\bullet,Z_\bullet)/
\Tw^s(\calV_\bullet)$. Therefore, on this quotient space the parameters
\begin{itemize}
\item $t_i = \exp\{2 \pi \sqrt{-1} \frac{\tau_i}{\ell_i}\}$ for $i=1,\ldots, L$,
\item the ratios of central charges of simples on $\PP\Stab^\circ(\calV_j/{\calV_{j+1}})$
for $j >0$,
\item the central charges of the simples on $\Stab^\circ(\calD^3_{A_n}/{\calV_{1}})$,
\end{itemize}
all together give a complex chart.
\par
Instead of using ratios of central charges we may equivalently fix
a representative $(\calA_\bullet,Z_\bullet)$ of the multi-scale stability condition
such that a (\lq\lq pivot\rq\rq) simple~$S_i$ in each $\calA_i$ for $i>0$ has $Z(S_i) = 1$
and use central charges of the remaining (non-pivot) simples in
$\Sim(\calA_i) \setminus \Sim(\calA_{i+1})$ together with the $t_i$ as coordinates.
\par\medskip
Next we check compatibility, i.e., we compare two charts defined around points
in $V_{\bfeps,\delta}(\calA_\bullet,Z_\bullet)/\Tw^s(\calV_\bullet)$ and $V'_{\bfeps',\delta'}
(\calA'_\bullet,Z'_\bullet)/\Tw^s(\calV'_\bullet)$ with $L\neq L'$, in the case of
non-trivial intersection of the neighborhoods. In particular we check compatibility
with the existing complex structure on $\Stab^\circ(\calD^3_{A_n})$. Fix
$\sigma=(\calA_\bullet,Z_\bullet)$ with $L=1$ and consider a point $\sigma' =
(\calA'_\bullet,Z'_\bullet) \in  V_{\bfeps,\delta}(\calA_\bullet,Z_\bullet)$ and suppose
first that $L=1$ and $\sigma' = (\calA,Z)$  is actually an honest stability condition.
We now use the definition of the plumbed central charge according to
Proposition~\ref{prop:stabplumbing} and the \lq\lq pivot\rq\rq\ viewpoint for coordinates.
We find that the central charge $Z_0'(S)$ of a simple $S \in \Sim(\calA)$ is equal to $Z_0(S)$
if $S \not\in \Sim(\calA_1)$ or equal to $t_1 Z_1(S)$ if $S \in \Sim(\calA_i)$. 
Since $t_1 \neq 0$ near~$\sigma'$, this coordinate change is a biholomorphism.
Similarly, the compatibility holds if $L>1$ and if the neighboring point~$\sigma'$
has $L'>0$ below zero, the coordinate change map being given by a product of
$t_j$'s times $Z_i(S)$ for  $S \in \Sim(\calA_i) \setminus \Sim(\calA_{i+1})$.
We summarize this discussion:
\par
\begin{prop} \label{prop:complexcharts}
The quotients of the boundary neighborhoods $V_{\bfeps,\delta}(\calA_\bullet,Z_\bullet)$
by the simple twist group $\Tw^s(\calV_\bullet)$ admit a complex structure compatible
with the complex structure around any point $(\calA'_\bullet,Z'_\bullet) \in 
V_{\bfeps,\delta}(\calA_\bullet,Z_\bullet)$ with less that~$L$ levels below zero.
\end{prop}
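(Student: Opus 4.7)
The plan is to carry out in three steps the verification sketched in the authors' discussion just before the proposition: show that the proposed coordinates descend to the $\Tw^s(\calV_\bullet)$-quotient, that they furnish a local homeomorphism onto an open subset of affine complex space of the correct dimension, and that they glue holomorphically to the charts already present at points with fewer levels below zero.

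First, I would establish $\Tw^s(\calV_\bullet)$-invariance by unwinding~\eqref{eq:cjaction}. The generator $\frakc_i$ fixes the quotient heart and induced central charge on each level $\neq i$, and acts as the shift $[\ell_i]$ on the stability condition induced on level~$i$. Under the plumbing parametrisation of a representative, this shift corresponds precisely to the translation $\tau_i \mapsto \tau_i + \ell_i$. Consequently the coordinate $t_i = \exp(2\pi\sqrt{-1}\tau_i/\ell_i)$ is invariant under $\frakc_i$, while the ratios of central charges on $\PP\Stab^\circ(\calV_j/\calV_{j+1})$ for $j \geq 1$ and the central charges on $\Stab^\circ(\calD^3_{A_n}/\calV_1)$ are untouched by $\frakc_i$ because it fixes those levels entirely.

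Second, for bijectivity onto an open subset of $\bC^N$, I would fix a pivot simple $S_\ast^{(i)} \in \Sim(\calA_i)\setminus\Sim(\calA_{i+1})$ at each level $i \geq 1$ and normalize $Z_i(S_\ast^{(i)}) = 1$, which removes the $\bC$-ambiguity in the projective representative at that level. The remaining non-pivot central charges are then genuine coordinates by Bridgeland's local homeomorphism theorem applied to each quotient category $\calV_i/\calV_{i+1}$, and similarly the central charges on the top-level quotient locally parametrize $\Stab^\circ(\calD^3_{A_n}/\calV_1)$. The coordinates $t_i$ take values in a disc around~$0$ as $\tau_i$ ranges over $-\bH_\infty$, and the $\Tw^s(\calV_\bullet)$-action is precisely the period lattice of the exponential map used to define them. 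Together these give a local homeomorphism onto a product of an open polydisc with open subsets of central-charge spaces, with total complex dimension equal to $\rk K(\calD^3_{A_n})$.

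Third, I would verify chart compatibility by iterating the direct-sum formula for the plumbed central charge from Proposition~\ref{prop:stabplumbing}. If $\sigma' \in V_{\bfeps,\bfdelta}(\calA_\bullet,Z_\bullet)$ has $L' < L$ levels, realised as $\bftau \ast (\calA_\bullet,Z_\bullet)$ with some entries of $\bftau$ finite, then the central charge of a simple $S \in \Sim(\calA'_i)\setminus\Sim(\calA'_{i+1})$ equals $Z_j(S)$ (for a unique original level $j$) times a product of rescaling factors $e^{-\pi\sqrt{-1}\tau_k}$ indexed by the plumbed levels that collapsed into level~$i$. Each such factor is a holomorphic non-vanishing function of the coordinates $t_k$ on the overlap, so the transition map is given by multiplications by holomorphic units applied to non-affected central-charge coordinates, hence a biholomorphism. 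The most delicate point, which I expect to be the only real obstacle, is matching the pivot normalisations on the two sides: this is resolved by the free $\bC$-action at each level, which rescales non-pivot central charges linearly when the pivot is re-chosen, and the resulting change is absorbed into the coordinate transformation without destroying holomorphicity.
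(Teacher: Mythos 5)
Your proposal is correct and follows essentially the same route as the paper's discussion immediately preceding the proposition: establish $\Tw^s(\calV_\bullet)$-invariance of the coordinates $t_i$ via the fact that $\frakc_i$ acts as the shift $[\ell_i]$ on level~$i$ (hence as $\tau_i\mapsto\tau_i+\ell_i$ under plumbing), use the $t_i$ together with pivot-normalized central charges as a chart, and verify chart compatibility from the direct-sum formula for the plumbed central charge of Proposition~\ref{prop:stabplumbing}, where the transition factors are non-vanishing products of the $t_k$. You spell out the dimension count, the period-lattice identification, and the pivot re-normalization more explicitly than the paper does, but these are clarifications of the same argument rather than a different approach.
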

\par
\medskip
\paragraph{\textbf{The orbifold structure}} The following proposition
ensures that the complex structure on $\Tw^s(\calV_\bullet)$-quotients of neighborhoods
actually gives the structure of an orbifold on the quotient space
$\MStab^\circ(\calD^3_{A_n})/\pzAut(\calD^3_{A_n})$.
\par
\begin{prop} \label{prop:finTws}
The stabilizer in $\pzAut(\calD^3_{A_n})$ of a multi-scale stability
condition $\sigma_\bullet=( \calA_\bullet,Z_\bullet)$ contains $\Tw^s(\calV_\bullet)$ as
a finite index subgroup.
\end{prop}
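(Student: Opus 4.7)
I would prove the statement in two parts: the containment $\Tw^s(\calV_\bullet) \subseteq G$, writing $G$ for the stabilizer of $\sigma_\bullet$, and then the finiteness of $G/\Tw^s(\calV_\bullet)$. For containment, extending the computation~\eqref{eq:cjaction}, each generator $\frakc_{i,j}$ acts as the shift $[\wh\kappa_i^{(j)}]$ on the component $\calV_i^{(j)}$ and trivially on the orthogonal components and on the quotient $\calD/\calV_i$. Therefore $\frakc_i$ acts as the uniform shift $[\ell_i]$ on all of $\calV_i$, and hence on every deeper $\calV_{i'} \subseteq \calV_i$ as well. Since an integer shift is realized by the $\bC$-action, the image $(\calA_{i'}[\ell_i], (-1)^{\ell_i} Z_{i'})$ lies in the $\bC$-orbit of $(\calA_{i'}, Z_{i'})$, hence is projectively equivalent to it. Combined with the trivial action on levels $< i$, this gives $\frakc_i \in G$.

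For finite index, any $\Phi \in G$ preserves the filtration $\calV_\bullet$ (since it is determined by the vanishing of $Z_\bullet$) and thus induces $\Phi_i \in \pzAut^\bullet(\calV_i/\calV_{i+1})$. At level $0$, $\Phi_0$ fixes $(\ol{\calA}_0, \ol{Z}_0)$ exactly, so Lemma~\ref{le:propdisc} forces $\Phi_0$ to lie in a finite group (the center acts non-trivially on un-projectivized central charges via shifts). At level $i \geq 1$, Lemma~\ref{le:propdisc} applied to $\calV_i/\calV_{i+1}$ gives that modulo a finite subgroup, $\Phi_i$ has the form $\prod_j \theta_{I(i,j)}^{k_{i,j}}$, acting as the shift $[k_{i,j}\wh\kappa_i^{(j)}]$ on the $j$-th component. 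The crucial observation is that projective equivalence of the multi-scale datum requires $(\ol{\calA}_i, \ol{Z}_i)$ to lie in a single $\bC$-orbit, which means that the image heart must be of the form $\calA_i[m]$ for a uniform integer $m$ rather than a direct sum of components with different shift amounts. This forces $k_{i,j}\wh\kappa_i^{(j)} = m$ independent of $j$, and the least positive $m$ admitting such a solution is precisely $\ell_i$, realized by $\frakc_i$. Thus each $\Phi_i \in \langle \frakc_i \rangle$ modulo a finite subgroup.

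The main obstacle is to combine these per-level statements into a finiteness statement for $\Phi$ itself. The product $\prod_i \frakc_i^{m_i}$ is a well-defined element of $\pzAut^\bullet(\calD^3_{A_n}, \calV_\bullet)$, but $\Phi$ and this product may a priori differ by an autoequivalence whose restriction to every quotient $\calV_i/\calV_{i+1}$ is trivial modulo finite corrections. I would show this discrepancy group is finite by descending induction on the levels, using that the kernel of the restriction map $\pzAut^\bullet(\calD^3_{A_n}, \calV_\bullet) \to \prod_i \pzAut^\bullet(\calV_i/\calV_{i+1})$ is controlled, modulo finite stuff, by the structure of the ambient braid group recalled in Section~\ref{sec:STandBraid} together with the properly discontinuous actions provided by Lemma~\ref{le:propdisc} at each level. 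Collecting the finite corrections then yields $[G : \Tw^s(\calV_\bullet)] < \infty$.
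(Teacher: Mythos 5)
Your Part 1 (containment) is correct and essentially just unwinds Proposition~\ref{prop:nbdhstab}, which is fine. Part 2 has a genuine gap at the step you flag as the ``crucial observation.''

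You assert that projective equivalence of $(\ol{\calA}_i,\ol{Z}_i)$ forces $k_{i,j}\wh\kappa_i^{(j)}=m$ to be \emph{independent of}~$j$, and deduce $\Phi_i\in\langle\frakc_i\rangle$ modulo a finite group. But this coupling fails precisely in the situation the paper singles out: if $\calA_i\cap\calV_i^{(j)}=\calA_{i+1}\cap\calV_{i+1}^{(j')}$ (which happens when $Z_i$ vanishes identically on that component while being non-zero on some other component of $\calV_i$), then $\calV_i^{(j)}$ contributes nothing to the quotient heart $\ol{\calA}_i$, so the projective equivalence constraint at level~$i$ places no condition whatsoever on $k_{i,j}$. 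That exponent is only constrained at the deeper level where the component reappears with a non-zero central charge, and there it is coupled to a \emph{different} set of components. So your per-level assertion ``$\Phi_i\in\langle\frakc_i\rangle$ modulo finite'' is simply false in general, and the group you need to control is not a direct product of cyclic groups level by level. Your acknowledgement of a ``main obstacle'' and the proposed descending induction does not repair this: the induction would have to reconcile constraints that criss-cross levels, and you give no mechanism for that.

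The paper's proof handles exactly this point differently: after passing to a finite-index subgroup $G$ acting trivially on each $(\calV_i,Z_i)$, it embeds $G$ into $\bZ^E$ where $E$ indexes the seams of \emph{all} components $\calV_i^{(j)}$, and then shows the image is cut out by $\sum_i(C_i-1)$ independent linear constraints, where $C_i$ counts only those components that are genuinely ``new'' at level~$i$ (i.e.\ not carried over from level $i+1$). Since $\sum_i C_i=|E|$, this yields $\rk G=L$, matching $\rk\Tw^s(\calV_\bullet)=L$, and finiteness of the index follows. Your argument would go through unchanged if every component of every $\calV_i$ were new at level~$i$ (for instance if $L=1$), but the general case requires the global rank count rather than a per-level one, and this is where your proof has a real hole.
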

\par
\begin{proof}
For the intended finiteness assertion we may restrict attention from
$\pzAut(\calD^3_{A_n})$ to the group of spherical twists $\ST(A_n) \cong B_{n+1}$ thanks
to~\eqref{eq:MCG1structure}. 
\par
Suppose that $L=1$ and suppose moreover that the lower level is connected.
Consider the intersection~$G$ of the stabilizer~$H_{\sigma_\bullet}$ with $\ST(A_n)$. We know that
any $\rho \in G$ stabilizes~$\calV$ and fixes the (lower level) stability condition
on~$\calV$ projectively. This implies as in the proof of Lemma~\ref{le:propdisc} 
that after passing to a finite index subgroup of $G=H_{\sigma_\bullet}\cap\ST(A_n)$
we may assume that $\rho|_\calV$ as an
element of $\Aut^\circ(\calV)$ is central, i.e.,  a power of $\theta_{I,n}$, since
the~$\bC$-action and the action of autoequivalences commute. Consequently
$\langle \frakc_1 \rangle = \Tw^s(\calV_\bullet) \subset G$ of finite index.
\par
Suppose still $L=1$ but now that the lower level has say~$k$ connected
components~$\calV^{(j)}$. Now the preceding argument implies that after passing to
a finite index subgroup~$G$ of the stabilizer (thereby getting rid of potential
non-trivial pointwise stabilizers of $(\calV_1, Z_1)$) any  $\rho|_{\calV^{(j)}}$ for
$\rho \in G$ is central in the autoequivalence group of each component
of~$\calV^{(j)}$, i.e.\ a power of the $\frakc_{1,j}$. We now use that moreover the
elements in~$G$ act by simultaneously rescaling the restrictions of $(\calV_1, Z_1)$
to the components~$\calV^{(j)}$ by definition of the equivalence relation of
multi-scale stability condition. We deduce that~$G$ is a cyclic group. 
Since the exponents in the definition of~$\frakc_1$ were chosen to 
projectivize simultaneously (raise the first equation of~\eqref{eq:cjaction} to
the right power), the claim follows in this case.
\par
For $L>1$ a new phenomenon occurs: Suppose $\calA_i \cap \calV_i^{(j)}
= \calA_{i+1} \cap \calV_{i+1}^{(j')}$ for certain components $\calV_i^{(j)}$
and $\calV_{i+1}^{(j')}$ of the vanishing subcategories at level~$i$ and~$i+1$.
This is possible if $Z_i|_{\calA_i \cap \calV_i^{(j)}} = 0$ and the required non-vanishing
of $Z_i$ is ensured on some other component~$\calV_i^{({k})}$ of~$\calV$.
Then the condition \lq\lq projectively equivalent\rq\rq\ in the definition of a multi-scale
stability condition imposes no constraint relating the action on 
$(\calA_i \cap \calV_i^{(j)},Z_i|_{\calV_i^{(j)}})$ and $(\calA_i \cap \calV_i^{({k})},
Z_i|_{\calV_i^{(k)}})$. We capture this problem as follows:
\par 
We write $G$ for the finite index
subgroup of the stabilizer that acts trivially on each $(\calV_i,Z_i)$.
Let $E$ be the set of (homotopy classes of) seams of the subsurfaces corresponding
to the components $\calV_i^{(j)}$ and identify $\bZ^E$ with the group generated
by the $\theta_{I(i,j),n}$, i.e., the group generated by the twist around these
seams. Then there is a natural embedding $G \to \bZ^E$. The image is contained
for each level~$i$ by $C_i-1$ constraints due to simultaneous projectivization,
where $C_i$ is the number of components where $\calA_i \cap \calV_i^{(j)}
\neq \calA_{i+1} \cap \calV_{i+1}^{(j')}$, i.e., where the seam of the subsurface
at level~$i$ does not agree with the seam of the subsurface at level~$i+1$.
Since $\sum_{i=1}^L C_i = E$ and since these constraints are obviously independent
we conclude that~$G$ is a free group of rank~$L$. Since
$\Tw^s(\calV_\bullet) \subset G$ is also free group of rank~$L$, this must be
an inclusion of finite index.
\end{proof}
\par
The group~$G$ appearing in the last paragraph of the 
proof should be called
the full twist group $\Tw(\calV_\bullet)$ in analogy with the full twist group
of level graphs appearing in \cite[Section~6]{LMS}, see also
Section~\ref{sec:BCGGMforAn}. The factor group $\Tw(\calV_\bullet)/\Tw^s(\calV_\bullet)$
is thus responsible for the orbifold structure of
$\MStab^\circ(\calD^3_{A_n})/\Aut(\calD^3_{A_n})$.
\par

%%%%%%%%%%%%%%%%%%%
\subsection{Compactness}\label{subsec:compact}
%%%%%%%%%%%%%%%%%%%%%

Our goal is:
\par
\begin{theorem} \label{thm:compact}
For any finite index subgroup $G \subset \Aut(\calD^3_{A_n})$ the quotient space
 $\PMStab^\circ(\calD^3_{A_n})/G$ is compact.
\end{theorem}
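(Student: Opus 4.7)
The plan is to show sequential compactness. Since $\PMStab^\circ(\calD^3_{A_n})/G$ is Hausdorff by Theorem~\ref{thm:Hausdorff} and second countable by Lemma~\ref{second_count}, it suffices to prove that every sequence $\{[\sigma_k]\}$ admits a convergent subsequence. First I would lift to representatives $\sigma_k = (\calA_{\bullet,k}, Z_{\bullet,k}) \in \MStab^\circ(\calD^3_{A_n})$ and use the $\bC$-action together with the $G$-action to normalize. A crucial input in the $A_n$-case is that the set of finite hearts of $\calD^3_{A_n}$ modulo $\Aut^\circ(\calD^3_{A_n})$ is finite: under the isomorphism $\EGp(\surfo) \cong \EGp(\DQ)$ from Section~\ref{sec:wDMS}, hearts correspond to triangulations of the $(n+3)$-marked disc, and there are only finitely many $\MCG$-orbits of such triangulations. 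Since $G$ has finite index in $\Aut^\circ(\calD^3_{A_n})$, after passing to a subsequence and acting by elements of $G$, I may assume all top-level hearts equal a fixed finite heart $\calA_0$, normalized by the $\bC$-action so that the largest simple in $\calA_0$ has mass exactly~$1$.

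With the top level supported on a fixed heart, the central charges $Z_{0,k}$ evaluated on the finite set of simples of $\calA_0$ lie in a bounded region of $\Hom(K(\calA_0), \bC)$, so I can extract a convergent subsequence $Z_{0,k} \to Z_\infty$. If $Z_\infty$ sends every nonzero simple of $\calA_0$ into the open upper half-plane, the limit already defines a stability condition on $\calA_0$ and we are done with no lower levels. The essential difficulty, already signalled in Section~\ref{subsec:techniques}, is when $Z_\infty$ sends some simples to the positive real axis, which happens precisely when a stable but non-simple extension has mass tending to zero. To resolve this I would follow the recipe sketched in Section~\ref{subsec:techniques}: rotate $\sigma_k$ by small $\lambda_k \to 0$ with $\Re \lambda_k > 0$ so that the offending simples land in $\HH^-$, and use the explicit finite sequence of simple tilts produced by Lemma~\ref{lemm:constructconvenience} to replace $\calA_0$ by a new heart $\calA_0'$ in which the subcategory generated by the simples on which $Z_\infty$ vanishes is Serre.

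This produces a limiting pair $(\calA_0', Z_\infty')$ satisfying the top-level axioms of a multi-scale stability condition, with vanishing subcategory $\calV_1 \subset \calD^3_{A_n}$ which, by Lemma~\ref{lem:possibleV}, decomposes as an orthogonal sum of $\calD^3_{A_m}$-type categories of strictly smaller total rank. Restricting each $\sigma_k$ to $\calV_1$ and rescaling the restriction of $Z_{0,k}$ so that the largest simple in $\calA_0' \cap \calV_1$ again has mass~$1$ produces a new sequence of projectivized stability conditions to which the same argument applies inductively. After at most $n$ iterations the Grothendieck group is exhausted and the procedure terminates, producing a candidate multi-scale stability condition $[\sigma_\infty]$ of some type $\bfrho$.

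The final step would be to verify convergence in the topology of Section~\ref{subsec:neigh}: the plumbing parameters $\tau_{i,k}$, read off from the ratios of maximum simple masses between consecutive levels, satisfy $|e^{-\pi i \tau_{i,k}}| \to 0$, and the bounded $\bfeps$-sized perturbations of the quotient stability conditions at each level converge by construction to the data defining $[\sigma_\infty]$. The hard part will be coordinating the rotation step across multiple levels: a rotation chosen at one level to tilt into $\HH^-$ generally affects the hearts representing the higher levels. Here the almost-commutativity estimate of Proposition~\ref{prop:plumbassociative} (together with Corollary~\ref{cor:plumbassociative}) is precisely what ensures that the inductive choices produce convergence in the sense of Definition~\ref{def:bdneighborhood}, since the error terms are controlled by $|e^{-\pi i \tau}|$ and the number of indecomposables, the latter being finite in any finite heart of $\calD^3_{A_n}$. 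This finiteness is exactly what fails for the more general $\CY_3$-quiver categories admitting the horizontal degenerations of Section~\ref{subsec:obstructions}, and is the reason the argument is confined to type $A_n$.
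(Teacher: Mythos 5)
Your overall strategy is the same as the paper's: reduce to sequential compactness via second countability, normalize by $G$ to fix a single finite heart (using finiteness of chambers modulo autoequivalences), extract convergent subsequences of normalized central charges via compactness of projective space, and build the limiting filtration level by level, confronting the issue of a limit phase landing on the positive real axis. The one place where you genuinely diverge from the paper is in handling that last issue. The paper observes that, since a finite heart of $\calD^3_{A_n}$ has only finitely many stables, the set $P$ of limiting phases of stables (after passing to a subsequence) is finite; it then rotates the \emph{entire} sequence once by a single fixed $\lambda$ with $1 \notin e^{i\lambda}P$, using Lemma~\ref{le:convafterturn} to ensure convergence is preserved, and reruns the argument with no positive-real problems at any level. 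You instead rotate by a $k$-dependent sequence $\lambda_k \to 0$ and invoke Lemma~\ref{lemm:constructconvenience} to tilt into a convenient heart, which matches the informal recipe of Section~\ref{subsec:techniques} but introduces complications the paper's fixed rotation sidesteps: when $\lambda_k \to 0$ the tilted hearts $\mu^\sharp_{\torsionfree_{\lambda_k}}\calA_0$ need not stabilize as $k \to \infty$, and Lemma~\ref{lemm:constructconvenience} is set up relative to a fixed vanishing subcategory $\calV$, not a vanishing subcategory being discovered in the limit. You flag the need to coordinate rotations across levels as the hard part and appeal to Proposition~\ref{prop:plumbassociative}; the paper avoids this entirely because its single rotation is applied at level~$0$ before any filtration step is taken, so there is nothing to coordinate. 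In short, the core argument agrees, but the paper's one-shot rotation trick buys a substantially cleaner treatment of the problematic-phase case and eliminates the stabilization issue your variant leaves open.
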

\par
\begin{proof}  Since $\PMStab^\circ(\calD^3_{A_n})/G$ is second countable, because $G$
is countable, compactness is
equivalent to being sequentially compact. Given a sequence $\sigma_m$ of stability
conditions we want to extract a convergent sub-sequence after rescaling the family
appropriately. Since the quotient space $\Stab^\circ(\calD^3_{A_n})/G$ has finitely many
chambers (given by undecorated triangulations
of the disc) and since the stability spaces of quotient categories
involved in $\PMStab^\circ(\calD^3_{A_n})/G$ have the same property (corresponding
to partial triangulations, see \cite{BMQS}) we may modify~$\sigma_m$ by suitable
elements of~$G$ and pass to a subsequence and assume that all elements of $\sigma_m$
belongs to a single chamber. We will moreover assume that $\sigma_m = [\calA,Z^{(m)}]
\in \bP\Stab(\calA) \subset \bP\Stab^\circ(\calD^3_{A_n})$
are honest stability conditions. At the end of the proof it will be clear that
the general case follows by the same argument, just using an extra index for the
levels of the initial multi-scale stability conditions.
\par
We define a (weak) full order~$\cgeq$ on $\Sim(\calA)$ by
\bes
S_1 \cgeq S_2 \quad \text{if} \quad \inf_{m \in \bN} |Z^{(m)}(S_1)| / |Z^{(m)}(S_2)| >0\,,
\ees
Equivalently, $S_1$ is strictly smaller than~$S_2$ if the ratio of central charges
tends to zero. Since there are finitely many simples, we may index the level sets
of this order by integers $0,1,\ldots,L$ and use these to generate Serre subcategories of $\calA$.
For consistence of indexing we assume that $\calA_L$ is generated by the set
of smallest simples (with respect to $\cgeq$), that $\calA_{L-1}$ is generated by
$\Sim(A_L)$ and the set of second smallest simples, etc., thus arriving at a nested
sequence
$$\calA_L \subset \calA_{L-1} \subset \cdots \subset \calA_1 \subset \calA_0 = \calA\,.$$
Let's order the simples of~$\calA$ so that $(S_1,\ldots,S_{r_0}) \not\in \Sim(\calA_1)$,
using implicitly the definition  $r_0 = \rk(K(\calA_0)) - \rk(K(\calA_1)) \geq 1$.
Since~$\bP^{n-1}$ is compact, we
may assume after passing to a sub sequence and choosing appropriated representatives
$(\calA, Z^{(m)})$ of the projectivized stability conditions that the sequence
$(Z^{(m)}(S_1),\ldots,  Z^{(m)}(S_n))$ converges, in fact to a point~$Z_0$ where
precisely the first~$r_0$ entries are different from zero by definition of~$\cgeq$.
Since $Z^{(m)}(S_i) \in \ol{\bH}$ we know that $Z_0(S_i) \in \ol{\bH}  \cup \bR_{>0} \cup
\{0\}$. Suppose that $Z_0(S_i) \in \bR_{>0}$ for none of the $S_i$. Then we iterate the
construction: We consider $(Z^{(m)}(S_{r_0+1}),\ldots,  Z^{(m)}(S_n)) \in \bP^{n-r_0-1}$
and use rescaling by real numbers and passage to a subsequence so that this
converges to a point $Z_1$. If again $Z_1(S_i) \in \bR_{>0}$ holds for none of the
$S_i \in \Sim(\calA_i)$ we continue to construct $Z_2, \ldots Z_L$, which we
consider as functions $Z_i: K(\calA_i) \to \bC$. It is now obvious that the tuple
$\sigma := (\calA_\bullet,Z_\bullet)$ is a multi-scale stability condition and
that $\sigma_n \to \sigma$ by definition of the plumbing procedure (in fact
plumbing with $\tau_i \in -i\bR$ purely imaginary suffices as the lower levels just
need to be rescaled appropriately).
\par
Finally we have to deal with the case that $Z_i(S) \in \bR_{>0}$ for some $S$ we excluded so
far. We'd like to rotate by some~$\lambda \in S^1$ and apply
Lemma~\ref{le:convafterturn}. We have to be careful since this rotation has
to be applied to $(\calA,Z^{(m)})$ (not just the central charge) and might
change the heart and alter our basic assumption. To circumvent this problem
we use that for $\calD = \calD^3_{A_n}$ the heart~$\calA$ has only finitely many stables.
Passing to a subsequence we may assume that there are only finitely many
problematic phases $P \in S^1$ that arise as limits of phases $\phi$ such
$\calP_m(\phi) \neq \emptyset$ for the slicing~$\calP_m$ associated with~$\sigma_m$ 
and that (possibly iteratively) tilting at the torsion pairs defined by those slices we 
stay in the same fundamental domain of $\PMStab(\calD^3_{A_n})$ 
with respect to the action of $G$.
Now we just apply the $\C$-action to the initial sequence for some~$\lambda$
such that $1 \not\in e^{i\lambda} P$ and run the argument of the previous paragraph.
\end{proof}
\par
Theorem~\ref{thm:Hausdorff}, Propositions~\ref{prop:complexcharts}
and~\ref{prop:finTws} together with Theorem~\ref{thm:compact}
complete the \emph{proof of Theorem~\ref{intro:main}}.
 %MStab and its topology, 

% !TeX root    = CompactStab.tex
%%%%%%%%%%%%%%%%%%%%%%%%%%%%%%%%%%%%%%%%%%%%%%%%%%%%%%%%%
\section{The BCGGM-compactification and examples}
\label{sec:BCGGMforAn}
%%%%%%%%%%%%%%%%%%%%%%%%%%%%%%%%%%%%%%%%%%%%%%%%%%%%%%%%%

In this section we recall the main features and notions of the smooth
compactification (as orbifold or DM-stack) of the strata of abelian differentials
and quadratic differentials by multi-scale differentials, as constructed
in \cite{LMS} together with \cite{CoMoZa} (see also \cite{CGHMS} for the log
geometry viewpoint on this compactification). We focus on the
case of the stratum $Q_n$ corresponding to the $A_n$-quiver and
denote the multi-scale compactification by~$\ol{Q}_n$ and its projectivization
by $\bP\ol{Q}_n$. At the end of this section we will assert an isomorphism
\be \label{eq:isoofmulti}
\ol{K}_n : \C\backslash \ol{Q}_n/S_{n+1} \,\overset{\cong}{\longrightarrow} \, \bC \backslash \MStab^\circ(\cD^3_{A_n})/
\pzAut^\circ(\cD^3_{A_n})
\ee
of complex orbifolds  and give a sketch of proof.
\par
As complex variety the labeled version of the projectivized stratum~$\bP Q_n$
is isomorphic to the moduli space $\moduli[0,n+2]$ of pointed genus
zero surfaces and as such comes with its Deligne-Mumford compactification
$\barmoduli[0,n+2]$. This kind of compactification of quadratic differential
strata is available only for genus zero differentials. One of the goals
in this section is to explain why~$\bP\ol{Q}_n$ is \emph{not isomorphic}
to $\barmoduli[0,n+2]$. 
\par
\medskip
\paragraph{\textbf{Spaces of quadratic differentials}}
Let~$\w = (w_1,\ldots, w_r)$ be a tuple of integers $\geq -1$ and
let~$\w^- = (w_{r+1}, \ldots, w_{r+b}) $ be a tuple of
integers  $\leq -2$ in the quadratic case. Let $\qmoduli[g,r+b](\w,\w^-)$
be the moduli
space of quadratic differentials~$(X,\bfz,q)$ on a pointed curve $(X,\bfz)$
where $\bfz = (z_1,\ldots,z_{r+b})$ such that~$q$ has signature $(\w,\w^-)$.
In this space the critical points are labeled. The unlabeled version is denoted
by $\qmoduli[g](\w,\w^-)$, i.e., without the subscript. We abbreviate
\bes
Q_n = \qmoduli[0,n+2](1^{n+1},-n-3) \quad \text{and} \quad
Q_{[n]} = \qmoduli[0](1^{n+1},-n-3) = Q_n/S_{n+1}\,.
\ees
All these spaces come with their projectivized versions, the quotient by
the $\bC^*$-action, denoted by a letter~$\bP$ in front. Occasionally
we will compare with spaces of abelian differentials, denoted by
$\omoduli(\w,\w^-)$. For a fixed weighted DMS~$\sow$ as in Section~\ref{sec:BMQS}
we denote by $\FQuad(\sow)$ the \emph{moduli space of framed quadratic
differentials} $(X,\bfz,q,\psi)$ of signature $(\w,\w^-)$ with a (Teichm\"uller)
marking~$\psi$ of the real oriented blowup of~$X$ at the poles by the
surface~$\sow$. (We suppressed $\w^-$ in the notation.)
\par

%%%%%%%%%%%%%%%%%%%%%
\subsection{Enhanced level graphs} \label{sec:enhancedLG}
%%%%%%%%%%%%%%%%%%%%%

Recall (e.g.\ from \cite{acgh2}) that boundary strata of the Deligne-Mumford
compactification $\barmoduli$ are indexed by the \emph{dual graphs} of
the corresponding stable curves.
\par
\medskip
\paragraph{\textbf{Abelian case}} The first datum to characterize points in 
a boundary stratum of the multi-scale compactification of $\omoduli(\w,\w^-)$
is the following. An \emph{enhanced level graph $\wh\Gamma$
(for abelian differentials)} is the dual graph of a pointed stable curve
$(\wh{X},\wh\bfz)$, with a weak
total order on the vertices and a natural number $\kappa_e$, the
\emph{enhancement}, assigned with each edge. In particular $\wh\Gamma$ is
connected, unless specified
otherwise. The weak total order is usually given arranging the
vertices in levels, indexed by non-positive integers, the top level
being level zero. We call an edge \emph{horizontal} if it starts and
ends at the same level, and \emph{vertical} otherwise. We write
$E = E(\wh\Gamma) = E^h \cup E^v$ for this decomposition of the set of edges.
We require that $\wh\kappa_e = 0$ if and only if $e$ is horizontal. 
\par
The enhancement encodes the orders of zeros and poles of the collection
of differentials $\omega_v$ on the pointed stable curve $(\wh{X},\wh\bfz)$.
On the vertex $v \in \wh\Gamma$ the differential~$\omega_v$ is required to
have order~$w_i$, if the $i$-th marked point ($i=1,\ldots, r+b$) is
adjacent to~$v$. At (the node corresponding to) a horizontal edge~$e$ adjacent
to~$v$ the differential has a simple pole and the residues at the two ends
of~$e$ match, i.e., they add up to zero. At the upper end of a vertical edge~$e$
the differential has a zero of order $\wh\kappa_e-1$, at the lower end
a pole of order $-\wh\kappa_e-1$. In particular at each edge the orders
add up to~$-2$. Such a collection of differentials $\bfomega = (\omega_v)$
is called a \emph{twisted differential (of signature~$(\w,\w^-)$) compatible
with~$\wh\Gamma$} if moreover the global residue condition (GRC) from \cite{BCGGM1}
holds. An enhanced level graph comes with a \emph{vertex genus~$g_v$} for each
$v \in V$, defined by the requirement that $2g_v-2$ is the sum of the adjacent
zero and pole orders.
For each signature there is only a finite number of enhanced level graphs
(in particular a finite number of enhancements) for which the space of
twisted differentials on each vertex is non-empty. 
\par
\medskip
\paragraph{\textbf{Quadratic differentials}}
We can view the space of quadratic differentials inside the space
of abelian differentials (via the canonical cover construction) as a subspace
of surfaces with an involution, see e.g.\ \cite{CoMoZa, }.
Due to the involutions only some of the (abelian) enhanced level graphs
appear, encoded as follows. An \emph{enhanced level graph $\Gamma$
(for quadratic differentials)} is the dual graph of a stable curve
$(X,\bfz)$ with a level structure as above and enhancements $\kappa_e$ associated
with the edges $e \in E(\Gamma)$ with the only difference that we now
aim for \emph{twisted quadratic differentials} $\bfq = (q_v)$ compatible
with~$\Gamma$, which  comprises the vanishing according to the signature
$(\w,\w^-)$ at the points of~$\bfz$ and the following three conditions: at horizontal
edges $q_v$ should have a double pole with matching $2$-residues, at
vertical edges the order are $\kappa_e-2$ at the upper end and $-\kappa_e-2$
at the lower end, and the collection~$\bfq$ satisfies the global residue
condition. This condition (see \cite{kdiff} depends on a \emph{double cover
of enhanced level graphs $\wh{\pi}: \wh\Gamma \to \Gamma$}, which is a graph
morphism with the following conditions. Edges with even $\kappa_e$ have
two preimages with enhancement $\wh\kappa_e = \kappa_e/2$. Edges with odd
$\kappa_e$ have one preimage with enhancement $\wh\kappa_e = \kappa_e$.
The preimage of a vertex with an adjacent leg (marked point or edge)
that carries an odd label is a single vertex. The preimage of a vertex without
such an adjacent leg consists of two vertices, if the vertex genus is zero,
and one or two vertices otherwise. Here the vertex genus $g_v$ is defined
by the requirement that $2(2g-2)$ equals the sum of the adjacent zero and
pole orders. (All these conditions are necessary for $\wh\Gamma$ to be an enhanced
level graph compatible with a twisted differential $\bfomega = (\omega_v)$
on a cover, abusively also denoted by $\wh\pi: \wh{X} \to X$ which is on each
vertex~$v$ the canonical cover corresponding to $q_v$ and such that
$\wh\pi^* q_v = \omega_v^2$.) See \cite{kdiff} for an example where the
double cover is not uniquely determined by~$\Gamma$.
Again, for given~$\rho$ the number of enhanced
level graphs that allow a compatible~$\bfq$ is finite. Figure~\ref{cap:23together}
shows the double covers of enhanced level graphs for the boundary divisors where
two resp.\ three simple zeros have come together.
\begin{figure}[ht]
	\bes	
	\begin{tikzpicture}[
	baseline={([yshift=-.5ex]current bounding box.center)},
	scale=2,very thick,
	bend angle=30,
	every loop/.style={very thick}]
	%\node[comp] (T) [] {$1$};
%	\node[circle, draw, inner sep=2pt, minimum size=12pt] (T) [] {$g$}; 
	\node[comp, fill] (T) [] {$g$}; 
	%\node [minimum height=22pt,minimum width=1.5cm,above left] (T-1) at (T.north east) {$-k-1$};
	\node[comp, fill] (T-1) [below=of T] {};
	\path (T) edge [shorten >=4pt] (T-1.center);	
	\node[comp,fill] (B) [below=of T] {}
	edge 
	node [order top right] {$2$}
	node [order middle right] {$4$}
	node [order bottom right] {$-6$} (T);
	\node [text height=12pt,below right] (B-2) at (B.south east) {$1$};
	\path (B) edge [shorten >=4pt] (B-2.center);
	\node [text height=12pt,below left] (B-2) at (B.south west) {$1$};
	\path (B) edge [shorten >=4pt] (B-2.center);
	\end{tikzpicture}
	\quad\,\,\,\, {~\overset{\widehat{\pi}}{\longleftarrow}~}
        \quad	\,\,\,\,  
	\begin{tikzpicture}[	 
	baseline={([yshift=-.5ex]current bounding box.center)},
	scale=2,very thick,
	bend angle=30,
	every loop/.style={very thick}]
	%\node[comp] (T) [] {$1$};
	\node[circle, draw, inner sep=2pt, minimum size=12pt,
	      label=0:{$\widehat{g}-1$}] (T) [] {}; 
	\node[comp, fill] (T-1) [below=of T] {};
	\path (T) edge [white,shorten >=4pt] (T-1.center);	
	\node[comp,fill] (B) [below=of T] {}
	edge [bend left] 
	node [order top left] {$1$} 
	node [order middle left] {$2$}
	node [order bottom left] {$-3$} (T) 
	edge [bend right]		
	node [order top right] {$1$} 
	node [order middle right] {$2$}
	node [order bottom right] {$-3$} 
	(T);
	\node [text height=12pt,below right] (B-2) at (B.south east) {$2$};
	\path (B) edge [shorten >=4pt] (B-2.center);
	\node [text height=12pt,below left] (B-2) at (B.south west) {$2$};
	\path (B) edge [shorten >=4pt] (B-2.center);
	\end{tikzpicture}
        \qquad \qquad
        	\begin{tikzpicture}[
	baseline={([yshift=-.5ex]current bounding box.center)},
	scale=2,very thick,
	bend angle=30,
	every loop/.style={very thick}]
%	\node[circle, draw, inner sep=2pt, minimum size=12pt] (T) [] {$g$}; 
	\node[comp, fill] (T) [] {$g$}; 
	\node[comp, fill] (T-1) [below=of T] {};
	\path (T) edge [shorten >=4pt] (T-1.center);	
	\node[comp,fill] (B) [below=of T] {}
	edge 
	node [order top right] {$3$}
	node [order middle right] {$5$}
	node [order bottom right] {$-7$} (T);
	\node [minimum width=.8cm,text height=10pt,below left] (B-2) at (B.south west) {$1$};
	\path (B) edge [shorten >=6pt] (B-2.center);
	\node [text height=14pt,below] (B-2) at (B.south) {$1$};
	\path (B) edge [shorten >=2pt] (B-2.center);
	\node [minimum width=.8cm,text height=10pt,below right] (B-3) at (B.south east) {$1$};
	\path (B) edge [shorten >=6pt] (B-3.center);	
	\end{tikzpicture}
	\quad {~\overset{\widehat{\pi}}{\longleftarrow}~} \quad	 
	\begin{tikzpicture}[	 
	baseline={([yshift=-.5ex]current bounding box.center)},
	scale=2,very thick,
	bend angle=30,
	every loop/.style={very thick}]
	%\node[comp] (T) [] {$1$};
	\node[circle, draw, inner sep=2pt, minimum size=12pt,
          label=0:{$\widehat{g}-1$}] (T) [] {}; 
	\path (T) edge [shorten >=4pt] (T-1.center);	
	\node[circle, draw, inner sep=0pt, minimum size=12pt] (B) [below=of T] []{$1$}
	edge 		
	node [order top right] {$4$} 
	node [order middle right] {$5$}
	node [order bottom right] {$-6$} 
	(T);
	\node [minimum width=.8cm,text height=10pt,below left] (B-2) at (B.south west) {$2$};
	\path (B) edge [shorten >=6pt] (B-2.center);
	\node [text height=14pt,below] (B-2) at (B.south) {$2$};
	\path (B) edge [shorten >=2pt] (B-2.center);
	\node [minimum width=.8cm,text height=10pt,below right] (B-3) at (B.south east) {$2$};
	\path (B) edge [shorten >=6pt] (B-3.center);
	\end{tikzpicture}	
	\ees 
        \caption{Level graphs of two (left) resp.\ three (right) zeros coming
          together and their double covers.
          Simple zeros and the pole of order $-n-5$ on top level are
          omitted. The boxed numbers are the~$\kappa_e$.}
        \label{cap:23together}
\end{figure}
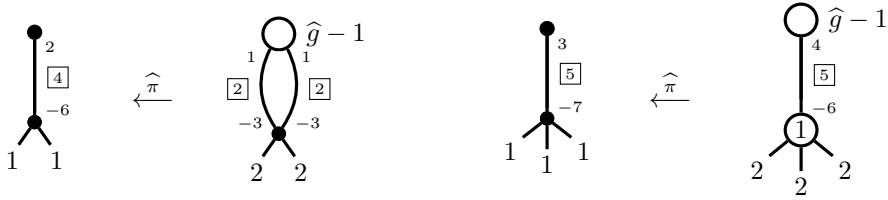
\par
\medskip
%\subsection{Boundary strata of $\ol{Q}_n$}
\paragraph{\textbf{Adjacency of boundary strata}} For an enhanced level graph
$\wh\Gamma$ we denote by $D_{\wh\Gamma}^\circ$ the open boundary stratum of multi-scale
differentials (defined below) compatible with~$\wh\Gamma$. The boundary strata
contained in the closure $D_{\wh\Gamma}$ of $D_{\wh\Gamma}^\circ$ can be described by the
process of \emph{degeneration}, or more easily starting with the converse process of
\emph{undegeneration}. Note that $D_{\wh\Gamma}$ is in general not irreducible:
the connected components of strata of meromorphic differentials that make up the
twisted differential, or more generally components of strata with residue conditions,
are one source that can create irreducible components 
\par
A \emph{horizontal undegeneration} selects a subset~$H$ of the horizontal edges
and contracts them. This results in a morphism~$\delta^H$ of enhanced level graphs.
To define the $i$-th \emph{vertical undegeneration} view the $i$-th level
passage as a line in the level graph just above level~$-i$ and contract all
the edges crossing that level passage. Again this results in a morphism
$\delta_i$ of enhanced level graphs. This can obviously be generalized for
any subset $I = \{i_1,\ldots,i_n\}$ of the set of levels to yield a
graph contraction map~$\delta_I$. These two notions of undegeneration commute
and a general undegeneration is
a composition of the two. A degeneration of level graphs is the inverse procedure.
\par
The complex codimension of a boundary stratum given by a level graph~$\wh\Gamma$
with $L$ levels below zero and $h$ horizontal edges is $h+L$.
\par
\medskip
\paragraph{\textbf{Boundary strata of~$Q_n$}} For these type
of strata the level graphs are strongly constrained.
\par
\begin{lemma} For the spaces $Q_n$ i.e.\ with type $(\w,\w^-) = (1^{n+1}, -n-3)$
the level graphs are trees without horizontal edges and with all vertex genera
$g_v = 0$. In particular for boundary strata of~$Q_n$ the graph~$\Gamma$
determines the double cover~$\wh\Gamma$.
\end{lemma}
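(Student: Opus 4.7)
My plan is to establish the four assertions in sequence, deriving the tree property and $g_v = 0$ from the total genus, using a degree count to rule out horizontal edges, and finally reading off the cover from the local recipe.

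First, I would observe that since the stable curve parameterized by a point in $D_{\widehat\Gamma}^\circ$ has arithmetic genus $g = 0$ and arithmetic genus decomposes as $\sum_v g_v + b_1(\Gamma)$, non-negativity forces $g_v = 0$ for every vertex and $b_1(\Gamma) = 0$. This simultaneously gives the tree property and the vanishing of the vertex genera.

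The core of the argument is the exclusion of horizontal edges, which I would do by contradiction. Assume $e \in E^h(\Gamma)$. Being a tree, $\Gamma$ has $e$ as a bridge, so removing $e$ disconnects $\Gamma$ into two subtrees $\Gamma_1, \Gamma_2$. The signature $(1^{n+1}, -n-3)$ contains exactly one negative entry (the pole), so it is carried by just one of the subtrees, say $\Gamma_1$. The total order $S_2$ of the original markings on $\Gamma_2$ is therefore a sum of positive simple-zero orders, hence $S_2 \geq 0$. Next I would compute $S_2$ a second way from the local degree conditions: at each vertex $v \in \Gamma_2$ the sum of orders equals $4g_v - 4 = -4$, while each internal vertical edge contributes $-4$ to the total (splitting as $(\kappa_e - 2) + (-\kappa_e - 2)$) and the single horizontal half-edge of $e$ on the $\Gamma_2$-side contributes $-2$ (a double pole). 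Using the tree relation $|E(\Gamma_2)| = |V(\Gamma_2)| - 1$, the bulk $-4|V(\Gamma_2)|$ telescopes against the internal edge contributions, leaving $S_2 = -2$. This contradicts $S_2 \geq 0$ and rules out horizontal edges.

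Finally, with $\Gamma$ a tree of genus-zero vertices and no horizontal edges, I would read off $\widehat{\Gamma}$ directly from the recipe recalled in Section~\ref{sec:enhancedLG}: each edge has one or two preimages according to the parity of $\kappa_e$ (with prescribed enhancement), and each genus-zero vertex has a single preimage precisely when it carries an adjacent leg of odd order, and two otherwise. Since all of these data are intrinsic to $\Gamma$ (the enhancements, the markings, the vertex genera), no choice is left and $\widehat\Gamma$ is canonically determined. The main technical point, and what I expect to be the only nontrivial step, is the bookkeeping in the degree count for Step~2; once that yields $S_2 = -2$ the contradiction with the non-negativity from Step~2's first half is immediate.
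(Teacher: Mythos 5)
Your proof is correct, and it is more explicit and self-contained than the paper's own argument. Let me compare the two routes.

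For ruling out horizontal edges, the paper undegenerates to a level graph with exactly one horizontal edge and no vertical edges, then observes that the resulting top-level component not carrying the pole of order $-n-3$ cannot support a quadratic differential with only simple zeros and a single double pole from the node. Your approach does the degree count on the full subtree $\Gamma_2$ directly; the telescoping via $|E(\Gamma_2)| = |V(\Gamma_2)|-1$ recovers exactly the same numerical contradiction $S_2 = -2 < 0$, so the two arguments are at heart the same, with yours spelling out the bookkeeping that the paper compresses via undegeneration. Your Step~1 (arithmetic genus $g = \sum_v g_v + b_1(\Gamma) = 0$ forces trees and $g_v = 0$) is the right reason and the paper treats it as implicit.

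For the last assertion, there is a small gap you should close. You argue that the recipe assigns to each edge and each vertex a prescribed number of preimages (one or two, according to parities), and then conclude that ``no choice is left.'' But the preimage multiplicities alone do not determine the graph $\wh\Gamma$: one must also specify how the lifted half-edges attach to the lifted vertices. The point is that over a tree this gluing \emph{is} determined up to isomorphism (any ambiguity is a relabeling of the two sheets on one side of an edge, and without cycles these relabelings never produce a non-isomorphic cover); equivalently $H^1(\Gamma,\Z/2)=0$, or $\pi_1(\Gamma)=\{e\}$, rules out the ``criss-cross'' configuration of \cite[Example~4.3]{kdiff} that the paper invokes. As written, your conclusion is correct but the justification slightly begs the question; adding a sentence about why the gluing is forced over a tree would make it airtight.
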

\par
\begin{proof} For the statement about horizontal edges, undegenerate all but
one horizontal edge and all levels. Now note that each top level vertex must
have at least one pole of order~$\geq 2$ or positive genus. For the second
statement, the only ambiguity
for~$\wh\Gamma$ given~$\Gamma$ is the \lq\lq criss-cross\rq\rq\ (\cite[Example~4.3]{kdiff}),
which requires $\pi_1(\Gamma) \neq \{e\}$.
\end{proof}
\par

%%%%%%%%%%%%%%%%%%%%%
\subsection{Examples} \label{sec:examplesA2A3}
%%%%%%%%%%%%%%%%%%%%%

We list the boundary components of $\bP\ol{Q}_n$ and their adjacency in
the two examples of lowest complexity.
\par
\medskip
%\begin{example}
\paragraph{\textbf{The $A_2$-quiver}}
The projectivized space $\bP\ol{Q}_2$ is a smooth compactification
of $\moduli[0,4]$. Since the construction  introduces no orbifold structure
in codimension one (see~\cite[Section~6]{LMS}), it agrees with $\barmoduli[0,4]$.
The three boundary points correspond to the two-level graph with one edge, two
vertices, and the pole together with one of the three simple zeros on top level.
The action of~$S_3$ permutes the three boundary points.
\par
\medskip
\paragraph{\textbf{The $A_3$-quiver}}
In this case the projectivized labeled
space $\bP\ol{Q}_3$  is a surface, the largest dimension that can be visualized
on a piece of paper. There are three types of boundary divisors, namely
\bes
D_{1}=\left[
\begin{tikzpicture}[
baseline={([yshift=-.5ex]current bounding box.center)},
scale=2, 
very thick,
bend angle=30]
\node[comp,fill] (T) [] {};
\node [order node dis,above left] (T-1) at (T.north east) {$-8$};
\path[] (T) edge [shorten >=5pt] (T-1.center);
\node [minimum width=18pt,below right] (T-2) at (T.south east) {$1$};
\path (T) edge [shorten >=5pt] (T-2.center);
\node[comp,fill] (B) [below=of T] {}
edge 
node [order bottom left] {$-7$} 
node [order top left] {$3$} (T);
\node [minimum width=18pt,below right] (B-2) at (B.south east) {$1$};
\path (B) edge [shorten >=5pt] (B-2.center);
\node [text height= .45cm,below] (B-2) at (B.south) {$1$};
\path (B) edge [shorten >=3.5pt] (B-2.center);
\node [minimum width=18pt,below left] (B-2) at (B.south west) {$1$};
\path (B) edge [shorten >=5pt] (B-2.center);
\end{tikzpicture}
\right],
\quad 
D_2=\left[
\begin{tikzpicture}[
baseline={([yshift=-.5ex]current bounding box.center)},
scale=2,very thick,
bend angle=30,
every loop/.style={very thick},
%comp/.style={circle,fill,black,,inner sep=0pt,minimum size=5pt},
comp/.style={circle,black,draw,thin,inner sep=0pt,minimum size=5pt,font=\tiny},
order bottom left/.style={pos=.05,left,font=\tiny},
order top left/.style={pos=.9,left,font=\tiny},
order bottom right/.style={pos=.05,right,font=\tiny},
order top right/.style={pos=.9,right,font=\tiny},
order node dis/.style={text width=.75cm}]
\node[comp,fill] (T) [] {};
\path (T) edge [shorten >=5pt] (T-1.center);
\node [order node dis,above left] (T-1) at (T.north east) {$-8$};
\node [minimum width=18pt,below right] (T-2) at (T.south east) {$1$};
\path (T) edge [shorten >=5pt] (T-2.center);
\node [minimum width=18pt,above right] (T-2) at (T.north east) {$1$};
\path (T) edge [shorten >=5pt] (T-2.center);
\node[comp,fill] (B) [below=of T] {}
edge 
node [order bottom left] {$-6$} 
node [order top left] {$2$} (T);
\node [minimum width=18pt,below right] (B-2) at (B.south east) {$1$};
\path (B) edge [shorten >=5pt] (B-2.center);
\node [minimum width=18pt,below left] (B-2) at (B.south west) {$1$};
\path (B) edge [shorten >=5pt] (B-2.center);
\end{tikzpicture}\right],
\quad
D_3=\left[
\begin{tikzpicture}[
baseline={([yshift=-.5ex]current bounding box.center)},
scale=2,very thick,
bend angle=30,
every loop/.style={very thick},
%comp/.style={circle,fill,black,,inner sep=0pt,minimum size=5pt},
comp/.style={circle,black,draw,thin,inner sep=0pt,minimum size=5pt,font=\tiny},
order bottom left/.style={pos=.05,left,font=\tiny},
order top left/.style={pos=.9,left,font=\tiny},
order bottom right/.style={pos=.05,right,font=\tiny},
order top right/.style={pos=.9,right,font=\tiny},
order node dis/.style={text width=.75cm}]
\node[comp,fill] (T) [] {};
\path[draw] (T) edge [shorten >=5pt] (T-1.center);
\node [order node dis,above left] (T-1) at (T.north east) {$-8$};
\node[comp,fill] (B) [below left= 1cm and 0.4cm of T] {}
edge 
node [order bottom left] {$-6$}
node [order top left] {$2$} (T); 
\node[comp,fill] (C) [below right= 1cm and 0.4cm of T] {}
edge 
node [order bottom right] {$-6$}
node [order top right] {$2$} (T);
\node [minimum width=18pt,below right] (B-2) at (B.south east) {$1$};
\path (B) edge [shorten >=5pt] (B-2.center);
\node [minimum width=18pt,below left] (B-2) at (B.south west) {$1$};
\path (B) edge [shorten >=5pt] (B-2.center);
\node [minimum width=18pt,below right] (C-2) at (C.south east) {$1$};
\path (C) edge [shorten >=5pt] (C-2.center);
\node [minimum width=18pt,below left] (C-2) at (C.south west) {$1$};
\path (C) edge [shorten >=5pt] (C-2.center);
\end{tikzpicture}\right]\,.
\ees

The dual graphs are associated with labeled stable curves and this requires
labeling the simple zeros. We distinguish the enhanced graph further by
remembering the one (case $D_1$) or two (case $D_2$) simple zeros on top level,
or the grouping in pairs at the end of each cherry (case $D_3)$, see also
Figure~\ref{fig:A3conf} where each of these boundary divisors occurs.
\par
\begin{figure}
	\includegraphics[scale=.8]{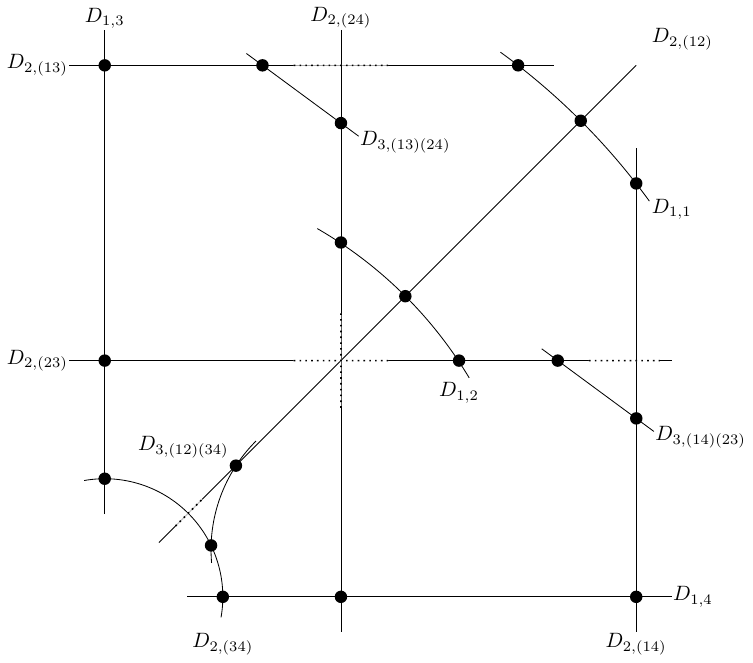}
	\caption{The boundary of the stratum $\bP\ol{Q}_3 =
          \bP \qmoduli[0,5](1^4,-8)$
        } \label{fig:A3conf}
\end{figure}
The codimension two strata are thus given by \lq\lq slanted cherries\rq\rq\ (one
of the lower ends of the level graph~$D_3$ pushed down further to level~$-2$),
which are the intersection point of a $D_3$-divisor and a $D_2$-divisor, and
a chain over three levels, with the pole and one zero on top, one on middle and two
on bottom level, giving an intersection point of $D_1$ and $D_2$. It is
easy to check that the boundary strata are all irreducible here, as depicted
in Figure~\ref{fig:A3conf}.
\par
The action of~$S_4$ is by permutation of the marked zeros and thus on boundary
strata by the natural permutation action on the additional indices of
each boundary divisor type~$D_i$.
\par
\medskip
\paragraph{\textbf{The Deligne-Mumford compactification.}}
For comparison recall that boundary divisors of $\barmoduli[0,5]$ are in
bijection with $2$-element subsets of $\{1,\ldots,5\}$. This shows that
for the $A_3$-quiver the boundary divisors of $\barmoduli[0,n+2]$ are in
bijection with those of type~$D_1$ and~$D_2$. There is a natural forgetful
map $\bP\ol{Q}_3 \to \barmoduli[0,5]$ that contracts the divisors of type~$D_3$.
The existence of \lq\lq cherry shaped\rq\rq\ divisors like~$D_3$ shows that 
$\bP\ol{Q}_n$ is not isomorphic to $\barmoduli[0,n+2]$ for any~$n \geq 3$.
See \cite[Section~7]{CGHMS}  for more on this birational map.
\par

%%%%%%%%%%%%%%%%%%%%%
\subsection{Multi-scale differentials} \label{sec:MultiScale}
%%%%%%%%%%%%%%%%%%%%%

We now give the key definition and explain the remaining terminology subsequently.
%\begin{definition}
A \emph{quadratic multi-scale differential of type} $(\w,\w^-)$ on a stable
pointed curve $(X,\bfz)$ consists of
\begin{itemize}
\item[(i)] an enhanced level structure on the dual graph~$\Gamma$ of~$(X,\bfz)$,
\item[(ii)] a twisted quadratic differential~$\bfq = (q_v)_{v \in V(\Gamma)}$
of type~$(\w,\w^-)$  compatible with the enhanced level structure,
\item[(iii)]  and a prong-matching~$\wp$ for each node of~$X$ joining components
  of non-equal level.
\end{itemize}
Two quadratic multi-scale differential are considered equivalent if
they differ by the action of the level rotation torus.
\par
We make the same definition for \emph{abelian multi-scale differentials},
skipping the word \lq\lq quadratic\rq\rq\ everywhere, replacing $\bfq$ by $\bfomega
= (\omega_v)$ and applying the abelian conventions
for enhanced level graphs. To motivate the notion of \lq\lq prong-matching\rq\rq\
and \lq\lq level rotation torus\rq\rq\ we start with the 
\par
\begin{proof}[Proof of the isomorphism~\eqref{eq:isoofmulti}, Part I]
Our main goal is to define $\ol{K}_n^{-1}$ as a map of sets, starting with
a multi-scale stability
condition. If $\sigma = [\calA_\bullet, Z_\bullet]$ is an honest stability
condition, we associate with it a quadratic differential using the
Bridgeland-Smith isomorphism recalled in Theorem~\ref{thm:BS15_iso}.
\par
Suppose from now on that $\sigma$ is a strict multi-scale stability
condition and suppose that the number of levels below zero is $L=1$,
leaving the bookkeeping for larger~$L$ to the reader. By
Lemma~\ref{lem:possibleV} we may associate with~$\calV = \calV_1^Z$ a type
$\rho = (n_1,\ldots,n_{|J|})$ where~$J$ is an index set for the components
of~$\calV$. We associate with~$\sigma$ the level graph~$\Gamma$ consisting of
a tree with one vertex on top level (carrying the unique pole)
and~$|J|$ vertices on bottom level, each of them carrying $n_j+1$ markings
for simple zeros and enhancement $\kappa_j = n_j+3$.  As part of the
bijectivity claim for the map $\sigma \mapsto (X,\bfz,\Gamma, \bfq, \wp)$
we are about to construct, we observe that all possible level graphs
with~$L=1$ for quadratic differentials of type~$A_n$ arise in this way.
We now apply the isomorphism from Theorem~\ref{thm:KrhoBihol}
(for the quotient category~$\calD/\calV$) to the stability condition
$(\ol{\calA}_0,\ol{Z_0})$ on top level. We get the complex
structure of the
irreducible component of~$(X,\bfz)$ corresponding to the top level vertex~$v_0$
together with the quadratic differential~$q_{v_0}$ on this components.
Similarly we apply this isomorphism (for each $\calV^j)$ to each
stability condition $( \calA_1 \cap \calV^{j}, Z_1|_{\calV^j})$
on lower level to get the complex structure and the quadratic differential
corresponding to the vertices $v_j$ of~$\Gamma$ on lower level. (For $L=1$ there
is no further quotient, so we can as well apply Theorem~\ref{thm:BS15_iso}
on lower level.) The enhancements of~$\Gamma$ were chosen so that the
collection $\bfq = (q_v)_{v \in V(\Gamma)}$ is indeed a twisted differential
compatible with the enhanced level structure. We let~$\bfz$ be the unordered
tuple of zeros and poles (different from the nodes of~$X$) of the various
differentials~$q_v$. (We have no canonical way to label the points~$\bfz$,
and this fits with our target being the $S_{n+1}$-quotient of $\ol{Q}_n$.)
\par
We need to be more precise about automorphisms in the application of
Theorem~\ref{thm:KrhoBihol} (or Bridgeland-Smith) at each level. In fact, above
we were using that this isomorphism is equivariant with respect to the action
of the mapping class group $\MCG(\surf_{\Delta})$ on the domain and of the
group $\mathpzc{Aut}^\circ(\cD)$ on the range (see \cite[Theorem~7.2]{BMQS}).
So far we have given a well-defined map $\sigma \mapsto (X,\bfz,\Gamma,
\bfq)$ with~$\bfz$ considered up to the $S_{n+1}$-action.
\end{proof}
\par
The missing notions will be motivated by making this map bijective thanks
to the prong~$\wp$ and well-defined on equivalence classes. First observe that
the above assignment depended on the stability conditions up to
$G := \mathpzc{Aut}_\lift^\circ(\cD/\cV) \times \prod_{j \in J}
\mathpzc{Aut}(\cV^{j})$. However the group fixing the boundary stratum
of~$\sigma$ is $A:= \mathpzc{Aut}(\cD, \cV)$ and its natural map~$\varphi: A \to G$ is not surjective. In fact each $\mathpzc{Aut}(\cV^{j})$
has an exact sequence~\eqref{eq:MCG1structure}, and the braid groups
$B_{n_j+1}$ for each~$j$ as well as $\mathpzc{Aut}_\lift^\circ(\cD/\cV)$ are
in the image of~$\varphi$, but the product of cokernels (each isomorphic
to $\bZ/(n_j+3)\bZ$) is not hit surjectively.
(Apply~\eqref{eq:MCG1structure} to $\mathpzc{Aut}(\cD, \cV)$ and use that
the cokernel is generated by the shift to prove this.)
As a conclusion the equivalence relation generated by autoequivalences
on multi-scale stability conditions is coarser than what is expected
for Theorem~\ref{thm:KrhoBihol} to be a bijection. We thus need an additional
datum. To motivate the following definition, recall that the shift acts
(via the correspondence to framed quadratic differentials) by cyclically
shifting the marked points at each pole. 
\par
\medskip
\paragraph{\textbf{Prong matchings in the abelian case}}
A \emph{prong} at a zero of order $m$ of an abelian differential is a tangent
vector that coincides with one of its $\kappa = m+1$ outgoing horizontal
directions. A \emph{prong} at a pole of order $|m|$ is a tangent vector
that coincides with one of its $\kappa = |m|-1$ incoming horizontal direction.
(For poles this is the same as choosing one of the marked points in~$M_i$
as defined in Section~\ref{sec:wDMS}.) The prongs are labelled cyclically
(by embedding in the plane) in clockwise order in the case of zeros (resp.\
counterclockwise order in the case of poles). Given an enhanced
level graph~$\wh\Gamma$ a \emph{prong-matching} $\wh\wp = (\wh\wp_e)_{e \in E^v}$
is a bijection of the prongs at the upper and lower even of each edge that
reverses the cyclic order. Consequently, there are $\wh{K}_{\wh\Gamma}
= \prod_{e \in E^v} \wh\kappa_e$ different prong matchings for~$\wh\Gamma$.
\par
\medskip
\paragraph{\textbf{The level rotation torus}} The lower level differential
should be projectivised, since only in this way limits are well-defined
(compare with the proof of Theorem~\ref{thm:compact}) and since only
in this way the $\sigma \mapsto (X,\bfz,\Gamma, \bfq)$ will pass to the
equivalence class of~$\sigma$. However just rescaling the lower level by~$\bC^*$ is
no longer well-defined, as this comprises rotation and changes the horizontal
direction that the notion of prong relies on. There is a finite unramified
cover of~$\bC^*$ (of course: still abstractly isomorphic to~$\bC^*$) that
naturally acts by rotation on the differentials~$\bfomega$ and on~$\wp$
simultaneously so that the preimages of~$1 \in \bC^*$ fix the differential
and permute cyclically each~$\wp_e$. 
This algebraic torus (for general~$\sigma$ isomorphic to~$(\bC^*)^L$)
is called \emph{level rotation torus}, see \cite[Section~6]{LMS} for the
full definition. 
\par
\medskip
\paragraph{\textbf{Prong matchings and level rotation torus
in the quadratic case}} Prongs and their
matchings are defined as in the abelian case, noting that a zero order~$m$
of a quadratic differential has $\kappa_e = m+2$ outgoing horizontal directions
(to be counted on a local square root!) and a pole of order~$|m|$ has
$\kappa_e = |m|-2$ incoming horizontal directions. There are 
${K}_{\Gamma} = \prod_{e \in E^v} \kappa_e$ prong matchings.
\par
To understand the action of the level rotation torus, the easiest
way is to pass to the canonical cover and use that a prong-matching
of the quadratic differential induces a prong-matching of an abelian differential.
Now the equivalence relation given by the level rotation torus is
just defined as in the abelian case, restricted to those abelian
multi-scale differentials that actually arise as double covers.
See \cite[Section~7]{CMS} for full details.
\par
\begin{proof}[Proof of the isomorphism~\eqref{eq:isoofmulti}, Part II]
Finally we show how to associate with~$\sigma$ a prong-matching~$\wp$. We
continue with the setting above, in particular $L=1$. Consider
the ray obtained by plumbing  $(it) * \sigma \in \Stab^\circ(\cD_{A_n})$ with
a purely imaginary parameter, i.e., without rotation. The limit
$t \to \infty$ of the Bridgeland-Smith preimages $K^{-1}(it * \sigma)$ is a
multi-scale differential with underlying $(X,\bfz,\Gamma,\bfq)$ as
above, by definition of plumbing and of the topology on $\ol{Q}_n$.
It thus comes with a prong-matching~$\wp$  and we now set
$\ol{K}_n^{-1}(\sigma) = (X,\bfz,\Gamma, \bfq, \wp)$. (We remark that this~$\wp$
is the only choice if we want $\ol{K}_n^{-1}$ to be continuous. Formally,
in the language of \cite[Section~7]{LMS} this is the only prong-matching
so that the comparison diffeomorphisms between the welding of the limiting
stable curve and the nearby plumbed curves is almost turning-number preserving.
Informally, when $|J| \geq 2$ the right choice of~$\wp$ differs from a wrong
choice of~$\wp'$ by rotating say one prong for the subsurface $j=1$ on lower level.
With the wrong~$\wp'$ the turning numbers near the subsurface
$j=1$ do not work out. By rotating the whole lower level using the
$\bC$-action, turning numbers can be fixed for $j=1$, but then since~$|J| \geq 2$
the turning numbers will not work out at some other subsurface of lower level.)
\par
To see that~$\ol{K}_n^{-1}$ is bijective note that both the initial failure
(due to the cokernel of~$\varphi:A \to G$) and the additional datum~$\wp$ capture
the possibility of rotating a lower level component independently of 
other components. We leave the details to the reader.
\par
To show continuity (and well-definedness mod $\pzAut^\circ(\cD^3_{A_n})$)
it is best to first lift the map~$\ol{K}_n^{-1}$ to a map from $\MStab^\circ
(\calD^3_{A_n})$
to the Teichmüller-framed version of~$\ol{Q}_n$, the \emph{augmented Teichmüller
space} in the sense of \cite[Section~7]{LMS}. This is a bordification of
$\FQuad(\sow)$ on which the mapping class group acts. One now needs to check
that this lifted~$\ol{K}_n^{-1}$ is a homeomorphism using the respective
definition of topologies and the equivariance with respect to the
mapping class group and $\pzAut^\circ(\cD^3_{A_n})$-action.
\par
To show compatibility with the complex structure one
needs to recall that the complex structure on~$\ol{Q}_n$ is defined
using plumbing (in the sense of complex geometry). This gives a collection
of periods that defines local coordinates (the perturbed period coordinates
in \cite[Section~9]{LMS}, in fact no modification of the differential is needed
for $A_n$-type since all the residues are zero) and one only needs to check that
they correspond to the coordinates defined in Section~\ref{subsec:complexstr}.
We leave again the details to the reader.
\end{proof}
\par

%%%%%%%%%%%%%%%%%%%%%
\subsection{Why taking $\pzAut^\circ(\cD^3_{A_n})$-quotients?
The $A_2$-quiver revisited} \label{sec:A2revisited}
%%%%%%%%%%%%%%%%%%%%%
In this subsection we revisit
$\PMStab(\DQQ[3][A_2])$ to show that prior to taking the
$\pzAut^\circ(\cD^3_{A_n})$-quotient is neither a compact space (contrary
to the Thurston-type compactifications in \cite{BDL_Thurston}) nor carries a
complex structure.
\par
In fact $\PMStab(\DQQ[3][A_2])$ coincides with the upper half plane
with cusps $\wt{\bH} = \bH \cup \bP^1_\bQ$ provided with the horoball
topology where a neighborhood basis of $\infty$ consists of the
sets $U_C = \{\tau: \Im(\tau) > C\}$ and a neighborhood basis of $z \in \bQ$
are the images of~$U_C$ under a Möbius transformation mapping~$\infty$ to~$z$. Here $\bP^1_\bQ=\bQ\cup\{\infty\}$ and it is known that $\bP\Stab(\calD^3_{A_2})\simeq \bH$, see e.g., \cite{sutherland}.
\par
To prove this we start with a classification of the boundary strata. In
this case necessarily $L=1$. Since the Grothendieck group of a vanishing subcategory~$\calV$ of $\calD:=\calD^3_{A_2}$ has rank~$1$, all
stability conditions on it are projectively equivalent. 
\par
Next we list the possible $\calV$. Recall that a heart 
suporting a stability condition on the space 
$\stab(\calD)/\sph(\calD)$ can be identified 
with one of the following: the standard heart $\cl H_0=\bra S_1,S_2\ket$ or its shift $\cl H_0[1]$, or $\bra S_1[1], E\ket$, $\bra S_1,S_2[1]\ket$, $\bra S_2, E[1]\ket$, where $S_2\to E\to S_1$ is a short exact sequence. In fact, the vanishing category $\calA_1$ arising from one of the hearts above is generated by one of the indecomposables $S_1,S_2,E$ of $\cl H_0$, those appearing in Figure~\ref{fig:A2rotate}. Therefore in $\stab(\calD)$, 
we associate with any such $\calV$ the
image of a generating simple in $\bP^1(K(\calD)) \cong \bP^1_\bQ$ and call
this map~$c$. We let $\cl H_0$ be the standard
heart of the $A_2$-quiver and let 
$\calV_1 = \bra S_2 \ket$, corresponding to
$c(\calV_1) = \bigl(\begin{smallmatrix} 0 \\ 1 \end{smallmatrix} \bigr)
\in K(\calD)$. This subcategory obviously corresponds to any central charge
with $Z_0(S_1) \in \pm\bH$ and $Z_0(S_2) = 0$. 
\par
We first consider the action of the Seidel-Thomas group $\sph(\calD) \cong
B_3$ on $\cl H_0$ and~$\calV_1$. The element $\tau_2$ stabilizes~$\calV_1$.
The generator of center~$\theta_2$ of~$B_3$ acts by the shift by $[\pm 5]$ and
thus trivially on $\calV_1$. Given that $B_3/\bra\theta_2\ket\simeq \PSL_2(\bZ)$ \cite{farbmarg} and that $\tau_2$ acts as $\bigl(\begin{smallmatrix} 1 & 1 \\ 0& 1
  \end{smallmatrix} \bigr)$ on $K(\calD^3_{A_2})$, the orbits of the action
$\sph(\calD)$ on $\calV_1$ are in bijection (of cosets) with
\be
B_3 / \bra \theta_2, \tau_2 \ket \cong \PSL_2(\bZ) / \bra
\bigl(\begin{smallmatrix} 1 & 1 \\ 0 & 1 \end{smallmatrix} \bigr)\ket 
\cong \bP^1_\bQ 
\ee
\par
The quotient $\Aut(\calD)/\sph(\calD)$ is generated by~$[-1]$, which also
acts trivially on any~$\calV$. To summarize, the orbit $\Aut(\calD) \cdot
\calV_1$ is  in natural bijection with $\bP^1_\bQ$ via the map~$c$. The resulting space $\PMStab(\calD)/\Aut(\calD)$ is the compact orbifold $\widetilde{\H}/\PSL_2(\bZ)$.
\par
%BCGGM-compactification in the case of the A_n quiver

\printbibliography

\end{document}